\theoremstyle{thmstyleone}%
\newtheorem{theorem}{Theorem}
\newtheorem{corollary}{Corollary}
\newtheorem{lemma}{Lemma}
\newtheorem{proposition}{Proposition}%
\theoremstyle{thmstyletwo}%
\newtheorem{example}{Example}%
\newtheorem{remark}{Remark}%
\theoremstyle{thmstylethree}%
\begin{document}
	
	\title[Multiwindow Gabor systems]{Multi-window Gabor systems on discrete periodic sets}
	
	
	\author*[]{\fnm{Najib} \sur{Khachiaa}}\email{khachiaa.najib@uit.ac.ma}

	\affil[]{\orgdiv{Laboratory Partial Differential Equations, Spectral Algebra and Geometry, Department of Mathematics}, \orgname{Faculty of Sciences, University Ibn Tofail}, \orgaddress{\city{Kenitra}, \country{Morocco}}}

		\abstract{In this paper, we study multiwindow discrete Gabor $(M-D-G)$ systems \\$\mathcal{G}(g,L,M,N)$ on discrete periodic sets $\mathbb{S}$ and give some necessary and/or sufficient matrix-conditions for a $M-D-G$ system in $\ell^2(\mathbb{S})$ to be a frame. We characterize, also, which $M-D-G$ frames for $\ell^2(\mathbb{S})$ are Riesz bases by the parameters $L$, $M$ and $N$. Matrix-characterizations of  $M-D-G$ Parseval frames and $M-D-G$ orthonormal bases for $\ell^2(\mathbb{S})$  are  also given. Then, in the case of $\mathbb{S}=\mathbb{Z}$, we characterize  the existence of $M-D-G$ frames, $M-D-G$ Parseval frames, $M-D-G$ Riesz bases and $M-D-G$ orthonormal bases for $\ell^2(\mathbb{Z})$ by the parameters $M$, $N$ and $L$. We present, also, a matrix-characterization of dual $M-D-G$ frames for $\ell^2(\mathbb{S})$. A perturbation matrix-condition of $M-D-G$ frames for $\ell^2(\mathbb{S})$ is also prsented. We, then, show that a pair of $M-D-G$ Bessel systems can generate pairs of  M-D-G dual frames in the case of $\mathbb{S}=\mathbb{Z}$. By the Zak-transform, characterizations  of complete M-D-G systems and  M-D-G frames for $\ell^2(\mathbb{Z})$  are given in the case of $M=N$ and necessary conditions for a M-D-G system to be a Riesz basis/ orthonormal basis for $\ell^2(\mathbb{Z})$ are also given. We, also, study M-D-G  $K$-frames in $\ell^2(\mathbb{S})$, where $K\in B(\ell^2(\mathbb{S})\,)$, and presente some sufficient matrix-conditions for a M-D-G system to form a $K$-frame and give a construction method of M-D-G $K$-frames which are not M-D-G frames and  some examples.}
	
	\keywords{Multi-window Discrete Gabor Frame, Discrete Periodic Set, Discrete Zak-transform.}

	\pacs[MSC Classification]{42C15; 42C40.}
	
	\maketitle
	\section{Introduction and preliminaries}
	When a signal appears periodically but intermittently, it can be considered within the entire space $\ell^2(\mathbb{Z})$ and analyzed in the standard manner. However, if the signal is only emitted for short periods, this method might not be the best approach. To perform Gabor analysis of the signal most efficiently while preserving all its features, Li and Lian studied single window Gabor systems on discrete periodic sets. They derived density results and frame characterizations. Compared to single window Gabor systems, multiwindow Gabor systems can be both interesting and beneficial, as they allow for more flexibility by using windows of different types and widths. Q-F. Lian, J. Gong, and M-H. You explored these multiwindow Gabor systems, achieving frame characterizations and notable results. 
	
	A sequence $\{f_i\}_{i\in \mathcal{I}}$, where $\mathcal{I}$ is a countable set, in a separable Hilbert space $H$ is said to be frame if there exist $0< A\leq B<\infty$ ( called frame bounds) such that for all $f\in H$, $$A\|f\|^2\leq \displaystyle{\sum_{i\in \mathcal{I}}\vert \langle f,f_i \rangle\vert^2}\leq B\|f\|^2.$$ If only the upper inequality holds,  $\{f_i\}_{i\in \mathcal{I}}$ is called  a Bessel sequence with Bessel bound $B$. If $A=B$, the sequence is called a tight frame and if $A=B=1$, it is called a Parseval frame for $H$. If two frames $\{f_i\}_{i\in \mathcal{I}}$ and $\{g_i\}_{i\in \mathcal{I}}$ in $H$ satisfy for all $f\in H$, 
	$$f=\displaystyle{\sum_{i\in \mathcal{I}}\langle f, g_i\rangle f_i}.$$
We say that $\{g_i\}_{i\in \mathcal{I}}$ is a dual frame of $\{f_i\}_{i\in \mathcal{I}}$. It is well known that the dual frames can exchange with each other. Then we can, simply, say that $\{g_i\}_{i\in \mathcal{I}}$ and $\{f_i\}_{i\in \mathcal{I}}$ are dual frames in $H$. If $\{f_i\}_{i\in \mathcal{I}}$ is a Bessel sequence in $H$, then the pre-frame (synthesis) operator $U$ defined as follows,
$$\begin{array}{rcl}
U:\ell^2(\mathcal{I})&\longrightarrow& H\\
\{a_i\}_{i \in \mathcal{I}}&\mapsto& \displaystyle{\sum_{i\in \mathcal{I}}a_i f_{i}},
\end{array}$$
is well defined, linear and bounded operator. Denote by $\theta$ the adjoint of $U$, $\theta$ is called the analysis (transform) operator and is defined, explicitely, for all $f\in H$ by $\theta(f):=\{\langle f,f_i\rangle\}_{i\in \mathcal{I}}$. The frame operator, often denoted by $S$, is the composite $U\theta$ and is defined, explicitely, for all $f\in H$ by $S(f)=\displaystyle{\sum_{i\in \mathcal{I}}\langle f, f_i\rangle f_i}.$ If $\{f_i\}_{i\in \mathcal{I}}$ is a frame, then $S$ is a self-adjoint, positive and invertible operator. In this case, $\{f_i\}_{i\in \mathcal{I}}$ and $\{S^{-1}f_i\}_{i\in \mathcal{I}}$ are dual frames. For more details on frame theory, the reader can refer to \cite{1}.

Denote by $\mathbb{N}$ the set of positive integers, i.e. $\mathbb{N}:=\{1,2,3,...\}$ and for a given $K\in \mathbb{N}$, write $\mathbb{N}_{K}:=\{0,1,...,K-1\}$. Let $N,M,L\in \mathbb{N}$. A nonempty subset $\mathbb{S}$ of $\mathbb{Z}$ is said to be $N\mathbb{Z}$-periodic set if for all $j\in \mathbb{S}$ and for all $n\in \mathbb{Z}$, $j+nN \in \mathbb{S}$. For $K\in \mathbb{N}$, write $\mathbb{S}_K:=\mathbb{S}\cap \mathbb{N}_K$. We denote by $\ell^2(\mathbb{S})$ the closed subspace of $\ell^2(\mathbb{Z})$ defined by, $$\ell^2(\mathbb{S}):=\{f\in \ell^2(\mathbb{Z}):\, f(j)=0 \text{ if } j\notin  \mathbb{S} \}.$$ 
And by $\ell_0(\mathbb{S})$ the dense subspace in $\ell^2(\mathbb{S})$ defined by,  $$\ell_0(\mathbb{S}):=\{f\in \ell^2(\mathbb{S}):\; supp(f) \text{ is finite}\}.$$
Define the modulation operator $E_{\frac{m}{M}}$ with $m\in \mathbb{Z}$ and the translation operator $T_{nN}$ with $n\in \mathbb{Z}$ for $f\in \ell^2(\mathbb{S})$ by $$E_{\frac{m}{M}}f(.):=e^{2\pi i \frac{m}{M}.} f(.), \;\; T_{nN}f(.):=f(.-nN).$$ 
The modulation and translation  operators are unitary operators of $\ell^2(\mathbb{S})$.\\ For $g:=\{g_l\}_{l\in \mathbb{N}_L} \subset \ell^2(\mathbb{S})$, the associated multiwindow discrete Gabor system (M-D-G) is given by,
$$\mathcal{G}(g,L,M,N):=\{E_{\frac{m}{M}}T_{nN}g_l\}_{m\in \mathbb{N}_M,n\in \mathbb{Z}, l\in \mathbb{N}_L}.$$ 
For $j\in \mathbb{Z}$, we denote by $B_j:=\{k\in \mathbb{Z}:\, j+kM\in \mathbb{S}\}$ and by, \\$J(j):=diag(...,\chi_{B_j}(-1),\chi_{B_j}(0),\chi_{B_j}(1),...)$. We associate $\phi \in \ell^2(\mathbb{S})$ with a bi-infinite matrix-valued $\mathcal{M}_{\phi}$ defined for each $j\in \mathbb{Z}$ by $\mathcal{M}_{\phi}(j):=\{\phi(j+kM-nN)\}_{k,n\in \mathbb{Z}}$. In what follows, $\mathcal{M}_{\phi}^*(.)$ is the conjugate transpose of $\mathcal{M}_{\phi}(.)$. Q-F. Lian, J. Gong and M-H. You studied  M-D-G frames and presented characterizations of M-D-G frames $\mathcal{G}(g,L,M,N)$  by $J(.)$ and $\mathcal{M}_{g_l}(.)$. 

\begin{theorem}\label{thm1} \cite{8}
If $\mathcal{G}(g,L,M,N)$ is M-D-G frame for $\ell^2(\mathbb{S})$ with frame bounds $A$ and $B$, then for $j\in \mathbb{S}$,
$$\displaystyle{\frac{A}{M}}\leq \displaystyle{\sum_{l\in \mathbb{N}_L} \left( \mathcal{M}_{g_l}(j)\mathcal{M}_{g_l}^*(j) \right)_{0,0}}\leq \displaystyle{\frac{B}{M}}.$$
\end{theorem}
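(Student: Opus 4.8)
The plan is to feed the frame inequality the single Dirac mass sitting at the index $j$. First I would unwind the matrix product appearing in the statement. Since $\mathcal{M}_{g_l}(j)=\{g_l(j+kM-nN)\}_{k,n\in\mathbb{Z}}$, the $(k,k')$ entry of $\mathcal{M}_{g_l}(j)\mathcal{M}_{g_l}^*(j)$ equals $\sum_{n\in\mathbb{Z}}g_l(j+kM-nN)\overline{g_l(j+k'M-nN)}$, so in particular
$$\big(\mathcal{M}_{g_l}(j)\mathcal{M}_{g_l}^*(j)\big)_{0,0}=\sum_{n\in\mathbb{Z}}|g_l(j-nN)|^2 .$$
Hence the asserted inequality is equivalent to $\dfrac{A}{M}\le\displaystyle\sum_{l\in\mathbb{N}_L}\sum_{n\in\mathbb{Z}}|g_l(j-nN)|^2\le\dfrac{B}{M}$ for every $j\in\mathbb{S}$, and this is what I would establish.

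Next, fix $j\in\mathbb{S}$ and let $f$ be the sequence equal to $1$ at $j$ and $0$ elsewhere. Because $j\in\mathbb{S}$, this $f$ belongs to $\ell^2(\mathbb{S})$ and $\|f\|^2=1$, so it is an admissible test vector. I would then compute the inner products: for $m\in\mathbb{N}_M$, $n\in\mathbb{Z}$, $l\in\mathbb{N}_L$,
$$\langle f, E_{\frac{m}{M}}T_{nN}g_l\rangle=\overline{e^{2\pi i\frac{m}{M}j}\,g_l(j-nN)},\qquad\text{so}\qquad \big|\langle f, E_{\frac{m}{M}}T_{nN}g_l\rangle\big|^2=|g_l(j-nN)|^2 ,$$
which is independent of $m$. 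Summing over the $M$ values $m\in\mathbb{N}_M$ therefore contributes a factor $M$, and then
$$\sum_{l\in\mathbb{N}_L}\sum_{n\in\mathbb{Z}}\sum_{m\in\mathbb{N}_M}\big|\langle f, E_{\frac{m}{M}}T_{nN}g_l\rangle\big|^2=M\sum_{l\in\mathbb{N}_L}\sum_{n\in\mathbb{Z}}|g_l(j-nN)|^2 .$$

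Finally I would substitute this identity into the frame inequality $A\|f\|^2\le\sum_{m,n,l}|\langle f,E_{\frac{m}{M}}T_{nN}g_l\rangle|^2\le B\|f\|^2$ with $\|f\|^2=1$ and divide by $M$ to obtain the claimed two-sided bound. I do not expect any genuine obstacle: the only points deserving a word of justification are that the Dirac mass at $j$ really lies in $\ell^2(\mathbb{S})$ — which is precisely the hypothesis $j\in\mathbb{S}$ — and that rearranging the sums over $m$, $n$, $l$ is legitimate, which holds because every term is nonnegative and the total is finite by the upper (Bessel) bound.
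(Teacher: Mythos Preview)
Your proof is correct. Note however that the paper does not supply its own proof of this statement: it is quoted from \cite{8} as a known result, so there is no in-paper argument to compare against. Your direct computation with the Dirac mass $f=\chi_{\{j\}}$ is exactly the natural approach, and it is consistent with the paper's Lemma~\ref{lem4}: applying that lemma to $f=\chi_{\{j\}}\in\ell_0(\mathbb{S})$ gives $F_2(f)=0$ (since $f(j)\overline{f(j+kM)}=0$ for $k\neq 0$) and $F_1(f)=M\sum_{l\in\mathbb{N}_L}\bigl(\mathcal{M}_{g_l}(j)\mathcal{M}_{g_l}^*(j)\bigr)_{0,0}$, recovering your identity.
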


\begin{theorem}\label{thm2}\cite{8}
If $B:=M \displaystyle{\max_{j\in \mathbb{S}\cap \mathbb{N}_N} \sum_{l\in \mathbb{N}_L}\sum_{k\in \mathbb{Z}}\left\vert \left(\mathcal{M}_{g_l}(j)\mathcal{M}_{g_l}(j) \right)_{0,k}\right\vert}< \infty$, \\
then $\mathcal{G}(g,L,M,N)$ is a Bessel sequence with bessel bound $B$.\\
If also $A:=M\displaystyle{\min_{j\in \mathbb{S}\cap \mathbb{N}_N }\sum_{l\in \mathbb{N}_N} \left[ \mathcal{M}_{g_l}(j)\mathcal{M}_{g_l}^*(j)_{0,0}-\sum_{k\in \mathbb{Z}-\{0\}}\left \vert \left(\mathcal{M}_{g_l}(j)\mathcal{M}_{g_l}^*(j)\right)_{0,k}\right\vert \right]}> 0$, \\
then  $\mathcal{G}(g,L,M,N)$ is a frame for $\ell^2(\mathbb{S})$ with frame bounds $A$ and $B$.
\end{theorem}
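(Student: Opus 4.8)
The plan is to evaluate the frame-type sum on the dense subspace $\ell_0(\mathbb{S})$, fiberize it over the cosets $p+M\mathbb{Z}$ ($p\in\mathbb{N}_M$), and reduce it to uniform estimates for a family of self-adjoint matrix quadratic forms. First I would fix $f\in\ell_0(\mathbb{S})$ and, writing $j=p+kM$ with $p\in\mathbb{N}_M$, $k\in\mathbb{Z}$, observe that $\langle f,E_{\frac{m}{M}}T_{nN}g_l\rangle=\sum_{p\in\mathbb{N}_M}e^{-2\pi i\frac{m}{M}p}F_{n,l}(p)$, where $F_{n,l}(p):=\sum_{k\in\mathbb{Z}}f(p+kM)\overline{g_l(p+kM-nN)}$, and that this is precisely the size-$M$ discrete Fourier transform of $p\mapsto F_{n,l}(p)$. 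Applying the Plancherel identity for this finite Fourier transform in the variable $m\in\mathbb{N}_M$ (all rearrangements being legitimate since $f$ has finite support) turns the sum over $m,n,l$ into $M\sum_{p\in\mathbb{N}_M}\sum_{l\in\mathbb{N}_L}\sum_{n\in\mathbb{Z}}|F_{n,l}(p)|^2$.

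Next I would package each fiber into a vector $\mathbf{f}_p:=\{f(p+kM)\}_{k\in\mathbb{Z}}\in\ell^2(\mathbb{Z})$; since $\operatorname{supp}f\subset\mathbb{S}$ one has $\mathbf{f}_p=J(p)\mathbf{f}_p$, and $\sum_{p\in\mathbb{N}_M}\|\mathbf{f}_p\|^2=\|f\|^2$. Reading off the definition of $\mathcal{M}_{g_l}(p)$ one recognizes $F_{n,l}(p)=\bigl(\mathcal{M}_{g_l}^*(p)\mathbf{f}_p\bigr)_n$, hence $\sum_{n\in\mathbb{Z}}|F_{n,l}(p)|^2=\bigl\langle\mathcal{M}_{g_l}(p)\mathcal{M}_{g_l}^*(p)\mathbf{f}_p,\mathbf{f}_p\bigr\rangle$, so that, with $A_p:=\sum_{l\in\mathbb{N}_L}\mathcal{M}_{g_l}(p)\mathcal{M}_{g_l}^*(p)$ (self-adjoint, and bounded on $\ell^2(\mathbb{Z})$ under the Bessel hypothesis),
$$\sum_{m,n,l}\bigl|\langle f,E_{\frac{m}{M}}T_{nN}g_l\rangle\bigr|^2=M\sum_{p\in\mathbb{N}_M}\bigl\langle J(p)A_pJ(p)\mathbf{f}_p,\mathbf{f}_p\bigr\rangle .$$
It then suffices to estimate the self-adjoint forms $\langle J(p)A_pJ(p)c,c\rangle$ uniformly in $p$. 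For the upper (Bessel) bound I would use the Schur-type fact that the absolute row sums control the norm of a self-adjoint matrix, which gives $\|J(p)A_pJ(p)\|\le\sup_{k\in B_p}\sum_{k'\in\mathbb{Z}}|(A_p)_{k,k'}|$. For the lower (frame) bound I would use the diagonal-dominance estimate $\langle J(p)A_pJ(p)c,c\rangle\ge\bigl(\inf_{k\in B_p}\bigl[(A_p)_{k,k}-\sum_{k'\neq k}|(A_p)_{k,k'}|\bigr]\bigr)\|c\|^2$, which follows from $2|c_k||c_{k'}|\le|c_k|^2+|c_{k'}|^2$ and $(A_p)_{k,k'}=\overline{(A_p)_{k',k}}$.

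The remaining ingredient is the translation covariance of the matrices: from the definition, $\mathcal{M}_{g_l}(j+N)$ is a column shift of $\mathcal{M}_{g_l}(j)$ and $\mathcal{M}_{g_l}(j+M)$ is a row shift, so $A_{j+N}=A_j$ while $A_{j+M}$ equals $A_j$ conjugated by a shift; consequently the $k$-th absolute row sum (resp. the $k$-th diagonal-dominance quantity) of $A_j$ equals the row-$0$ quantity of $A_{j+kM}$, while $k\in B_j\iff j+kM\in\mathbb{S}$. Since $\mathbb{S}$ is $N\mathbb{Z}$-periodic and $\bigcup_{p\in\mathbb{N}_M}\bigl((p+M\mathbb{Z})\cap\mathbb{S}\bigr)=\mathbb{S}$, these observations identify $\sup_{p\in\mathbb{N}_M}\sup_{k\in B_p}\sum_{k'}|(A_p)_{k,k'}|$ with $\max_{j\in\mathbb{S}\cap\mathbb{N}_N}\sum_{l\in\mathbb{N}_L}\sum_{k\in\mathbb{Z}}\bigl|\bigl(\mathcal{M}_{g_l}(j)\mathcal{M}_{g_l}^*(j)\bigr)_{0,k}\bigr|=B/M$, and similarly $\inf_{p\in\mathbb{N}_M}\inf_{k\in B_p}\bigl[(A_p)_{k,k}-\sum_{k'\neq k}|(A_p)_{k,k'}|\bigr]$ with $A/M$. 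Substituting into the displayed identity yields $A\|f\|^2\le\sum_{m,n,l}|\langle f,E_{\frac{m}{M}}T_{nN}g_l\rangle|^2\le B\|f\|^2$ for all $f\in\ell_0(\mathbb{S})$; the Bessel inequality then extends to all of $\ell^2(\mathbb{S})$ by density, and, $f\mapsto\sum_{m,n,l}|\langle f,E_{\frac{m}{M}}T_{nN}g_l\rangle|^2$ being continuous thereafter, the lower inequality extends as well.

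The analytic content — the finite Plancherel identity and the two elementary matrix estimates — is routine. The step I expect to be the main obstacle is the covariance bookkeeping in the last paragraph: one must check carefully that the relations $A_{j+N}=A_j$ and $A_{j+M}=(\text{shift})A_j(\text{shift})^*$ mesh correctly with the support sets $B_j$ and with the $N\mathbb{Z}$-periodicity of $\mathbb{S}$, so that the maxima and minima over the finite index set $\mathbb{S}\cap\mathbb{N}_N$ of the row-$0$ quantities genuinely control the corresponding quantities over all fibers $p\in\mathbb{N}_M$ and all rows $k\in B_p$.
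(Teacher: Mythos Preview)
Your argument is correct. Note that the paper does not itself prove this theorem (it is quoted from \cite{8}), but the technique it deploys elsewhere---Lemma~\ref{lem4} to split $\sum_{m,n,l}|\langle f,E_{\frac{m}{M}}T_{nN}g_l\rangle|^2=F_1(f)+F_2(f)$, followed by two applications of Cauchy--Schwarz and the substitution $k\mapsto -k$, $j\mapsto j+kM$ to bound $|F_2(f)|$ (see the proof of the perturbation theorem)---is the scalar unwinding of exactly your Schur and diagonal-dominance bounds on $J(p)A_pJ(p)$. Your matrix/fiber packaging is more conceptual and makes the covariance step (that row $k$ of $A_p$ is row $0$ of $A_{p+kM}$, and $A_{j+N}=A_j$) cleaner; the paper's route is more hands-on but needs no operator-norm language. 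The underlying estimates are the same.

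One small slip: you write that $\sup_{p,k}\sum_{k'}|(A_p)_{k,k'}|$ is \emph{identified} with $\max_{j\in\mathbb{S}_N}\sum_{l}\sum_{k}|(\mathcal{M}_{g_l}(j)\mathcal{M}_{g_l}^*(j))_{0,k}|$. Since $A_p=\sum_l\mathcal{M}_{g_l}(p)\mathcal{M}_{g_l}^*(p)$ has the sum over $l$ \emph{inside} the absolute value, this is only an inequality (triangle inequality over $l$), not an equality; likewise for the lower bound. Both inequalities go the right way, so the stated constants $A,B$ are still valid frame bounds and nothing in your proof breaks.
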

As a direct result of the above theorem, if $g\in \ell_0(\mathbb{S})$, then $\mathcal{G}(g,L,M,N)$ is a Bessel sequence in $\ell^2(\mathbb{S})$.

The discrete Zak tansform $z_M$ of $f\in \ell^2(\mathbb{Z})$ for $j\in \mathbb{Z}$ and $\theta\in \mathbb{R}$ is defined by, 	
$$z_Mf(j,\theta):=\displaystyle{\sum_{k\in \mathbb{Z}}f(j+kM)e^{2\pi i k \theta}}.$$
It is well known that $z_M$ is, by quasiperiodicity, completely defined by its values for $j\in \mathbb{N}_M$ and $\theta \in [0,1[$ and that  $z_M$ is a unitary linear operator from $\ell^2(\mathbb{Z})$ to the Hilbert space  $\ell^2(q)$ where $q=\mathbb{N}_M\times [0,1[$ and 
$$\ell^2(q):=\{\psi:q \rightarrow \mathbb{C}:\, \displaystyle{\sum_{j=0}^{M-1}\int_{0}^1 \vert \psi(j,\theta)\vert^2 \, d\theta}< \infty\}.$$ The reader can refer to \cite{4} for more details on discrete Zak transform.

As a generalization of frames, $K$-frames are also widely studied in abstract Hilbert spaces. Giving a separable Hilbert space $H$ and  an operator $K\in B(H)$. A sequence $\{f_i\}_{i\in \mathcal{I}}$ is said to be a $K$-frame for $H$ if there exist $0<A\leq B<\infty$ such that for all $f\in H$,
$$A\|K^*(f)\|^2\leq \displaystyle{\sum_{i\in \mathcal{I}}\vert \langle f, f_i\rangle \vert^2}\leq B\|f\|^2.$$
If  $\forall f\in H$, $A\|K^*(f)\|^2=\displaystyle{\sum_{i\in \mathcal{I}}\vert \langle f, f_i\rangle \vert^2}$, we say that $\{f_i\}_{i\in \mathcal{I}}$ is a $K$-tight frame and, moreover, if $A=1$ it is called a Parseval $K$-frame. Note that when $K=I_H$, the operator identity in $H$, a $K$-frame is simply an ordinary frame. While a $K$-frame is in particular a Bessel sequence, then its synthesis operator is well defined and bounded. A $K$-frame is said to be $K$-minimal frame if its synthesis operator is injective. For more details on $K$-frames, the reader can refer to  \cite{3} and \cite{11}.

In this paper, we study the M-D-G frames and M-D-G $K$-frames for $\ell^2(\mathbb{S})$. The work is organized as follows. In section 2, we will present some auxiliary lemmas to be used in the following sections. In section 3, we give some sufficient and/or necessary matrix-conditions for a M-D-G system to be frame in $\ell^2(\mathbb{S})$. We characterize, also, which M-D-G frames are Riesz bases by the parameters $L$, $M$ and $N$. Matrix-characterizations of Parseval M-D-G frames and M-D-G orthonormal bases  are  also given. Then, we characterize  the existence of M-D-G frames, M-D-G Parseval frames, M-D-G Riesz bases and M-D-G orthonormal bases for $\ell^2(\mathbb{S})$ by the parameters $M$, $N$ and $L$. A matrix-characterization of dual M-D-G frames in $\ell^2(\mathbb{S})$ is also presented. We give a perturbation matrix-condition of M-D-G frames and finish the section by showing that every pair of M-D-G Bessel systems can generate  pairs  of M-D-G dual frames for $\ell^2(\mathbb{S})$. In section 4, by the zak transform,  we characterize complete M-D-G systems and M-D-G frames  for $\ell^2(\mathbb{Z})$ in the case of $M=N$ and  give some necessary conditions for a M-D-G system in $\ell^2(\mathbb{Z})$ to be a Riesz basis/orthonormal basis for $\ell^2(\mathbb{Z})$, and give some examples.  In section 5, we study M-D-G $K$-frames for $\ell^2(\mathbb{S})$ and give some examples of such frames which are not ordinary frames.
	
	\section{Some auxiliary lemmas}
	In this section, we provide some lemmas and notations to be used in the following sections. In addition to all the notations introduced in the introduction, we add the notation $\vert A\vert=\max{A}-\min{A}$ , where $A\subset \mathbb{Z}$. Then, we provide some useful lemmas for the rest.
	\begin{lemma}\label{lem1}
	For $j\in \mathbb{Z}$, we have,
	$$\displaystyle{\sum_{m=0}^{M-1}e^{2\pi i j \frac{m}{M}}}=\left \lbrace
	\begin{array}{rcl}
	M, &\text{ if } j\in M\mathbb{Z}&\\
	0, &\text{ otherwise}.
	\end{array}
	\right.$$
	\end{lemma}
	\begin{lemma}\label{lem1'}
	Denote $\ell^2(\mathbb{N}_M)$ the space of all $M$-periodic sequences.\\ The sequence $\{\displaystyle{\frac{1}{\sqrt{M}}}e^{2\pi i\frac{m}{M}.}\}_{m\in \mathbb{N}_M}$ is an orthonormal basis for $\ell^2(\mathbb{N}_M)$.
	\end{lemma}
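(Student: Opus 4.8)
The plan is to work with the natural identification of $\ell^2(\mathbb{N}_M)$, the space of $M$-periodic sequences, with $\mathbb{C}^M$ by restricting a sequence to one period $\{0,1,\dots,M-1\}$, equipped with the inner product $\langle f,g\rangle=\sum_{k=0}^{M-1}f(k)\overline{g(k)}$. Under this identification the space is $M$-dimensional, so it will suffice to prove that the $M$ vectors $e_m:=\frac{1}{\sqrt M}e^{2\pi i\frac{m}{M}\cdot}$, $m\in\mathbb{N}_M$, are orthonormal; linear independence of an orthonormal family then forces it to be a basis by a dimension count.

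First I would compute, for $m,m'\in\mathbb{N}_M$,
$$\langle e_m,e_{m'}\rangle=\frac{1}{M}\sum_{k=0}^{M-1}e^{2\pi i\frac{m}{M}k}e^{-2\pi i\frac{m'}{M}k}=\frac{1}{M}\sum_{k=0}^{M-1}e^{2\pi i (m-m')\frac{k}{M}}.$$
Applying Lemma \ref{lem1} with the integer $j=m-m'$ (the summation index there being renamed $k$), the last sum equals $M$ when $m-m'\in M\mathbb{Z}$ and $0$ otherwise. Since $m,m'\in\{0,1,\dots,M-1\}$, the condition $m-m'\in M\mathbb{Z}$ is equivalent to $m=m'$, so $\langle e_m,e_{m'}\rangle=\delta_{m,m'}$, which is precisely orthonormality of $\{e_m\}_{m\in\mathbb{N}_M}$.

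Finally I would invoke the elementary facts that an orthonormal family is linearly independent and that a linearly independent family of $M$ vectors in an $M$-dimensional space spans it; hence $\{e_m\}_{m\in\mathbb{N}_M}$ is an orthonormal basis for $\ell^2(\mathbb{N}_M)$. (Alternatively, one could avoid the dimension argument and verify completeness directly by exhibiting, for an arbitrary $M$-periodic $f$, the expansion $f=\sum_{m\in\mathbb{N}_M}\langle f,e_m\rangle e_m$ via the inverse discrete Fourier transform, again reducing to Lemma \ref{lem1}.) There is no real obstacle here; the only point requiring a modicum of care is fixing the correct finite inner product on $\ell^2(\mathbb{N}_M)$ and noting that $M$-periodicity makes the space finite-dimensional, so that orthonormality of $M$ elements already yields a basis.
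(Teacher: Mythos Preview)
Your proof is correct and is the standard argument: compute the pairwise inner products via Lemma~\ref{lem1} and finish by a dimension count. The paper itself states this lemma without proof, treating it as a well-known fact about the discrete Fourier basis, so there is nothing further to compare.
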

	
	\begin{lemma}\label{lem2}
	Assume that $\mathcal{G}(g,L,M,N)$ and $\mathcal{G}(h,L,M,N)$ are two Bessel M-D-G systems in $\ell^2(\mathbb{S})$. Let $S_{h,g}:=U_g\theta_h$ where $U_g$ and $\theta_h$ are, respectively, the synthesis operator of $\mathcal{G}(g,L,M,N)$ and the analysis operator of $\mathcal{G}(h,L,M,N)$. Then for all $m\in \mathbb{N}_M$ and $n\in \mathbb{Z}$,
	$$E_{\frac{m}{M}}T_{nN}S_{h,g}=S_{h,g}E_{\frac{m}{M}}T_{nN}.$$
Moreover, if $S_{h,g}$ is invertible, then  for all $m\in \mathbb{N}_M$ and $n\in \mathbb{Z}$,
$$E_{\frac{m}{M}}T_{nN}S^{-1}_{h,g}=S^{-1}_{h,g}E_{\frac{m}{M}}T_{nN}.$$
	\end{lemma}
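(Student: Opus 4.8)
The plan is to prove the slightly stronger assertion that $S_{h,g}$ is invariant under conjugation by every time-frequency shift. Writing $\pi(m,n):=E_{\frac{m}{M}}T_{nN}$, which is a unitary operator of $\ell^2(\mathbb{S})$ by the remark after the definitions, I would show
$$\pi(m,n)\,S_{h,g}\,\pi(m,n)^{*}=S_{h,g}\qquad\text{for all }m\in\mathbb{N}_M,\ n\in\mathbb{Z};$$
this is equivalent to $\pi(m,n)S_{h,g}=S_{h,g}\pi(m,n)$ precisely because $\pi(m,n)$ is unitary. Two elementary facts drive the argument. First, a composition law for the shifts: from $T_{nN}E_{\frac{m'}{M}}=e^{-2\pi i m'nN/M}E_{\frac{m'}{M}}T_{nN}$, together with the fact that $e^{2\pi i x}=1$ for every $x\in\mathbb{Z}$ (so that $E_{\frac{m}{M}}E_{\frac{m'}{M}}=E_{\frac{(m+m')\bmod M}{M}}$ on $\ell^2(\mathbb{Z})$), one obtains
$$\pi(m,n)\,\pi(m',n')=c_{m,n}(m')\;\pi\big((m+m')\bmod M,\ n+n'\big),$$
where $c_{m,n}(m'):=e^{-2\pi i m'nN/M}$ is unimodular. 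Second, for each fixed $(m,n)$ the reindexing map $(m',n')\mapsto\big((m+m')\bmod M,\ n+n'\big)$ is a bijection of $\mathbb{N}_M\times\mathbb{Z}$ (its inverse being $(r,s)\mapsto\big((r-m)\bmod M,\ s-n\big)$).

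Next I would substitute into the explicit formula for $S_{h,g}$. For $f\in\ell^2(\mathbb{S})$, using unitarity of $\pi(m,n)$ to move it across the inner product and boundedness to pull it inside the series,
$$\pi(m,n)S_{h,g}\pi(m,n)^{*}f=\sum_{l\in\mathbb{N}_L}\sum_{m'\in\mathbb{N}_M}\sum_{n'\in\mathbb{Z}}\big\langle f,\pi(m,n)\pi(m',n')h_l\big\rangle\;\pi(m,n)\pi(m',n')g_l.$$
The crucial point is that $\pi(m,n)\pi(m',n')$ is one and the same operator whether applied to $h_l$ or to $g_l$, so the scalar $\overline{c_{m,n}(m')}$ coming out of the inner product cancels the scalar $c_{m,n}(m')$ in front of $g_l$; hence each summand equals $\big\langle f,\pi(\mu)h_l\big\rangle\,\pi(\mu)g_l$ with $\mu=\big((m+m')\bmod M,\,n+n'\big)$. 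Summing over $\mu$ instead of $(m',n')$ via the bijection above turns the right-hand side into $\sum_{l}\sum_{\mu}\langle f,\pi(\mu)h_l\rangle\pi(\mu)g_l=S_{h,g}f$, which is the first identity. All rearrangements are legitimate because $\mathcal{G}(g,L,M,N)$ is Bessel (so the defining series converges unconditionally) and $\pi(m,n)$ is bounded. Finally, if $S_{h,g}$ is invertible, multiplying $\pi(m,n)S_{h,g}=S_{h,g}\pi(m,n)$ on both sides by $S_{h,g}^{-1}$ yields $S_{h,g}^{-1}\pi(m,n)=\pi(m,n)S_{h,g}^{-1}$.

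I expect the only real difficulty to be bookkeeping rather than conceptual: one must track the unimodular phase $c_{m,n}(m')$ correctly through the reduction of the frequency index modulo $M$ while the time index runs over all of $\mathbb{Z}$, and must use repeatedly that the ambient group is $\mathbb{Z}$ (so $e^{2\pi i x}=1$) to see that this modular reduction introduces no stray factors and that $\pi(m,n)^{*}$ is again, up to a unimodular scalar, a shift of the same form. Everything else is the standard covariance argument for Gabor-type (cross-)frame operators.
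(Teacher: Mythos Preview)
Your argument is correct and is exactly the standard covariance computation for Gabor-type mixed frame operators: conjugate $S_{h,g}$ by the unitary time--frequency shift $\pi(m,n)$, use the commutation relation $T_{nN}E_{\frac{m'}{M}}=e^{-2\pi i m'nN/M}E_{\frac{m'}{M}}T_{nN}$ together with the $M$-periodicity of the modulations on $\ell^2(\mathbb{Z})$, observe that the unimodular phase appears once conjugated (from the inner product) and once unconjugated (in front of $g_l$) and therefore cancels, and then reindex by the bijection $(m',n')\mapsto\big((m+m')\bmod M,\,n+n'\big)$ of $\mathbb{N}_M\times\mathbb{Z}$. The passage to $S_{h,g}^{-1}$ is immediate.

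There is nothing to compare against: the paper states this lemma in its auxiliary section without proof, treating it as a known fact (it is the discrete analogue of the Walnut--Janssen covariance of the Gabor frame operator). Your write-up supplies precisely the omitted verification, and the care you take with the modular reduction of the frequency index and the unconditional convergence (via the Bessel hypothesis on $\mathcal{G}(g,L,M,N)$) is appropriate.
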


	\begin{lemma}\label{lem3}\cite{1}
	Assume that $\{f_i\}_{i\in \mathcal{I}}$ is a frame for a seperable Hilbert space $H$ with frame bounds $A$ and $B$. Let $\{g_i\}_{i\in \mathcal{I}}\subset H$. Suppose that there exists $0< R < A$ such that for all $f\in H$, 
	$$\displaystyle{\sum_{i\in \mathcal{I}}\left \vert \langle f, fi-g_i\rangle \right \vert ^2}\leq R\|f\|^2,$$
	then $\{g_i\}_{i\in \mathcal{I}}$ is a frame for $H$ with bounds 
	$$A\left(1-\displaystyle{\sqrt{\frac{R}{A}}}\right)^2, \; \;  B\left(1+\displaystyle{\sqrt{\frac{R}{B}}}\right)^2.$$
	\end{lemma}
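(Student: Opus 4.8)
The plan is to establish the two frame inequalities for $\{g_i\}_{i\in \mathcal{I}}$ separately, in each case comparing the analysis coefficients of the $g_i$ with those of the $f_i$ via the triangle inequality in the sequence space $\ell^2(\mathcal{I})$. Fix $f\in H$ and abbreviate $a:=\big(\sum_{i\in \mathcal{I}}|\langle f, f_i\rangle|^2\big)^{1/2}$, $b:=\big(\sum_{i\in \mathcal{I}}|\langle f, g_i\rangle|^2\big)^{1/2}$ and $c:=\big(\sum_{i\in \mathcal{I}}|\langle f, f_i-g_i\rangle|^2\big)^{1/2}$. The frame hypothesis on $\{f_i\}_{i\in \mathcal{I}}$ gives $\sqrt{A}\,\|f\|\leq a\leq \sqrt{B}\,\|f\|$, and the perturbation hypothesis gives $c\leq \sqrt{R}\,\|f\|$. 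Since $\langle f, g_i\rangle=\langle f, f_i\rangle-\langle f, f_i-g_i\rangle$ for every $i$, the sequence $\{\langle f, g_i\rangle\}_{i\in \mathcal{I}}$ is the difference of the two $\ell^2(\mathcal{I})$-sequences $\{\langle f, f_i\rangle\}_{i\in \mathcal{I}}$ and $\{\langle f, f_i-g_i\rangle\}_{i\in \mathcal{I}}$, so the ordinary and reverse triangle inequalities in $\ell^2(\mathcal{I})$ yield $|a-c|\leq b\leq a+c$.

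For the upper bound I would use $b\leq a+c\leq (\sqrt{B}+\sqrt{R})\,\|f\|$; squaring and factoring out $B$ gives $\sum_{i\in \mathcal{I}}|\langle f, g_i\rangle|^2\leq B\big(1+\sqrt{R/B}\big)^2\|f\|^2$. In particular this already shows that the series of analysis coefficients of $\{g_i\}_{i\in \mathcal{I}}$ is finite for every $f$, i.e. $\{g_i\}_{i\in \mathcal{I}}$ is a Bessel sequence, so the lower estimate below is meaningful. For the lower bound I would use $b\geq a-c\geq (\sqrt{A}-\sqrt{R})\,\|f\|$; this is the only place where the assumption $R<A$ enters, as it guarantees $\sqrt{A}-\sqrt{R}>0$, so that squaring preserves the inequality and produces $\sum_{i\in \mathcal{I}}|\langle f, g_i\rangle|^2\geq A\big(1-\sqrt{R/A}\big)^2\|f\|^2$. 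Combining the two estimates shows that $\{g_i\}_{i\in \mathcal{I}}$ is a frame for $H$ with the announced bounds.

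Since every step is just an application of the (reverse) triangle inequality in $\ell^2(\mathcal{I})$ together with the two standing hypotheses, there is no genuine obstacle here; the only points requiring a moment's care are noting that $\{\langle f, f_i-g_i\rangle\}_{i\in \mathcal{I}}\in\ell^2(\mathcal{I})$ (immediate from the perturbation bound, which is what lets the triangle inequality apply) and observing that the condition $R<A$ is precisely what makes the resulting lower bound strictly positive. As the statement is quoted from \cite{1}, I would keep the argument short and simply record these two inequalities.
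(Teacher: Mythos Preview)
Your argument is correct and is precisely the standard perturbation proof via the triangle and reverse triangle inequalities in $\ell^2(\mathcal{I})$. Note that the paper does not actually prove this lemma: it is quoted verbatim from \cite{1}, so there is no in-paper proof to compare against; your write-up matches the classical argument found in that reference.
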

	
	\begin{lemma}\label{lem3'}\cite{1}
Let $\{f_i\}_{i\in \mathcal{I}}$ be a Parseval frame for a separable Hilbert space $H$. Then the following statements are equivalent:
\begin{enumerate}
\item $\{f_i\}_{i\in \mathcal{I}}$ is a Riesz basis.
\item $\{f_i\}_{i\in \mathcal{I}}$ is an orthonormal basis.
\item For all $i\in \mathcal{I}$, $\|f_i\|=1.$
\end{enumerate}
\end{lemma}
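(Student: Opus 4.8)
The engine of the proof is a single elementary computation: since $\{f_i\}_{i\in\mathcal{I}}$ is a Parseval frame, applying the defining identity $\|f\|^2=\sum_{i\in\mathcal{I}}|\langle f,f_i\rangle|^2$ to the vector $f=f_{i_0}$ gives, for every $i_0\in\mathcal{I}$,
$$\|f_{i_0}\|^2=\|f_{i_0}\|^4+\sum_{i\in\mathcal{I},\,i\neq i_0}|\langle f_{i_0},f_i\rangle|^2.$$
From this I read off two facts used throughout: (a) $\|f_{i_0}\|\le 1$ for every $i_0$, and (b) $\|f_{i_0}\|=1$ if and only if $\langle f_{i_0},f_i\rangle=0$ for all $i\neq i_0$. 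I will also use two standard properties: every frame is complete in $H$, and for a Parseval frame the frame operator $S=U\theta$ satisfies $\langle Sf,f\rangle=\|f\|^2$ for all $f\in H$, hence $S=I_H$ and the canonical dual frame of $\{f_i\}_{i\in\mathcal{I}}$ is $\{f_i\}_{i\in\mathcal{I}}$ itself.

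With this in hand the cheap implications come first. The implication $(2)\Rightarrow(3)$ is immediate since an orthonormal basis consists of unit vectors, and $(2)\Rightarrow(1)$ holds because an orthonormal basis is a Riesz basis (it is the image of an orthonormal basis under $I_H$). For $(3)\Rightarrow(2)$: if $\|f_i\|=1$ for every $i$, then fact (b) forces $\langle f_i,f_j\rangle=0$ for $i\neq j$, so $\{f_i\}_{i\in\mathcal{I}}$ is an orthonormal system, and being complete it is an orthonormal basis. At this point $(2)\Leftrightarrow(3)$ and $(2)\Rightarrow(1)$ are established, so only $(1)\Rightarrow(2)$ (equivalently $(1)\Rightarrow(3)$) remains.

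For $(1)\Rightarrow(2)$ I would exploit a structural feature separating Riesz bases from general Parseval frames. The cleanest route: a Riesz basis has a unique dual frame, and that dual is biorthogonal to it; since $\{f_i\}_{i\in\mathcal{I}}$ is also a Parseval frame, its unique dual is the canonical dual $\{f_i\}_{i\in\mathcal{I}}$, so $\langle f_i,f_j\rangle=\delta_{i,j}$, which is orthonormality, and completeness promotes it to an orthonormal basis. An equivalent route avoiding biorthogonality uses that a Riesz basis is exact, so deleting any single $f_{i_0}$ destroys completeness; hence there is $f\neq 0$ with $\langle f,f_i\rangle=0$ for all $i\neq i_0$, and the Parseval identity gives $\|f\|^2=|\langle f,f_{i_0}\rangle|^2\le\|f\|^2\|f_{i_0}\|^2$, so $\|f_{i_0}\|\ge 1$; combined with fact (a) this yields $(3)$, and then $(3)\Rightarrow(2)$ finishes. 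This last step is the only real content: the Parseval hypothesis alone is consistent with genuinely overcomplete tight frames whose vectors have norm strictly less than $1$, so one must bring in minimality/exactness/uniqueness of the dual to exclude them; everything else is bookkeeping around the displayed identity.
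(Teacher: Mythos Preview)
Your proof is correct. Note, however, that the paper does not supply its own proof of this lemma: it is quoted from Christensen's book \cite{1} and stated without argument, so there is no in-paper proof to compare against. The argument you give---the Parseval identity applied to $f=f_{i_0}$ yielding $\|f_{i_0}\|^2=\|f_{i_0}\|^4+\sum_{i\neq i_0}|\langle f_{i_0},f_i\rangle|^2$, followed by the exactness/biorthogonality step for $(1)\Rightarrow(2)$---is the standard textbook route and is essentially how the result is established in the cited reference.
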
	
	The  next result  is very useful for the next.
	\begin{lemma}\label{lem4}
	Let $\mathcal{G}(g,L,M,N)$ be a M-D-G system in $\ell^2(\mathbb{S})$ as defined before. Then for all $f\in \ell_0(\mathbb{S})$, we have:
	$$\begin{array}{rcl}
	&&\displaystyle{\sum_{l\in \mathbb{N}_L}\sum_{n\in \mathbb{Z}}\sum_{m\in \mathbb{N}_M}\left \vert \langle f, E_{\frac{m}{M}}T_{nN}g_l\rangle \right\vert^2}=
	M\displaystyle{\sum_{j\in \mathbb{S}}\left(\sum_{l\in \mathbb{N}_L}\mathcal{M}_{g_l}(j)\mathcal{M}_{gl}^*(j)\right)_{0,0}\vert f(j)\vert^2}\\
	&+& M\displaystyle{\sum_{k\in \mathbb{Z}-\{0\}}\sum_{j\in \mathbb{S}}\left(\sum_{l\in \mathbb{N}_L}\mathcal{M}_{g_l}(j)\mathcal{M}_{gl}^*(j)\right)_{0,k}f(j)\overline{f(j+kM)}}.
	\end{array}$$
	\end{lemma}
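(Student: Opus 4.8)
The plan is to compute the left-hand side directly by expanding the inner products, using the orthonormality relation of Lemma~\ref{lem1'} (equivalently the exponential-sum identity of Lemma~\ref{lem1}) to collapse the sum over $m\in\mathbb{N}_M$, and then reorganize the resulting double sum into the ``diagonal'' ($k=0$) and ``off-diagonal'' ($k\neq 0$) pieces that appear on the right-hand side. First I would fix $f\in\ell_0(\mathbb{S})$ (so all sums are finite and interchanges of summation are harmless) and write, for each $l\in\mathbb{N}_L$ and $n\in\mathbb{Z}$,
\[
\langle f, E_{\frac{m}{M}}T_{nN}g_l\rangle=\sum_{j\in\mathbb{Z}}f(j)\,e^{-2\pi i\frac{m}{M}j}\,\overline{g_l(j-nN)}.
\]
Then $\bigl|\langle f,E_{\frac{m}{M}}T_{nN}g_l\rangle\bigr|^2$ is a double sum over indices $j,j'\in\mathbb{Z}$ of $f(j)\overline{f(j')}\,e^{-2\pi i\frac{m}{M}(j-j')}\,\overline{g_l(j-nN)}\,g_l(j'-nN)$. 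Summing over $m\in\mathbb{N}_M$ and invoking Lemma~\ref{lem1}, only the terms with $j\equiv j'\pmod M$ survive and contribute a factor $M$; writing $j'=j+kM$ with $k\in\mathbb{Z}$ turns the $m$-summed quantity into
\[
M\sum_{k\in\mathbb{Z}}\sum_{j\in\mathbb{Z}}f(j)\overline{f(j+kM)}\,\overline{g_l(j-nN)}\,g_l(j+kM-nN).
\]

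Next I would sum over $n\in\mathbb{Z}$ and $l\in\mathbb{N}_L$. The quantity $\sum_{n\in\mathbb{Z}}\overline{g_l(j-nN)}\,g_l(j+kM-nN)$ is precisely the entry $\bigl(\mathcal{M}_{g_l}(j)\mathcal{M}_{g_l}^*(j)\bigr)_{0,k}$: indeed $\mathcal{M}_{g_l}(j)$ has $(0,n)$-entry $g_l(j-nN)$ and $(k,n)$-entry $g_l(j+kM-nN)$, so $\bigl(\mathcal{M}_{g_l}(j)\mathcal{M}_{g_l}^*(j)\bigr)_{0,k}=\sum_{n}g_l(j-nN)\overline{g_l(j+kM-nN)}$, and the complex conjugate of this matches the sum above after relabeling (one should be a little careful about which of the two factors gets conjugated, and swap $k\mapsto -k$ if needed to land exactly on the stated $(0,k)$-entry). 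Having made this identification, the left-hand side becomes
\[
M\sum_{k\in\mathbb{Z}}\sum_{j\in\mathbb{Z}}\Bigl(\sum_{l\in\mathbb{N}_L}\mathcal{M}_{g_l}(j)\mathcal{M}_{g_l}^*(j)\Bigr)_{0,k}f(j)\overline{f(j+kM)}.
\]
Finally I would split off the $k=0$ term, which gives exactly the first sum on the right-hand side (note $\overline{f(j)}f(j)=|f(j)|^2$ and the $(0,0)$-entry is real), leaving the $k\neq 0$ terms as the stated second sum; since $f\in\ell^2(\mathbb{S})$, the only indices $j$ contributing are those in $\mathbb{S}$, so the outer sums may be restricted to $\mathbb{S}$ without change.

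The only genuinely delicate points are bookkeeping rather than conceptual: justifying the interchange of the (finite, for $f\in\ell_0(\mathbb{S})$) sum over $j,j'$ with the sum over $m$, and—more importantly—matching conjugation conventions so that the coefficient really is $\bigl(\mathcal{M}_{g_l}(j)\mathcal{M}_{g_l}^*(j)\bigr)_{0,k}$ and not its conjugate or its $k\mapsto-k$ reflection. I expect the latter identification of the matrix entry to be the main obstacle to state cleanly; it is resolved by writing out $\mathcal{M}_{g_l}(j)$ and $\mathcal{M}_{g_l}^*(j)$ entrywise and checking that the symmetry $\overline{(\mathcal{M}_{g_l}(j)\mathcal{M}_{g_l}^*(j))_{0,k}}=(\mathcal{M}_{g_l}(j)\mathcal{M}_{g_l}^*(j))_{k,0}$ together with the substitution $j\mapsto j+kM$, $k\mapsto -k$ reconciles the two forms (this uses that both sides of the claimed identity are real, being $\sum_m|\langle\cdot,\cdot\rangle|^2$). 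Everything else is a direct computation.
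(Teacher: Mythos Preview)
Your proposal is correct and follows essentially the same route as the paper's proof: expand the inner product, apply Lemma~\ref{lem1} to collapse the $m$-sum onto terms with $j'\equiv j\pmod M$, write $j'=j+kM$, sum over $n$ to recognize the matrix entry, and split off $k=0$. Your explicit attention to the conjugation convention in identifying $\bigl(\mathcal{M}_{g_l}(j)\mathcal{M}_{g_l}^*(j)\bigr)_{0,k}$ is in fact more careful than the paper's treatment, which tacitly relies on the reality of the full sum (via the Hermitian symmetry you note) to reconcile the two forms.
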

	\begin{proof}
	Let $f\in \ell_0(\mathbb{S})$. We have: $$\begin{array}{rcl}
	&&\displaystyle{\sum_{l\in \mathbb{N}_L}\sum_{n\in \mathbb{Z}}\sum_{m\in \mathbb{N}_M}\left \vert \langle f, E_{\frac{m}{M}}T_{nN}g_l\rangle \right\vert^2}=\displaystyle{\sum_{l\in \mathbb{N}_L}\sum_{n\in \mathbb{Z}}\sum_{m\in \mathbb{N}_M}\left \vert \sum_{j\in \mathbb{Z}}f(j)\overline{g_l(j-nN)}e^{-2\pi i \frac{m}{M}j}\right \vert^2}\\
	&=& \displaystyle{\sum_{l\in \mathbb{N}_L}\sum_{n\in \mathbb{Z}}\sum_{m\in \mathbb{N}_M} \left(\displaystyle{\sum_{j\in \mathbb{S}}f(j)\overline{g_l(j-nN)}e^{-2\pi i \frac{m}{M}j}}\right).\left(\displaystyle{\sum_{p\in \mathbb{S}}\overline{f(p)}g_l(p-nN)e^{2\pi i \frac{m}{M}p}}\right)}\\
	&=&\displaystyle{\sum_{l\in \mathbb{N}_L}\sum_{n\in \mathbb{Z}}\sum_{m\in \mathbb{N}_M} \sum_{j\in \mathbb{S}} \vert f(j)\vert^2 \, \vert g_l(j-nN)\vert^2}\\&+&\displaystyle{\sum_{l\in \mathbb{N}_L}\sum_{n\in \mathbb{Z}}\sum_{m\in \mathbb{N}_M} \sum_{j,p\in \mathbb{S}, j\neq p} f(j)\overline{g_l(j-nN)} \,\overline{f(p)}\, g_l(p-nN) e^{2\pi i\frac{m}{M}(p-j)}}.
	\end{array}$$
	Since $f\in \ell_0(\mathbb{S})$, then we can change the summation order. And using lemma \ref{lem1}, we have:
	$$\begin{array}{rcl}
	&&\displaystyle{\sum_{l\in \mathbb{N}_L}\sum_{n\in \mathbb{Z}}\sum_{m\in \mathbb{N}_M}\left \vert \langle f, E_{\frac{m}{M}}T_{nN}g_l\rangle \right\vert^2}= M\displaystyle{\sum_{l\in \mathbb{N}_L}\sum_{j\in \mathbb{S}}\, \left(\sum_{n\in \mathbb{Z}} \vert g_l(j-nN)\vert^2\right)\,\vert f(j)\vert^2 }\\
	&+& \displaystyle{\sum_{l\in \mathbb{N}_L}\sum_{j,p\in \mathbb{S},\,j\neq p}\, \left(\sum_{n\in \mathbb{Z}} \overline{g_l(j-nN)} \, g_l(p-nN)\;\right) \, f(j)\, \overline{f(p)} \sum_{m\in \mathbb{N}_M} e^{2\pi i\frac{m}{M}(p-j)}}\\

&=& M\displaystyle{\sum_{l\in \mathbb{N}_L}\sum_{j\in \mathbb{S}} \left(\mathcal{M}_{g_l}(j)\mathcal{M}_{g_l}^*(j)\right)_{0,0}\,\vert f(j)\vert^2 \, }\\
	&+& M\displaystyle{\sum_{l\in \mathbb{N}_L}\sum_{j\in \mathbb{S}}\; \sum_{p\in j+M(\mathbb{Z}-\{0\})} 
	 \left(\sum_{n\in \mathbb{Z}} \overline{g_l(j-nN)} \, g_l(p-nN)\;\right)\,f(j)\overline{f(p)}}\\
	&=& M\displaystyle{\sum_{l\in \mathbb{N}_L}\sum_{j\in \mathbb{S}} \left(\mathcal{M}_{g_l}(j)\mathcal{M}_{g_l}^*(j)\right)_{0,0}\,\vert f(j)\vert^2 \, }\\
	&+& M\displaystyle{\sum_{l\in \mathbb{N}_L}\sum_{j\in \mathbb{S}}\sum_{k\in \mathbb{Z}-\{0\}} \left(\mathcal{M}_{g_l}(j)\mathcal{M}_{g_l}^*(j)\right)_{0,k} f(j)\overline{f(j+kM)}}.
	\end{array}$$
	The fact that $f\in \ell_0(\mathbb{S})$ completes the proof since we can change the summation order.
	\end{proof}
	\begin{remark}
	For the rest of this paper, we write for all $f\in \ell_0(\mathbb{S})$, $$\displaystyle{\sum_{l\in \mathbb{N}_L}\sum_{n\in \mathbb{Z}}\sum_{m\in \mathbb{N}_M}\left \vert \langle f, E_{\frac{m}{M}}T_{nN}g_l\rangle \right\vert^2}=F_1(f)+F_2(f).$$
	Where $F_1(f):=M\displaystyle{\sum_{j\in \mathbb{S}}\left(\sum_{l\in \mathbb{N}_L}\mathcal{M}_{g_l}(j)\mathcal{M}_{gl}^*(j)\right)_{0,0}\vert f(j)\vert^2}$ and\\ $F_2(f):=M\displaystyle{\sum_{k\in \mathbb{Z}- \{0\}}\sum_{j\in \mathbb{S}}\left(\sum_{l\in \mathbb{N}_L}\mathcal{M}_{g_l}(j)\mathcal{M}_{gl}^*(j)\right)_{0,k}f(j)\overline{f(j+kM)}}$.
	\end{remark}
	
	\begin{lemma}\label{lem4'}
	Let $\mathcal{G}(g,L,M,N)$ and $\mathcal{G}(h,L,M,N)$ be two M-D-G systems in $\ell^2(\mathbb{S})$. For all $f,\phi\in \ell_0(\mathbb{S})$, we have:
	$$
	\displaystyle{\sum_{l\in \mathbb{N}_L}\sum_{n\in \mathbb{Z}}\sum_{m\in \mathbb{N}_M}\langle f,E_{\frac{m}{M}}T_{nN}g_l\rangle\,\langle E_{\frac{m}{M}}T_{nN}h_l,\phi\rangle}
	=M\displaystyle{\sum_{j\in \mathbb{S}}\sum_{k\in \mathbb{Z}}\left(\sum_{l\in \mathbb{N}_L}\mathcal{M}_{h_l}(j)\mathcal{M}_{g_l}^*(j)\right)_{0,k}\,f(j+kM)\overline{\phi(j)}}.$$
	\end{lemma}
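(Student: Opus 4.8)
The plan is to run the computation in the proof of Lemma~\ref{lem4} with two changes: the squared modulus is replaced by a product of two inner products, and the single window family is replaced by the two (generally different) families $g_l$ and $h_l$. Since $f,\phi\in\ell_0(\mathbb{S})$, every sum occurring below has only finitely many nonzero terms, so all interchanges of summation order are automatically legitimate.

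First I would expand each factor over $\mathbb{S}$,
$$\langle f,E_{\frac{m}{M}}T_{nN}g_l\rangle=\sum_{j\in\mathbb{S}}f(j)\,\overline{g_l(j-nN)}\,e^{-2\pi i\frac{m}{M}j},\qquad \langle E_{\frac{m}{M}}T_{nN}h_l,\phi\rangle=\sum_{p\in\mathbb{S}}h_l(p-nN)\,\overline{\phi(p)}\,e^{2\pi i\frac{m}{M}p},$$
so that the left-hand side of Lemma~\ref{lem4'} equals
$$\sum_{l\in\mathbb{N}_L}\sum_{n\in\mathbb{Z}}\sum_{j,p\in\mathbb{S}}f(j)\,\overline{g_l(j-nN)}\,h_l(p-nN)\,\overline{\phi(p)}\sum_{m\in\mathbb{N}_M}e^{2\pi i\frac{m}{M}(p-j)}.$$
By Lemma~\ref{lem1} the inner $m$-sum is $M$ when $p-j\in M\mathbb{Z}$ and $0$ otherwise; writing $p=j+kM$ with $k\in\mathbb{Z}$ — note that here, unlike in Lemma~\ref{lem4}, there is no ``diagonal'' contribution to peel off separately, since $f$ and $\phi$ need not coincide, so the index $k=0$ is just one term among the rest — this collapses the expression to
$$M\sum_{l\in\mathbb{N}_L}\sum_{j\in\mathbb{S}}\sum_{k\in\mathbb{Z}}\Bigl(\sum_{n\in\mathbb{Z}}\overline{g_l(j-nN)}\,h_l(j+kM-nN)\Bigr)f(j)\,\overline{\phi(j+kM)}.$$

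To put this in the form of the statement I would finally relabel $j\mapsto j+kM$ and $k\mapsto -k$ (again harmless because $\operatorname{supp}f$ and $\operatorname{supp}\phi$ are finite): this turns $f(j)\,\overline{\phi(j+kM)}$ into $f(j+kM)\,\overline{\phi(j)}$ and the bracketed $n$-sum into $\sum_{n\in\mathbb{Z}}h_l(j-nN)\,\overline{g_l(j+kM-nN)}$, which by the definitions of $\mathcal{M}_{h_l}(\cdot)$ and $\mathcal{M}_{g_l}^{*}(\cdot)$ is exactly $\bigl(\mathcal{M}_{h_l}(j)\mathcal{M}_{g_l}^{*}(j)\bigr)_{0,k}$; the condition $g_l,h_l\in\ell^2(\mathbb{S})$ is already built into these matrix entries, so no characteristic factors are needed. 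Moving the sum over $l$ inside the parentheses then yields the claimed identity. The only step requiring genuine care is this last index bookkeeping — getting the reflection $k\mapsto -k$ and the shift between the arguments of $f$ and $\phi$ to line up with the row/column convention for $\mathcal{M}_{(\cdot)}(\cdot)$ — everything else being the routine rearrangement already carried out for Lemma~\ref{lem4}.
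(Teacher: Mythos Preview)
Your argument is correct. It follows the template of the paper's own proof of Lemma~\ref{lem4} (direct expansion over $\mathbb{S}$, then Lemma~\ref{lem1} on the $m$-sum, then relabel), whereas the paper's proof of Lemma~\ref{lem4'} packages things differently: it periodizes each inner product over $\mathbb{N}_M$, recognizes the two factors as Fourier coefficients of auxiliary $M$-periodic sequences $F_n,H_n$, and invokes Lemma~\ref{lem1'} (Parseval for the finite Fourier basis) to collapse the $m$-sum to $M\langle F_n,H_n\rangle$ before unwinding. The underlying mechanism is identical --- Lemma~\ref{lem1} and Lemma~\ref{lem1'} are two faces of the same orthogonality --- and your direct route is arguably cleaner since it avoids introducing and then dismantling $F_n,H_n$. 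One small imprecision worth tightening: you assert that ``every sum occurring below has only finitely many nonzero terms,'' but the $n$-sum need not be finite when $g_l,h_l$ have infinite support. It does converge absolutely by Cauchy--Schwarz, since $\{g_l(j-nN)\}_n$ and $\{h_l(p-nN)\}_n$ lie in $\ell^2$, and with all other sums genuinely finite this is enough for the rearrangements; the paper glosses over the same point in its proof of Lemma~\ref{lem4}.
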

\begin{proof}
	Let $f,\phi\in \ell_0(\mathbb{S})$. We have: 
	$$\begin{array}{rcl}
	&&\displaystyle{\sum_{l\in \mathbb{N}_L}\sum_{n\in \mathbb{Z}}\sum_{m\in \mathbb{N}_M}\langle f,E_{\frac{m}{M}}T_{nN}g_l\rangle\,\langle E_{\frac{m}{M}}T_{nN}h_l,\phi\rangle}\\
	&=&\displaystyle{\sum_{l\in \mathbb{N}_L}\sum_{n\in \mathbb{Z}}\sum_{m\in \mathbb{N}_M}\left(\sum_{j\in \mathbb{N}_M}\sum_{k\in \mathbb{Z}}f(j+kM)\overline{g_l(j+kM-nN)}e^{-2\pi i\frac{m}{M}j}\right)}\\
	&\,\times\,&\displaystyle{\left(\sum_{j\in \mathbb{N}_M}\sum_{k\in \mathbb{Z}}e^{2\pi i\frac{m}{M}j}h_l(j+kM-nN)\overline{\phi(j+kM)}\right) }\\
	&=& \displaystyle{\sum_{l\in \mathbb{N}_L}\sum_{n\in \mathbb{Z}}\sum_{m\in \mathbb{N}_M}\left(\sum_{j\in \mathbb{N}_M}F_n(j)e^{-2\pi i\frac{m}{M}j}\right)}
	\,\times\,\displaystyle{\left(\sum_{j\in \mathbb{N}_M}e^{2\pi i\frac{m}{M}j}\,\overline{H_n(j)}\right) }\\
	&=& \displaystyle{\sum_{l\in \mathbb{N}_L}\sum_{n\in \mathbb{Z}}\sum_{m\in \mathbb{N}_M}\, \langle F_n,e^{-2\pi i\frac{m}{M}.}\rangle}
	\,\displaystyle{\langle e^{2\pi i\frac{m}{M}.},H_n\rangle}\\
    &=& M\displaystyle{\sum_{l\in \mathbb{N}_L}\sum_{n\in \mathbb{Z}}\, \langle F_n,H_n\rangle} \;\;\;\;(\text{ lemma }\ref{lem1'})\\
  &=&M\displaystyle{\sum_{l\in \mathbb{N}_L}\sum_{n\in \mathbb{Z}}\sum_{j\in \mathbb{N}_M} F_n(j)\overline{H_n(j)}}\\
&=& M\displaystyle{\sum_{l\in \mathbb{N}_L}\sum_{n\in \mathbb{Z}}\sum_{j\in \mathbb{N}_M} F_n(j)\, \sum_{k\in \mathbb{Z}} h_l(j+kM-nN)\overline{\phi(j+kM)}}\\
&=& M\displaystyle{\sum_{l\in \mathbb{N}_L}\sum_{n\in \mathbb{Z}}\sum_{j\in \mathbb{N}_M}\sum_{k\in \mathbb{Z}} F_n(j+kM)  h_l(j+kM-nN)\overline{\phi(j+kM)}}\; \;( F_n \text{ is M-periodic})\\
&=& M\displaystyle{\sum_{l\in \mathbb{N}_L}\sum_{n\in \mathbb{Z}}\sum_{j\in \mathbb{Z}}F_n(j)  h_l(j-nN)\overline{\phi(j)}}\\
&=& M\displaystyle{\sum_{l\in \mathbb{N}_L}\sum_{n\in \mathbb{Z}}\sum_{j\in \mathbb{S}}F_n(j)  h_l(j-nN)\overline{\phi(j)}}\;\;\; \text{ since }\phi\in \ell_0(\mathbb{S})\\
\end{array}$$
\newpage
$$\begin{array}{rcl}
&=&M\displaystyle{\sum_{l\in \mathbb{N}_L}\sum_{n\in \mathbb{Z}}\sum_{j\in \mathbb{Z}} \sum_{k\in \mathbb{Z}}f(j+kM)\overline{g_l(j+kM-nN)}\,   h_l(j-nN)\overline{\phi(j)}}\\
&=&M\displaystyle{\sum_{l\in \mathbb{N}_L}\sum_{j\in \mathbb{Z}} \sum_{k\in \mathbb{Z}}\left(\sum_{n\in \mathbb{Z}}h_l(j-nN)\overline{g_l(j+kM-nN)}\right) f(j+kM)\, \overline{\phi(j)}}\\
&=&M\displaystyle{\sum_{l\in \mathbb{N}_L}\sum_{j\in \mathbb{Z}} \sum_{k\in \mathbb{Z}}\left(\mathcal{M}_{h_l}(j)\mathcal{M}_{g_l}^*(j)\right)_{0,k} f(j+kM)\, \overline{\phi(j)}}\\
&=&M\displaystyle{\sum_{j\in \mathbb{Z}} \sum_{k\in \mathbb{Z}}\left(\sum_{l\in \mathbb{N}_L}\mathcal{M}_{h_l}(j)\mathcal{M}_{g_l}^*(j)\right)_{0,k} f(j+kM)\, \overline{\phi(j)}}.\\
	\end{array}$$
Where $$F_n(.):=\displaystyle{\sum_{k\in \mathbb{Z}}f(.+kM)\,\overline{g_l(.+kM-nN)}},\;H_n(.):=\displaystyle{\sum_{k\in \mathbb{Z}}\phi(.+kM)\,\overline{h_l(.+kM-nN)}}$$ and we could change the summation order since $f,\phi\in \ell_0(\mathbb{S})$.
	\end{proof}	
	
	\begin{lemma}\label{lem5}
	For all $\phi,\psi \in \ell^2(\mathbb{S})$, $\mathcal{M}_{\phi}(.)\mathcal{M}_{\psi}^*(.)$ is $N$-periodic. i.e: For all $j\in \mathbb{S}$ and $n\in \mathbb{Z}$, $\mathcal{M}_{\phi}(j)\mathcal{M}_{\phi}^*(j)=\mathcal{M}_{\phi}(j+nN)\mathcal{M}_{\psi}^*(j+nN)$.
	\end{lemma}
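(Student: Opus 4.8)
The plan is to unwind the definitions and reduce the $N$-periodicity of the matrix product $\mathcal{M}_{\phi}(\cdot)\mathcal{M}_{\psi}^*(\cdot)$ to the $N\mathbb{Z}$-periodicity of the set $\mathbb{S}$. First I would write out the $(r,k)$-entry of $\mathcal{M}_{\phi}(j)\mathcal{M}_{\psi}^*(j)$ explicitly. By definition $\mathcal{M}_{\phi}(j) = \{\phi(j+kM-nN)\}_{k,n\in\mathbb{Z}}$ and $\mathcal{M}_{\psi}^*(j) = \{\overline{\psi(j+kM-nN)}\}_{n,k\in\mathbb{Z}}$, so the matrix multiplication sums over the common index (the translation index $n$), giving
\[
\left(\mathcal{M}_{\phi}(j)\mathcal{M}_{\psi}^*(j)\right)_{r,k} = \sum_{n\in\mathbb{Z}} \phi(j+rM-nN)\,\overline{\psi(j+kM-nN)}.
\]

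Next I would replace $j$ by $j+pN$ for an arbitrary $p\in\mathbb{Z}$ and compute
\[
\left(\mathcal{M}_{\phi}(j+pN)\mathcal{M}_{\psi}^*(j+pN)\right)_{r,k} = \sum_{n\in\mathbb{Z}} \phi(j+pN+rM-nN)\,\overline{\psi(j+pN+kM-nN)}.
\]
The key step is the change of summation variable $n \mapsto n+p$: since $n$ ranges over all of $\mathbb{Z}$, this reindexing is a bijection and leaves the sum unchanged, turning $j+pN+rM-nN$ into $j+rM-nN$ and likewise for the $\psi$ factor. Hence the $(r,k)$-entry at $j+pN$ equals the $(r,k)$-entry at $j$, which is exactly the claimed identity $\mathcal{M}_{\phi}(j+nN)\mathcal{M}_{\psi}^*(j+nN) = \mathcal{M}_{\phi}(j)\mathcal{M}_{\psi}^*(j)$ (with $n$ playing the role of $p$).

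There is essentially no obstacle here: the only point requiring a word of care is the justification that the reindexing of the bi-infinite sum is legitimate, i.e. that the series $\sum_{n\in\mathbb{Z}}\phi(j+rM-nN)\overline{\psi(j+kM-nN)}$ converges absolutely so that shifting the index is valid. This follows from Cauchy--Schwarz, since for fixed $j,r,k$ the sequences $n\mapsto \phi(j+rM-nN)$ and $n\mapsto\psi(j+kM-nN)$ are subsequences of the $\ell^2(\mathbb{Z})$ sequences $\phi$ and $\psi$, hence belong to $\ell^2(\mathbb{Z})$; this also shows the matrix entries are all well defined and finite, so $\mathcal{M}_{\phi}(j)\mathcal{M}_{\psi}^*(j)$ makes sense as claimed. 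One should also note that the statement as literally typed in the lemma has a minor typo ($\mathcal{M}_{\phi}^*$ in place of $\mathcal{M}_{\psi}^*$ on the left-hand side), but the intended assertion is the periodicity of $\mathcal{M}_{\phi}(\cdot)\mathcal{M}_{\psi}^*(\cdot)$, and the argument above establishes exactly that. I would also remark that, by the same reindexing applied only on one side, one cannot conclude that $\mathcal{M}_{\phi}(\cdot)$ itself is $N$-periodic — periodicity genuinely requires the product structure that pairs the two translation indices — which is why the lemma is phrased for the product rather than for the individual matrices.
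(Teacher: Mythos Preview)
Your proof is correct and follows essentially the same approach as the paper: both arguments write out the generic entry of the matrix product explicitly and then apply a shift reindexing of the summation variable over $\mathbb{Z}$ to conclude. Your added remarks on absolute convergence via Cauchy--Schwarz and on the typo in the statement are welcome refinements, but the core argument is identical.
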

	\begin{proof}
	Let $j\in \mathbb{S}$,  $n,k,k'\in \mathbb{Z}$, we have:
	$$\begin{array}{rcl}
	\left(\mathcal{M}_{\phi}(j+nN)\mathcal{M}_{\psi}^*(j+nN)\right)_{k,k'}&=&\displaystyle{\sum_{p\in \mathbb{Z}}\phi(j+nN+kM-pN)\overline{\psi(j+nN+k'M-pN)}}\\
	&=&\displaystyle{\sum_{p\in \mathbb{Z}}\phi(j+kM+(n-p)N)\overline{\psi(j+k'M+(n-p)N)}}\\
	&=& \displaystyle{\sum_{q\in \mathbb{Z}}\phi(j+kM-qN)\overline{\psi(j+k'M-qN)}}\;\;\; (q=n-p)\\
	&=&\left(\mathcal{M}_{\phi}(j)\mathcal{M}_{\psi}^*(j)\right)_{k,k'}.
	\end{array}$$
	\end{proof}
	\begin{lemma}\label{lem6}\cite{4}
	Let $M\in \mathbb{N}$. Assume that $g\in \ell^2(\mathbb{Z})$ such that $z_Mg$ is continuous. 
	\begin{enumerate}
	\item If $g$ is odd, then $z_Mg(0,0)=z_Mg(0,\displaystyle{\frac{1}{2}})=0.$ Moreover, if $M$ is even, then $z_Mg(\displaystyle{\frac{M}{2}},0)=0.$
	\item If $g$ is even and $M$ is even, then $z_Mg(\displaystyle{\frac{M}{2}},\displaystyle{\frac{M}{2}})=0$.
	\end{enumerate}
	\end{lemma}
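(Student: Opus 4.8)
The plan is to exploit three elementary structural identities of the discrete Zak transform and then read off the claimed vanishings from a short sign argument.

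First I would record, for an arbitrary $g\in\ell^2(\mathbb{Z})$, the two quasi-periodicity relations that follow immediately from the defining series $z_Mg(j,\theta)=\sum_{k\in\mathbb{Z}}g(j+kM)e^{2\pi i k\theta}$ by reindexing the sum: periodicity in the second variable, $z_Mg(j,\theta+1)=z_Mg(j,\theta)$, and quasi-periodicity in the first variable, $z_Mg(j+M,\theta)=e^{-2\pi i\theta}z_Mg(j,\theta)$. Next I would bring in the parity hypothesis: replacing $k$ by $-k$ in the series gives $z_Mg(-j,-\theta)=\sum_{k\in\mathbb{Z}}g\bigl(-(j+kM)\bigr)e^{2\pi i k\theta}$, which equals $-z_Mg(j,\theta)$ when $g$ is odd and $+z_Mg(j,\theta)$ when $g$ is even. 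As for the pointwise meaning of these formulas: for each fixed $j$ the sequence $k\mapsto g(j+kM)$ lies in $\ell^2(\mathbb{Z})$, so the series defines an element of $L^2([0,1))$; the hypothesis that $z_Mg$ is continuous means this function has a genuine continuous representative, so the three identities above, a priori valid in the $L^2$ sense, hold pointwise for every $(j,\theta)$ and the evaluations made below are legitimate.

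For item (1), assume $g$ is odd. Evaluating the parity relation at $(0,0)$ gives $z_Mg(0,0)=-z_Mg(0,0)$, hence $z_Mg(0,0)=0$. Evaluating it at $(0,\frac{1}{2})$ gives $z_Mg(0,-\frac{1}{2})=-z_Mg(0,\frac{1}{2})$, and since periodicity in $\theta$ yields $z_Mg(0,-\frac{1}{2})=z_Mg(0,\frac{1}{2})$, we obtain $z_Mg(0,\frac{1}{2})=0$. If moreover $M$ is even, then $\frac{M}{2}\in\mathbb{Z}$; quasi-periodicity in $j$ at $\theta=0$ gives $z_Mg(\frac{M}{2},0)=z_Mg(-\frac{M}{2},0)$, while the parity relation at $(\frac{M}{2},0)$ gives $z_Mg(-\frac{M}{2},0)=-z_Mg(\frac{M}{2},0)$, so $z_Mg(\frac{M}{2},0)=0$.

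For item (2), assume $g$ and $M$ are both even. At the point $(\frac{M}{2},\frac{1}{2})$ the (even) parity relation gives $z_Mg(-\frac{M}{2},-\frac{1}{2})=z_Mg(\frac{M}{2},\frac{1}{2})$; periodicity in $\theta$ turns the left-hand side into $z_Mg(-\frac{M}{2},\frac{1}{2})$, while quasi-periodicity in $j$ at $\theta=\frac{1}{2}$ gives $z_Mg(\frac{M}{2},\frac{1}{2})=e^{-\pi i}z_Mg(-\frac{M}{2},\frac{1}{2})=-z_Mg(-\frac{M}{2},\frac{1}{2})$. Combining, $z_Mg(\frac{M}{2},\frac{1}{2})=-z_Mg(\frac{M}{2},\frac{1}{2})$, hence it is $0$. (I read the printed $z_Mg(\frac{M}{2},\frac{M}{2})$ as the point $(\frac{M}{2},\frac{1}{2})$.) I do not expect a genuine obstacle here; the only delicate point is the passage from almost-everywhere identities to honest pointwise evaluations at the special points, which is exactly what the continuity hypothesis on $z_Mg$ is there to supply.
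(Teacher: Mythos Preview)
Your argument is correct: the three identities (periodicity in $\theta$, quasi-periodicity in $j$, and the parity relation $z_Mg(-j,-\theta)=\pm z_Mg(j,\theta)$) are exactly what one needs, and your evaluations at the special points are clean and accurate. You are also right to flag the apparent typo in the statement of item (2): the intended point is $(\frac{M}{2},\frac{1}{2})$, not $(\frac{M}{2},\frac{M}{2})$, since the second coordinate of $z_Mg$ is a real variable taken modulo $1$.

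As for comparison with the paper: there is nothing to compare against. The lemma carries the citation \cite{4} and no proof is given in the paper itself; it is quoted as a known result from Heil's work on the discrete Zak transform. Your write-up is therefore a self-contained verification of a result the paper imports, and it is the standard proof one would expect.
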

	
	\begin{lemma}\label{lem7}\cite{4}
	Let $M\in \mathbb{N}$.\\
If $g\in \ell^1(\mathbb{Z})$, then for all 
$j\in \mathbb{Z}$, $z_Mg(j,.)$ is continuous on $\mathbb{R}$.
		\end{lemma}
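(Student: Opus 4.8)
The plan is to show that the series defining $z_Mg(j,\cdot)$ converges uniformly on $\mathbb{R}$ and then to invoke the classical fact that a uniform limit of continuous functions is continuous.

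First I would fix $j\in\mathbb{Z}$ and, for $K\in\mathbb{N}$, consider the partial sums
$S_K(\theta):=\sum_{|k|\le K}g(j+kM)e^{2\pi i k\theta}$. Each $S_K$ is a finite linear combination of the functions $\theta\mapsto e^{2\pi i k\theta}$, hence continuous on $\mathbb{R}$.

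Next I would produce a summable majorant independent of $\theta$. Since $\bigl|g(j+kM)e^{2\pi i k\theta}\bigr|=|g(j+kM)|$ for every $\theta\in\mathbb{R}$, and since the map $k\mapsto j+kM$ is injective from $\mathbb{Z}$ into $\mathbb{Z}$, we get $\sum_{k\in\mathbb{Z}}|g(j+kM)|\le\sum_{n\in\mathbb{Z}}|g(n)|=\|g\|_{\ell^1(\mathbb{Z})}<\infty$. By the Weierstrass $M$-test, the series $\sum_{k\in\mathbb{Z}}g(j+kM)e^{2\pi i k\theta}$ converges absolutely and uniformly on $\mathbb{R}$; in particular $S_K\to z_Mg(j,\cdot)$ uniformly on $\mathbb{R}$ as $K\to\infty$. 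A uniform limit of continuous functions being continuous, $z_Mg(j,\cdot)$ is continuous on $\mathbb{R}$, as claimed.

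There is essentially no obstacle here: the only point requiring a moment's care is the passage from $g\in\ell^1(\mathbb{Z})$ to the $\theta$-independent summable bound on the $k$-series, which rests solely on the injectivity of $k\mapsto j+kM$. Note also that continuity holds on all of $\mathbb{R}$, not merely on $[0,1[$, since the majorant did not depend on $\theta$.
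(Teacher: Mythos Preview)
Your proof is correct and is the standard argument: the Weierstrass $M$-test gives uniform convergence of the defining series, and continuity follows. The paper itself does not supply a proof of this lemma; it is stated with a citation to \cite{4} and used as a black box, so there is no in-paper argument to compare against.
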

	We finish this section with a famous theorem, called Douglas theorem, which is very useful in $K$-frame theory.
	\begin{theorem}\label{thm3}\cite{2}
	Let $U,V\in B(H)$ where $H$ is a separable Hilbert space. The following statements are equivalent: 
	\begin{enumerate}
	\item $R(U)\subset R(V)$.
	\item There exists $c> 0$ such that $UU^* \leq c VV^*.$
	\item There exists $X\in B(H)$ such that $U=VX$.
	\end{enumerate}
	\end{theorem}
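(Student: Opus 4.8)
The plan is to prove the cycle $(3)\Rightarrow(2)$, $(3)\Rightarrow(1)$, $(2)\Rightarrow(3)$ and $(1)\Rightarrow(3)$, which together yield all the equivalences. The two implications out of $(3)$ are purely algebraic and I would dispatch them first: if $U=VX$ with $X\in B(H)$, then $R(U)=R(VX)\subseteq R(V)$, so $(1)$ holds; and from $XX^*\leq\|X\|^2 I_H$ one gets $UU^*=VXX^*V^*\leq\|X\|^2\,VV^*$, so $(2)$ holds with $c=\|X\|^2$.

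For $(2)\Rightarrow(3)$ the idea is to build $X$ first on the range of $V^*$ and then extend. The crux is the pointwise estimate: for every $h\in H$,
$$\|U^*h\|^2=\langle UU^*h,h\rangle\leq c\,\langle VV^*h,h\rangle=c\,\|V^*h\|^2 .$$
This shows at once that the assignment $V^*h\mapsto U^*h$ is a well-defined linear map $X_0$ on the subspace $R(V^*)$ (if $V^*h=V^*h'$, then applying the estimate to $h-h'$ forces $U^*h=U^*h'$) and that it is bounded there with norm at most $\sqrt c$. I would then extend $X_0$ by continuity to $\overline{R(V^*)}$ and by $0$ on the orthogonal complement $\overline{R(V^*)}^{\perp}=\ker V$, obtaining $Y\in B(H)$ with $\|Y\|\leq\sqrt c$ and $YV^*=U^*$. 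Taking adjoints gives $U=VY^*$, so $X:=Y^*$ does the job.

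The implication I expect to be the main obstacle is $(1)\Rightarrow(3)$, since it is not algebraic and must invoke the closed graph theorem. Assuming $R(U)\subseteq R(V)$, for each $h\in H$ the equation $Vg=Uh$ is solvable, hence has a unique solution $Xh$ lying in $(\ker V)^{\perp}=\overline{R(V^*)}$ (two such solutions would differ by an element of $\ker V\cap(\ker V)^{\perp}=\{0\}$). This uniqueness makes $X:H\to H$ linear and forces $VX=U$; it remains to check boundedness. If $h_n\to h$ and $Xh_n\to y$, then $VXh_n=Uh_n\to Uh$ and $VXh_n\to Vy$ by continuity of $U$ and $V$, hence $Vy=Uh$, while $y\in(\ker V)^{\perp}$ because that subspace is closed; the uniqueness property then gives $y=Xh$, so the graph of $X$ is closed and $X\in B(H)$. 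This closes the cycle and completes the argument.
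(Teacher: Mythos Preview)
Your proof is correct and follows the standard argument for Douglas's factorization theorem. The paper itself does not prove this statement; it is quoted as a known result with the citation \cite{2} and no proof is given, so there is nothing in the paper to compare your argument against.
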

	As a result of the Douglas theorem, L. Gavruta gives  the following characterizations of $K$-frames in $H$.
	\begin{lemma}\label{Gavr1}\cite{3}
	Let $\{x_i\}_{i\in \mathcal{I}}$ be a Bessel sequence in $H$ whose the synthesis operator is $U$ and let $K\in B(H)$. Then:
	\begin{enumerate}
	\item $\{x_i\}_{i\in \mathcal{I}}$ is a $K$-frame $\;\;\Longleftrightarrow$ $R(K)\subset R(U)$.
	\item $\{x_i\}_{i\in \mathcal{I}}$ is a tight $K$-frame $\;\;\Longleftrightarrow$ $R(K)=R(U)$.
	\end{enumerate}
	\end{lemma}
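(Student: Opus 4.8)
The plan is to translate the defining inequalities of a $K$-frame into an operator inequality relating $UU^{*}$ and $KK^{*}$, and then to read off the range conditions directly from Douglas' theorem (Theorem~\ref{thm3}). Since $\{x_i\}_{i\in\mathcal I}$ is a Bessel sequence, $U$ is bounded and $U^{*}$ is exactly its analysis operator, so that $\sum_{i\in\mathcal I}|\langle f,x_i\rangle|^{2}=\|U^{*}f\|^{2}=\langle UU^{*}f,f\rangle$ for every $f\in H$, while $\|K^{*}f\|^{2}=\langle KK^{*}f,f\rangle$. Hence the upper $K$-frame inequality is nothing but the Bessel bound, so it holds automatically, and the existence of a lower bound $A>0$ with $A\|K^{*}f\|^{2}\le\sum_{i\in\mathcal I}|\langle f,x_i\rangle|^{2}$ for all $f$ is equivalent to the operator inequality $A\,KK^{*}\le UU^{*}$, that is, to $KK^{*}\le c\,UU^{*}$ for some $c>0$.

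For part~(1) I would then invoke Theorem~\ref{thm3} with the operators $K$ and $U$: the condition ``$KK^{*}\le c\,UU^{*}$ for some $c>0$'' is precisely its clause~(2), equivalent to clause~(1), namely $R(K)\subseteq R(U)$. The converse runs the same equivalence backwards: if $R(K)\subseteq R(U)$, then Theorem~\ref{thm3} produces $c>0$ with $KK^{*}\le c\,UU^{*}$, whence $\tfrac1c\|K^{*}f\|^{2}\le\sum_{i\in\mathcal I}|\langle f,x_i\rangle|^{2}$, and combined with the Bessel bound this exhibits $\{x_i\}_{i\in\mathcal I}$ as a $K$-frame.

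For part~(2), observe that a tight $K$-frame with constant $A$ is, by the first paragraph, exactly the \emph{equality} $A\,KK^{*}=UU^{*}$. From this I would read off simultaneously $KK^{*}\le\tfrac1A\,UU^{*}$ and $UU^{*}\le A\,KK^{*}$, so two applications of Theorem~\ref{thm3} give $R(K)\subseteq R(U)$ and $R(U)\subseteq R(K)$, i.e. $R(K)=R(U)$; that settles the forward implication. The reverse implication in~(2) is the step I expect to be the main obstacle: the set equality $R(K)=R(U)$ yields, through Douglas' theorem, only the two-sided estimate $c_{1}KK^{*}\le UU^{*}\le c_{2}KK^{*}$, and upgrading this to the tight identity $A\,KK^{*}=UU^{*}$ requires extra input; I would look for an additional structural hypothesis on $K$, or for the precise meaning of ``tight'' intended in \cite{3}, that forces the two constants to coincide.
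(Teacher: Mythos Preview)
The paper does not supply a proof of this lemma; it is quoted from \cite{3} without argument, so there is nothing in the paper to compare your attempt against. Your proof of part~(1) via Douglas' theorem is the standard one and is correct in both directions.

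For part~(2) your instinct is right, and in fact the difficulty you flag is not a gap in your argument but a defect in the statement as written: the reverse implication is \emph{false}. Take $H=\mathbb{C}^{2}$, $K=I_{H}$, and the two-element sequence $x_{1}=e_{1}$, $x_{2}=2e_{2}$. Its synthesis operator $U$ has $R(U)=H=R(K)$, yet $\sum_{i}|\langle f,x_{i}\rangle|^{2}=|f_{1}|^{2}+4|f_{2}|^{2}$ is not a constant multiple of $\|f\|^{2}=\|K^{*}f\|^{2}$, so the sequence is a $K$-frame but not a tight one. Thus $R(K)=R(U)$ gives only the two-sided bound $c_{1}KK^{*}\le UU^{*}\le c_{2}KK^{*}$ you derived, and nothing stronger can be extracted. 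Your forward implication for~(2) is fine; there is simply no correct proof of the converse because the converse does not hold. The author appears to have misquoted \cite{3} here, and in any case part~(2) is never used in the rest of the paper.
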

	
	\begin{lemma}\label{Gavr2}\cite{3}
Let $\{x_i\}_{i\in \mathcal{I}}$ be a Bessel sequence for $H$ and $K\in B(H)$. The following statements are equivalent:
\begin{enumerate}
\item $\{x_i\}$ is a $K$-frame.
\item There exists a Bessel sequence $\{f_i\}_{i\in \mathcal{I}}$ in $H$ such that for all $x\in H$, $$Kx=\displaystyle{\sum_{i\in \mathcal{I}}\langle x,f_i\rangle x_i}.$$
Such a Bessel sequence is called a $K$-dual frame to $\{x_i\}_{i\in \mathcal{I}}$.\\
\end{enumerate}
\end{lemma}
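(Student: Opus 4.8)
The plan is to prove the two implications separately, leaning on Lemma \ref{Gavr1} and on the Douglas factorization (Theorem \ref{thm3}) that underlies it. Throughout, write $U$ for the synthesis operator of $\{x_i\}_{i\in\mathcal{I}}$, which is bounded since the family is Bessel, and $\theta_f$ for the analysis operator of a candidate Bessel family $\{f_i\}_{i\in\mathcal{I}}$, so that $\theta_f(x)=\{\langle x,f_i\rangle\}_{i\in\mathcal{I}}$ and the map $x\mapsto \sum_{i\in\mathcal{I}}\langle x,f_i\rangle x_i$ is exactly the composition $U\theta_f\in B(H)$.

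For the implication $(2)\Rightarrow(1)$: assume such a Bessel family $\{f_i\}_{i\in\mathcal{I}}$ exists with $Kx=U\theta_f(x)$ for all $x\in H$. Then $K=U\theta_f$, so in particular $R(K)=R(U\theta_f)\subset R(U)$. By part (1) of Lemma \ref{Gavr1}, $R(K)\subset R(U)$ is equivalent to $\{x_i\}_{i\in\mathcal{I}}$ being a $K$-frame, so we are done. (Alternatively one can avoid citing Lemma \ref{Gavr1} and argue directly: the upper bound is the Bessel bound of $\{x_i\}$; for the lower bound, $K=U\theta_f$ gives $KK^*=U\theta_f\theta_f^*U^*\le \|\theta_f\|^2\,UU^*$, and Theorem \ref{thm3} $(2)\Rightarrow(1)$ then yields $R(K)\subset R(U)$, from which the $K$-frame inequality $A\|K^*x\|^2\le \sum_i|\langle x,x_i\rangle|^2$ follows by the standard Douglas argument.)

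For the implication $(1)\Rightarrow(2)$: suppose $\{x_i\}_{i\in\mathcal{I}}$ is a $K$-frame. By Lemma \ref{Gavr1}(1) this gives $R(K)\subset R(U)$, and then Theorem \ref{thm3} $(1)\Rightarrow(3)$ furnishes an operator $X\in B(H)$ with $K=UX$. Now set $f_i:=X^*e_i$ where $\{e_i\}_{i\in\mathcal{I}}$ is the canonical orthonormal basis of $\ell^2(\mathcal{I})$; equivalently, $f_i = X^* (U^* x_i$-type object)—more cleanly, observe that $\theta_f := X$ has the form of an analysis operator once we declare $f_i$ by $\langle x,f_i\rangle := (Xx)_i$ for each $i$, which defines $f_i\in H$ via the Riesz representation theorem applied to the bounded functional $x\mapsto (Xx)_i$. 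Then $\{f_i\}_{i\in\mathcal{I}}$ is Bessel with bound $\|X\|^2$ since $\sum_i|\langle x,f_i\rangle|^2=\|Xx\|^2\le\|X\|^2\|x\|^2$, and for all $x\in H$ we have $\sum_{i\in\mathcal{I}}\langle x,f_i\rangle x_i = U\theta_f(x)=UXx=Kx$, as required. The one point deserving care is the passage from the abstract factor $X$ to an honest sequence $\{f_i\}$: this is exactly the observation that every bounded operator into $\ell^2(\mathcal{I})$ is the analysis operator of a (unique) Bessel family, which is routine but should be spelled out.

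The main obstacle, such as it is, is bookkeeping rather than depth: one must be careful that the "Bessel bound $B$" in the statement of $(1)$ is the one attached to $\{x_i\}$ (not a new constant), that the existence of the factorization $K=UX$ is invoked in the correct direction of Douglas's theorem, and that the constructed $\{f_i\}$ is genuinely Bessel so that its synthesis operator is defined and the manipulation $\sum_i\langle x,f_i\rangle x_i = UXx$ is legitimate (no convergence issues, since $\theta_f(x)\in\ell^2(\mathcal{I})$ and $U$ is bounded). No delicate estimate is needed beyond these.
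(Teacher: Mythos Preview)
Your proof is correct, but note that the paper does not actually prove this lemma: it is quoted from Gavruta \cite{3} without argument, so there is no ``paper's own proof'' to compare against. Your approach via Lemma \ref{Gavr1} and Douglas factorization is the standard one and is exactly what Gavruta does.

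Two minor remarks. First, Theorem \ref{thm3} as stated in the paper is for operators in $B(H)$, whereas you apply it with $U:\ell^2(\mathcal{I})\to H$ and need a factor $X:H\to\ell^2(\mathcal{I})$; Douglas's theorem holds in this generality, but strictly speaking you are invoking a slightly stronger version than the one quoted. Second, your closing observation that every bounded operator $H\to\ell^2(\mathcal{I})$ is the analysis operator of a unique Bessel family is precisely what the paper later records as Theorem \ref{teta}, so within the paper's internal logic you could simply cite that result (or, since Theorem \ref{teta} appears after Lemma \ref{Gavr2}, note that its proof is self-contained and could be moved earlier). The parenthetical ``$f_i=X^*(U^*x_i$-type object)'' is a stray half-thought and should be deleted; the clean definition $f_i:=X^*e_i$ that you give immediately after is the right one.
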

	\section{M-D-G frames}
		In this section, we investigate M-D-G frames. We first give a matrix-characterization of frames in a particular class of  windows  and also get the explicit expression of the associated frame operator $S$ and its inverse $S^{-1}$ in terms of matrices $\mathcal{M}_{g_l}(.)\mathcal{M}_{g_l}^*(.)$.
	\begin{proposition}
	Let $g:=\{g_l\}_{l\in \mathbb{N}_L}\subset \ell^2(\mathbb{S})$ be such that $\vert supp(g_l)\vert< M$ for all $l\in \mathbb{N}_L$. Then the following statements are equivalent:
	\begin{enumerate}
	\item There exist $0< A\leq B<\infty$ such that for all $j\in \mathbb{S}\cap \mathbb{N}_N$, 
	$$A\leq M\displaystyle{\sum_{l\in \mathbb{N}_L} \left(\mathcal{M}_{g_l}(j)\mathcal{M}_{g_l}^*(j)\right)_{0,0}}\leq B.$$
	\item $\mathcal{G}(g,L,M,N)$ is a M-D-G frame for $\ell^2(\mathbb{S})$ with frame bounds $A$ and $B$.
	\end{enumerate}
In this case, we have the following explicit expressions of $S$ and $S^{-1}$: 
$$S(f)=M\displaystyle{\sum_{l\in \mathbb{N}_L} \left(\mathcal{M}_{g_l}(.)\mathcal{M}_{g_l}^*(.)\right)_{0,0}}f, \; \; S^{-1}f=\displaystyle{\frac{1}{M\displaystyle{\sum_{l\in \mathbb{N}_L} \left(\mathcal{M}_{g_l}(.)\mathcal{M}_{g_l}^*(.)\right)_{0,0}}}}f.$$
	\end{proposition}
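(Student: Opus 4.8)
The plan is to reduce both implications to the identity in Lemma~\ref{lem4}, after first exploiting the support hypothesis to annihilate every off-diagonal entry $\left(\mathcal{M}_{g_l}(j)\mathcal{M}_{g_l}^*(j)\right)_{0,k}$ with $k\neq 0$. First I would record that $\vert supp(g_l)\vert<M$ forces each $g_l$ to have finite support, so $g_l\in\ell_0(\mathbb{S})$ and $\mathcal{G}(g,L,M,N)$ is automatically a Bessel system; consequently its analysis operator is bounded, $f\mapsto\sum_{l,n,m}\vert\langle f,E_{\frac{m}{M}}T_{nN}g_l\rangle\vert^2$ is continuous on $\ell^2(\mathbb{S})$, and since $\ell_0(\mathbb{S})$ is dense it suffices to establish the two frame inequalities for $f\in\ell_0(\mathbb{S})$.

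Next comes the key computation. For $k\neq 0$,
$$\left(\mathcal{M}_{g_l}(j)\mathcal{M}_{g_l}^*(j)\right)_{0,k}=\sum_{n\in\mathbb{Z}}g_l(j-nN)\,\overline{g_l(j+kM-nN)},$$
and a summand can be nonzero only when both $j-nN$ and $j+kM-nN$ lie in $supp(g_l)$; but those two integers differ by $\vert kM\vert\geq M>\vert supp(g_l)\vert$, which is impossible. Hence $F_2(f)=0$ for all $f\in\ell_0(\mathbb{S})$, and Lemma~\ref{lem4} collapses to
$$\sum_{l\in\mathbb{N}_L}\sum_{n\in\mathbb{Z}}\sum_{m\in\mathbb{N}_M}\left\vert\langle f,E_{\frac{m}{M}}T_{nN}g_l\rangle\right\vert^2=M\sum_{j\in\mathbb{S}}\left(\sum_{l\in\mathbb{N}_L}\mathcal{M}_{g_l}(j)\mathcal{M}_{g_l}^*(j)\right)_{0,0}\vert f(j)\vert^2 .$$
For $(1)\Rightarrow(2)$ I would invoke Lemma~\ref{lem5}: the map $j\mapsto\sum_{l}\left(\mathcal{M}_{g_l}(j)\mathcal{M}_{g_l}^*(j)\right)_{0,0}$ is $N$-periodic, so since $\mathbb{S}$ is $N\mathbb{Z}$-periodic the two-sided bound in (1) propagates from $\mathbb{S}\cap\mathbb{N}_N$ to all of $\mathbb{S}$, giving $\frac{A}{M}\leq\sum_l(\,\cdots\,)_{0,0}\leq\frac{B}{M}$ on $\mathbb{S}$; inserting this into the displayed identity (recall $f$ is supported in $\mathbb{S}$) yields $A\Vert f\Vert^2\leq\sum_{l,n,m}\vert\langle f,E_{\frac{m}{M}}T_{nN}g_l\rangle\vert^2\leq B\Vert f\Vert^2$ on $\ell_0(\mathbb{S})$, hence on $\ell^2(\mathbb{S})$ by the continuity/density remark. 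The implication $(2)\Rightarrow(1)$ is immediate from Theorem~\ref{thm1}, which already gives $\frac{A}{M}\leq\sum_l\left(\mathcal{M}_{g_l}(j)\mathcal{M}_{g_l}^*(j)\right)_{0,0}\leq\frac{B}{M}$ for every $j\in\mathbb{S}$, in particular for $j\in\mathbb{S}\cap\mathbb{N}_N$.

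For the explicit expressions of $S$ and $S^{-1}$ I would apply Lemma~\ref{lem4'} with $h=g$: for $f,\phi\in\ell_0(\mathbb{S})$,
$$\langle Sf,\phi\rangle=\sum_{l,n,m}\langle f,E_{\frac{m}{M}}T_{nN}g_l\rangle\,\langle E_{\frac{m}{M}}T_{nN}g_l,\phi\rangle=M\sum_{j\in\mathbb{S}}\sum_{k\in\mathbb{Z}}\left(\sum_{l}\mathcal{M}_{g_l}(j)\mathcal{M}_{g_l}^*(j)\right)_{0,k}f(j+kM)\,\overline{\phi(j)},$$
and, the off-diagonal entries vanishing, the right-hand side equals $\big\langle M\big(\sum_l\mathcal{M}_{g_l}(\cdot)\mathcal{M}_{g_l}^*(\cdot)\big)_{0,0}\,f,\ \phi\big\rangle$. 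Since $\ell_0(\mathbb{S})$ is dense and the multiplication operator by $j\mapsto M\sum_l(\,\cdots\,)_{0,0}$ is bounded (it takes values in $[A,B]$ on $\mathbb{S}$, and its values off $\mathbb{S}$ are irrelevant when acting on $\ell^2(\mathbb{S})$), this gives $Sf=M\big(\sum_l\mathcal{M}_{g_l}(\cdot)\mathcal{M}_{g_l}^*(\cdot)\big)_{0,0}\,f$ on all of $\ell^2(\mathbb{S})$, and inverting the pointwise-positive multiplier yields the stated formula for $S^{-1}$. The only genuinely substantive step is the support argument eliminating the off-diagonal entries; the rest is bookkeeping — the periodicity extension from $\mathbb{N}_N$ to $\mathbb{S}$, the density passage from $\ell_0(\mathbb{S})$ to $\ell^2(\mathbb{S})$, and the handling of the multiplier outside $\mathbb{S}$ — and I do not expect any of it to be an obstacle.
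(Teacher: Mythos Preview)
Your proposal is correct and follows essentially the same approach as the paper: the support argument to kill the off-diagonal entries, Lemma~\ref{lem4} for the collapsed identity, Lemma~\ref{lem5} for the periodicity extension, Theorem~\ref{thm1} for the converse, and density to pass to $\ell^2(\mathbb{S})$. The only minor variation is in identifying $S$: the paper computes the quadratic form $\langle Sf,f\rangle=F_1(f)$ via Lemma~\ref{lem4} and then invokes self-adjointness of the multiplication operator, whereas you go through the full sesquilinear form $\langle Sf,\phi\rangle$ via Lemma~\ref{lem4'}; both routes are equally valid and equally short.
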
 
	
	\begin{proof}\hspace{1cm}
\begin{enumerate}
\item[]$(1)\Longrightarrow(2)$: Assume that there exist $0< A\leq B<\infty$ such that for all $j\in \mathbb{S}\cap \mathbb{N}_N$, 
	$$A\leq M\displaystyle{\sum_{l\in \mathbb{N}_L} \left(\mathcal{M}_{g_l}(j)\mathcal{M}_{g_l}^*(j)\right)_{0,0}}\leq B.$$
	By the lemma \ref{lem5}, we have for all $j\in \mathbb{S}$, $$A\leq M\displaystyle{\sum_{l\in \mathbb{N}_L} \left(\mathcal{M}_{g_l}(j)\mathcal{M}_{g_l}^*(j)\right)_{0,0}}\leq B.$$
	Let $f\in \ell_0(\mathbb{S})$. Since for all $l\in \mathbb{N}_L$, $\vert supp(g_l)\vert< M$, then for all $l\in \mathbb{N}_L$ and $k\in \mathbb{Z}-\{0\}$, $\left(\mathcal{M}_{g_l}(j)\mathcal{M}_{g_l}^*(j)\right)_{0,k}=0$. Then $F_2(f)=0$. Therefore: $$\begin{array}{rcl}

\displaystyle{\sum_{l\in \mathbb{N}_L}\sum_{n\in \mathbb{Z}}\sum_{m\in \mathbb{N}_M}\vert \langle f,E_{\frac{m}{M}}T_{nN}g_l\rangle \vert^2}&=&F_1(f)=M\displaystyle{\sum_{l\in \mathbb{N}_L}\sum_{j\in \mathbb{S}} \left(\mathcal{M}_{g_l}(j)\mathcal{M}_{g_l}^*(j)\right)_{0,0}\vert f(j)\vert^2}\\ 
&=& \displaystyle{\sum_{j\in \mathbb{S}} \; \left( M\,\sum_{l\in \mathbb{N}_L}\left(\mathcal{M}_{g_l}(j)\mathcal{M}_{g_l}^*(j)\right)_{0,0}\right)\vert f(j)\vert^2}\\ 
\end{array}$$
Then for all $f\in \ell_0(\mathbb{S})$, we have: 
$$ A\|f\|^2\leq \displaystyle{\sum_{l\in \mathbb{N}_L}\sum_{n\in \mathbb{Z}}\sum_{m\in \mathbb{N}_M}\vert \langle f,E_{\frac{m}{M}}T_{nN}g_l\rangle \vert^2}\leq B\|f\|^2.$$
By density of $\ell_0(\mathbb{S})$ in $\ell^2(\mathbb{S})$, we deduce that $\mathcal{G}(g,L,M,N)$ is a M-D-G frame with frame bounds $A$ and $B$.
\item[]$(2)\Longrightarrow (1)$: Theorem \ref{thm1}.
\end{enumerate}
Assume that one of these statements holds. Then for all $f\in \ell_0(\mathbb{S})$, we have: 
$$\begin{array}{rcl}
\langle Sf,f\rangle &=&F_1(f)\\
&=& M\displaystyle{\sum_{l\in \mathbb{N}_L}\sum_{j\in \mathbb{Z}} \left(\mathcal{M}_{g_l}(j)\mathcal{M}_{g_l}^*(j)\right)_{0,0} \vert f(j)\vert^2}\\
&=& \displaystyle{\sum_{j\in \mathbb{Z}} M\sum_{l\in \mathbb{N}_L}\left(\mathcal{M}_{g_l}(j)\mathcal{M}_{g_l}^*(j)\right)_{0,0} f(j) \,\overline{f(j)}}\\
&=& \langle \displaystyle{M\sum_{l\in \mathbb{N}_L}\left(\mathcal{M}_{g_l}(.)\mathcal{M}_{g_l}^*(.)\right)_{0,0} f}, f\rangle.
\end{array}$$
By density of $\ell_0(\mathbb{S})$ in $\ell^2(\mathbb{S})$ and since $f\mapsto  \displaystyle{M\sum_{l\in \mathbb{N}_L}\left(\mathcal{M}_{g_l}(.)\mathcal{M}_{g_l}^*(.)\right)_{0,0} f}\;\;$ is bounded and self-adjoint, then $$Sf=\displaystyle{M\sum_{l\in \mathbb{N}_L}\left(\mathcal{M}_{g_l}(.)\mathcal{M}_{g_l}^*(.)\right)_{0,0} f}.$$
And so $$S^{-1}f=\displaystyle{\frac{1}{M\displaystyle{\sum_{l\in \mathbb{N}_L} \left(\mathcal{M}_{g_l}(.)\mathcal{M}_{g_l}^*(.)\right)_{0,0}}}}f.$$
    \end{proof}
    
We have the following necessary condition on the parameters $M,N$ and $L$ for a M-D-G system in $\ell^2(\mathbb{S})$ to be frame. We also characterize which M-D-G frames are M-D-G Riesz bases.
\begin{theorem} \label{Rieszbasis}\hspace{1cm}
\begin{enumerate}
\item  A M-D-G system $\mathcal{G}(g,L,M,N)$ is a frame for $\ell^2(\mathbb{S})$ only when $\displaystyle{\frac{card(\mathbb{S}_N)}{M}}\leq L$.
\item  Assume that $\mathcal{G}(g,L,M,N)$ is a frame for $\ell^2(\mathbb{S})$. Then following statements are equivalent: 
\begin{enumerate}
\item $\mathcal{G}(g,L,M,N)$ is a M-D-G Riesz basis (M-D-G exact frame) for $\ell^2(\mathbb{S})$.
\item $\displaystyle{\frac{card(\mathbb{S}_N)}{M}}=L$.
\end{enumerate}
\end{enumerate}
\end{theorem}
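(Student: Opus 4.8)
The plan is to reduce everything to the \emph{canonical Parseval frame} of $\mathcal{G}(g,L,M,N)$, which turns out to again be an M-D-G system, and then run a simple norm count; this uses only Lemma~\ref{lem2}, Lemma~\ref{lem3'}, Theorem~\ref{thm1} and elementary bookkeeping. First I would make the Parseval reduction. Assume $\mathcal{G}(g,L,M,N)$ is a frame for $\ell^{2}(\mathbb{S})$ and let $S$ be its (positive, invertible) frame operator. By Lemma~\ref{lem2} applied with $h=g$, so that $S_{g,g}=S$, the operator $S$ commutes with every $E_{\frac{m}{M}}T_{nN}$; since the spectrum of $S$ is a compact subset of $(0,\infty)$, $S^{-1/2}$ is a norm limit of polynomials in $S$ and hence also commutes with every $E_{\frac{m}{M}}T_{nN}$. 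Setting $h_{l}:=S^{-1/2}g_{l}\in\ell^{2}(\mathbb{S})$, one gets $E_{\frac{m}{M}}T_{nN}h_{l}=S^{-1/2}\big(E_{\frac{m}{M}}T_{nN}g_{l}\big)$, so $\mathcal{G}(h,L,M,N)=S^{-1/2}\big(\mathcal{G}(g,L,M,N)\big)$ is the canonical Parseval frame of $\mathcal{G}(g,L,M,N)$; in particular it is a Parseval M-D-G frame for $\ell^{2}(\mathbb{S})$, and since $S^{-1/2}$ is bounded with bounded inverse, $\mathcal{G}(g,L,M,N)$ is a Riesz basis (exact frame) for $\ell^{2}(\mathbb{S})$ if and only if $\mathcal{G}(h,L,M,N)$ is.

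Next I would extract two facts about the Parseval frame $\mathcal{G}(h,L,M,N)$. Applying Theorem~\ref{thm1} with $A=B=1$ gives $\sum_{l\in\mathbb{N}_{L}}\big(\mathcal{M}_{h_{l}}(j)\mathcal{M}_{h_{l}}^{*}(j)\big)_{0,0}=\tfrac1M$ for every $j\in\mathbb{S}$. Using $\big(\mathcal{M}_{h_{l}}(j)\mathcal{M}_{h_{l}}^{*}(j)\big)_{0,0}=\sum_{n\in\mathbb{Z}}|h_{l}(j-nN)|^{2}$ and the disjoint decomposition $\mathbb{S}=\bigcup_{n\in\mathbb{Z}}(\mathbb{S}_{N}-nN)$, I would sum this identity over $j\in\mathbb{S}_{N}$ and interchange the (nonnegative) sums to obtain
$$\sum_{l\in\mathbb{N}_{L}}\|h_{l}\|^{2}=\frac{card(\mathbb{S}_{N})}{M}.$$
Separately, every element of a Parseval frame $\{f_{i}\}$ satisfies $\|f_{i}\|^{4}=|\langle f_{i},f_{i}\rangle|^{2}\le\sum_{j}|\langle f_{i},f_{j}\rangle|^{2}=\|f_{i}\|^{2}$, hence $\|f_{i}\|\le1$; since modulations and translations are unitary, this means $\|h_{l}\|\le1$ for every $l\in\mathbb{N}_{L}$.

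The conclusion is then immediate. From the two facts, $\dfrac{card(\mathbb{S}_{N})}{M}=\sum_{l\in\mathbb{N}_{L}}\|h_{l}\|^{2}\le\sum_{l\in\mathbb{N}_{L}}1=L$, which is assertion (1). Moreover $\dfrac{card(\mathbb{S}_{N})}{M}=L$ holds if and only if $\|h_{l}\|=1$ for every $l$, if and only if $\|x\|=1$ for every $x\in\mathcal{G}(h,L,M,N)$ (the norm of $E_{\frac{m}{M}}T_{nN}h_{l}$ not depending on $m,n$), if and only if — by Lemma~\ref{lem3'} applied to the Parseval frame $\mathcal{G}(h,L,M,N)$ — $\mathcal{G}(h,L,M,N)$ is an orthonormal basis, equivalently a Riesz basis, equivalently (Step~1) $\mathcal{G}(g,L,M,N)$ is a Riesz basis; this is $(a)\Leftrightarrow(b)$ in (2).

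The proof is routine once the Parseval reduction is in place; the only points needing a word of care are the commutation of $S^{-1/2}$ with the time–frequency shifts (continuous functional calculus, the spectrum of $S$ being compact in $(0,\infty)$, building on Lemma~\ref{lem2}) and the interchange of summations in the middle step (legitimate because all summands are $\ge0$) — no analytic subtlety arises. As an alternative, assertion (1) alone can also be read off from a fiberization of $\ell^{2}(\mathbb{S})$ with respect to the translation $T_{nN}$, under which $\ell^{2}(\mathbb{S})\cong L^{2}\big([0,1),\mathbb{C}^{card(\mathbb{S}_{N})}\big)$ and $S$ becomes a measurable field of positive $card(\mathbb{S}_{N})\times card(\mathbb{S}_{N})$ matrices of the form $W(\theta)W(\theta)^{*}$ with $W(\theta)$ of size $card(\mathbb{S}_{N})\times LM$, so that positivity forces $LM\ge card(\mathbb{S}_{N})$; but the Parseval reduction above is more economical and simultaneously settles (2).
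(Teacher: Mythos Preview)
Your proof is correct and follows essentially the same route as the paper: pass to the canonical Parseval M-D-G frame $\mathcal{G}(S^{-1/2}g,L,M,N)$ via Lemma~\ref{lem2}, use Theorem~\ref{thm1} with $A=B=1$ and sum over $\mathbb{S}_N$ to obtain $\sum_{l}\|S^{-1/2}g_l\|^2=\tfrac{card(\mathbb{S}_N)}{M}$, then combine the bound $\|S^{-1/2}g_l\|\le 1$ with Lemma~\ref{lem3'}. Your write-up is in fact slightly more careful than the paper's in justifying the commutation of $S^{-1/2}$ with the time--frequency shifts and the Parseval norm bound, and the fiberization remark at the end is a nice complement, but the core argument is identical.
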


\begin{proof} Assume that $\mathcal{G}(g,L,M,N)$ is a M-D-G frame for $\ell^2(\mathbb{S})$. Then, by lemma \ref{lem2}, $\mathcal{G}(S^{\frac{-1}{2}}g,L,M,N)$ is a  M-D-G Parseval frame for $\ell^2(\mathbb{S})$, where $S$ is the frame operator of $\mathcal{G}(g,L,M,N)$ and $S^{\frac{-1}{2}}g:=\{S^{\frac{-1}{2}}g_l\}_{l\in \mathbb{N}_L}$. 
Then:
\begin{enumerate}
\item  We can suppose that $\mathcal{G}(g,L,M,N)$ is a Parseval frame (because otherwise, we can consider $\mathcal{G}(S^{\frac{-1}{2}}
g,L,M,N)$ instead of $\mathcal{G}(g,L,M,N)$). By the theorem \ref{thm1}, we have for all $j\in \mathbb{S}_N$,  $\displaystyle{\sum_{l\in \mathbb{N}_L} \left(\mathcal{M}_{g_l}(j)\mathcal{M}_{g_l}^*(j)\right)_{0,0}=\frac{1}{M}}.$ Then $\displaystyle{\sum_{l\in \mathbb{N}_L} \sum_{n\in \mathbb{Z}}\vert g_l(j-nN)\vert^2=\frac{1}{M}}$. 
Then $\displaystyle{\sum_{j\in \mathbb{S}_N}\sum_{l\in \mathbb{N}_L} \sum_{n\in \mathbb{Z}}\vert g_l(j-nN)\vert^2=\frac{card(\mathbb{S}_N)}{M}}$.\\
Hence $\displaystyle{\sum_{l\in \mathbb{N}_L}\sum_{j\in \mathbb{S}_N} \sum_{n\in \mathbb{Z}}\vert g_l(j-nN)\vert^2=\frac{card(\mathbb{S}_N)}{M}}$.
Hence $\displaystyle{\sum_{l\in \mathbb{N}_L}\|g_l\|^2=\frac{card(\mathbb{S}_N)}{M}}$. \\Since $\mathcal{G}(g,L,M,N)$ is a Parseval frame, then for all $l\in \mathbb{N}_L$,  $\|E_{\frac{m}{M}}T_{nN}g_l\|=\|g_l\|\leq 1$. Hence $\displaystyle{\frac{card(\mathbb{S}_N)}{M}}\leq L$.
\item Assume that $\mathcal{G}(g,L,M,N)$ is a frame for $\ell^2(\mathbb{S})$. Then $$ \displaystyle{\sum_{l\in \mathbb{N}_L}\|S^{\frac{-1}{2}} g_l\|^2=\frac{card(\mathbb{S}_N)}{M}}.$$ 
\begin{enumerate}
\item[]$\Longrightarrow$) We have:
$$\begin{array}{rcl}
&&\mathcal{G}(g,L,M,N) \text{ is a Riesz basis for } \ell^2(\mathbb{S})\\ &\Longrightarrow& \mathcal{G}(S^{\frac{-1}{2}}g, L,M,N) \text{ is an orthonormal basis for }  \ell^2(\mathbb{S}).\\
&\Longrightarrow& \displaystyle{\sum_{l\in \mathbb{N}_L}\|S^{\frac{-1}{2}} g_l\|^2=\frac{card(\mathbb{S}_N)}{M}} \text{ and } \|S^{\frac{-1}{2}}g_l\|=1.\\
&\Longrightarrow& \displaystyle{\frac{card(\mathbb{S}_N)}{M}=L}
\end{array}$$
\item[]$\Longleftarrow$) Conversely, Assume that $\displaystyle{\frac{card(\mathbb{S}_N)}{M}=L}$. Then $ \displaystyle{\sum_{l\in \mathbb{N}_L}\|S^{\frac{-1}{2}} g_l\|^2=L}.$ Since for all $l\in \mathbb{N}_L$,  $\|S^{\frac{-1}{2}}g_l\|\leq 1$, then for all $l\in \mathbb{N}_L$, $\|S^{\frac{-1}{2}}g_l\|=1$. Hence $\mathcal{G}(S^{\frac{-1}{2}}g,L,M,N)$ is an orthonormal basis for $\ell^2(\mathbb{S})$ (lemma \ref{lem3'}). Hence, by lemma \ref{lem2}, $\mathcal{G}(g,L,M,N)$ is a Riesz basis.
\end{enumerate}
\end{enumerate}
\end{proof}
The following result is a matrix-characterization of  Parseval M-D-G frames for $\ell^2(\mathbb{S})$.
\begin{theorem}\label{pars}
Let $\mathcal{G}(g,L,M,N)$ be a M-D-G system for $\ell^2(\mathbb{S})$. Then the following statements are equivalent:
\begin{enumerate}
\item $\mathcal{G}(g,L,M,N)$ is a  M-D-G Parseval frame for $\ell^2(\mathbb{S})$.
\item \begin{enumerate}
     \item For all $j\in \mathbb{S}_N$, $\displaystyle{\sum_{l\in \mathbb{N}_L}\left(\mathcal{M}_{g_l}(j)\mathcal{M}_{g_l}^*(j)\right)_{0,0}=\frac{1}{M}}$.
     \item For all $j\in \mathbb{S}_N$, $k\in \mathbb{Z}-\{0\}$, $\displaystyle{\sum_{l\in \mathbb{N}_L}\left(\mathcal{M}_{g_l}(j)\mathcal{M}_{g_l}^*(j)\right)_{0,k}}=0$.
\end{enumerate}
\end{enumerate}
\end{theorem}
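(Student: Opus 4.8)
The plan is to use Lemma \ref{lem4}, which expresses $\sum_{l,n,m}|\langle f,E_{\frac{m}{M}}T_{nN}g_l\rangle|^2=F_1(f)+F_2(f)$ for every $f\in\ell_0(\mathbb{S})$, together with the density of $\ell_0(\mathbb{S})$ in $\ell^2(\mathbb{S})$, to reduce the Parseval condition $\sum_{l,n,m}|\langle f,E_{\frac{m}{M}}T_{nN}g_l\rangle|^2=\|f\|^2$ to the two matrix identities (a) and (b).

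\emph{Direction $(2)\Rightarrow(1)$.} Assume (a) and (b). By Lemma \ref{lem5} the matrices $\mathcal{M}_{g_l}(.)\mathcal{M}_{g_l}^*(.)$ are $N$-periodic, so (a) and (b) extend from $\mathbb{S}_N$ to all of $\mathbb{S}$. For $f\in\ell_0(\mathbb{S})$, condition (b) makes every coefficient in the sum defining $F_2(f)$ vanish, so $F_2(f)=0$, and condition (a) gives $F_1(f)=M\sum_{j\in\mathbb{S}}\frac{1}{M}|f(j)|^2=\|f\|^2$. Hence $\sum_{l,n,m}|\langle f,E_{\frac{m}{M}}T_{nN}g_l\rangle|^2=\|f\|^2$ for all $f\in\ell_0(\mathbb{S})$, and by density this identity persists on all of $\ell^2(\mathbb{S})$; that is exactly the statement that $\mathcal{G}(g,L,M,N)$ is a Parseval frame.

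\emph{Direction $(1)\Rightarrow(2)$.} Assume the Parseval identity holds. Then $F_1(f)+F_2(f)=\|f\|^2$ for all $f\in\ell_0(\mathbb{S})$, i.e.
$$M\sum_{j\in\mathbb{S}}\Bigl[\sum_{l\in\mathbb{N}_L}\bigl(\mathcal{M}_{g_l}(j)\mathcal{M}_{g_l}^*(j)\bigr)_{0,0}-\tfrac1M\Bigr]|f(j)|^2+M\sum_{k\neq0}\sum_{j\in\mathbb{S}}\Bigl(\sum_{l}\mathcal{M}_{g_l}(j)\mathcal{M}_{g_l}^*(j)\Bigr)_{0,k}f(j)\overline{f(j+kM)}=0.$$
The standard device is to test against well-chosen $f$. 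Testing with $f=\delta_{j_0}$ for a fixed $j_0\in\mathbb{S}$ kills the off-diagonal sum and yields (a) at $j=j_0$ (and $N$-periodicity then gives it for all $j\in\mathbb{S}_N$, or one simply notes $j_0$ was arbitrary in $\mathbb{S}$). To extract (b), fix $j_0\in\mathbb{S}$ and $k_0\neq0$ with $j_0+k_0M\in\mathbb{S}$ and test with $f=\delta_{j_0}+\lambda\,\delta_{j_0+k_0M}$; expanding, the diagonal contributions are already known from (a), and the cross terms force $\mathrm{Re}\bigl(\lambda\cdot c\bigr)=0$ where $c=\sum_l(\mathcal{M}_{g_l}(j_0)\mathcal{M}_{g_l}^*(j_0))_{0,k_0}$ (using the Hermitian symmetry $(\mathcal{M}\mathcal{M}^*)_{0,-k}=\overline{(\mathcal{M}\mathcal{M}^*)_{0,k}}$ to combine the $k_0$ and $-k_0$ terms). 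Choosing $\lambda=1$ and $\lambda=i$ gives $c=0$, which is (b). If $j_0+k_0M\notin\mathbb{S}$ then $g_l(j_0+k_0M-nN)=0$ for all $n$ (since $\mathbb{S}$ is $N\mathbb{Z}$-periodic), so $c=0$ trivially.

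\emph{Main obstacle.} The only delicate point is the bookkeeping in $(1)\Rightarrow(2)$: one must pair the $k$ and $-k$ off-diagonal terms correctly and invoke the Hermitian symmetry of $\mathcal{M}_{g_l}(j)\mathcal{M}_{g_l}^*(j)$ so that the two real equations obtained from $\lambda=1,i$ actually pin down the complex number $c$ to zero; and one must be slightly careful that indices $j_0+k_0M$ landing outside $\mathbb{S}$ are handled separately (there the relevant matrix entry vanishes automatically). Everything else is a routine application of Lemma \ref{lem4} and density.
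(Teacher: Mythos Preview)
Your proof is correct and follows the same overall scheme as the paper's: both directions rest on Lemma \ref{lem4} plus density of $\ell_0(\mathbb{S})$, and $(1)\Rightarrow(2)$ is obtained by testing against carefully chosen finitely supported $f$. The $(2)\Rightarrow(1)$ direction is identical to the paper's.

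For $(1)\Rightarrow(2)$ your choice of test functions differs from the paper's. The paper obtains (a) by invoking Theorem \ref{thm1} (a result quoted from \cite{8}), whereas you obtain it directly from the single test $f=\delta_{j_0}$, which is more self-contained. For (b), the paper fixes $k_0$ and builds a single $f$ supported on $\mathbb{S}_N\cup(\mathbb{S}_N+k_0M)$ with phases chosen so that $F_2(f)$ becomes $M\sum_{j\in\mathbb{S}_N}|a_{k_0}(j)|^2$, forcing every $a_{k_0}(j)$ to vanish at once; you instead use the two-point tests $f=\delta_{j_0}+\lambda\,\delta_{j_0+k_0M}$ with $\lambda\in\{1,i\}$ and kill one entry at a time. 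Your route is arguably cleaner and more elementary; the paper's has the minor advantage of handling all $j\in\mathbb{S}_N$ simultaneously. One small remark on your bookkeeping: the symmetry you actually need to combine the $k_0$ and $-k_0$ terms is the shifted identity $\bigl(\sum_l\mathcal{M}_{g_l}(j_0+k_0M)\mathcal{M}_{g_l}^*(j_0+k_0M)\bigr)_{0,-k_0}=\overline{\bigl(\sum_l\mathcal{M}_{g_l}(j_0)\mathcal{M}_{g_l}^*(j_0)\bigr)_{0,k_0}}$, which follows from the row-shift relation $(\mathcal{M}_{g_l}(j+k_0M))_{k,n}=(\mathcal{M}_{g_l}(j))_{k+k_0,n}$ together with Hermitian symmetry of $\mathcal{M}\mathcal{M}^*$; your phrasing ``$(\mathcal{M}\mathcal{M}^*)_{0,-k}=\overline{(\mathcal{M}\mathcal{M}^*)_{0,k}}$'' at a fixed $j$ is not quite the identity being used, though the conclusion is unaffected.
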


\begin{proof}\hspace{1cm}
\begin{enumerate}
\item[]$\Longrightarrow)$ Assume that  $\mathcal{G}(g,L,M,N)$ is a Parseval M-D-G frame. Then by theorem \ref{thm1}, we have for all $j\in \mathbb{S}_N$, $\displaystyle{\sum_{l\in \mathbb{N}_L}\left(\mathcal{M}_{g_l}(j)\mathcal{M}_{g_l}^*(j)\right)_{0,0}=\frac{1}{M}}$, hence (a). On the other hand, fix $k_0 \in \mathbb{Z}-\{0\}$ and denote $a_{k_0}(.):=\left(\displaystyle{\sum_{l\in \mathbb{N}_L}\mathcal{M}_{g_l}(.)\mathcal{M}_{g_l}^*(.)}\right)_{0,k_0}$. Define $f\in \ell_0(\mathbb{S})$ as follows $ f(j)=e^{-i \,arg(a_{k_0(j)})}$ and $f(j+kM)=1$ for all $j\in \mathbb{S}_N, \; k\in \mathbb{Z}-\{0\}$ and $f(j)=0$  otherwise.\\ Using (a), we have $F_1(f)=\|f\|^2$. Then: $$\|f\|^2= \displaystyle{\sum_{l\in \mathbb{N}_L}\sum_{n\in \mathbb{Z}}\sum_{m\in \mathbb{N}_M}\left \vert \langle f, E_{\frac{m}{M}}T_{nN}g_l\rangle \right\vert^2}=F_1(f)+F_2(f)=\|f\|^2+F_2(f).$$
Then $F_2(f)=0$. Thus $M\displaystyle{\sum_{j\in \mathbb{S}_N} \vert a_{k_0(j)}\vert^2}=0$. Then for all $j\in \mathbb{S}_N$, $a_{k_0}(j)=0$.\\
That means that for all $j\in \mathbb{S_N}$,  $\left(\displaystyle{\sum_{l\in \mathbb{N}_L}\mathcal{M}_{g_l}(j)\mathcal{M}_{g_l}^*(j)}\right)_{0,k_0}=0$.\\
Hence For all $j\in \mathbb{S}_N$, $k\in \mathbb{Z}-\{0\}$, $\displaystyle{\sum_{l\in \mathbb{N}_L}\left(\mathcal{M}_{g_l}(j)\mathcal{M}_{g_l}^*(j)\right)_{0,k}}=0$.
\item[]$\Longleftarrow$) Conversely, assume (a) and (b). For all $f\in \ell_0(\mathbb{S})$, we have: 
$$\begin{array}{rcl}
\displaystyle{\sum_{l\in \mathbb{N}_L}\sum_{n\in \mathbb{Z}}\sum_{m\in \mathbb{N}_M}\left \vert \langle f, E_{\frac{m}{M}}T_{nN}g_l\rangle \right\vert^2}&=&F_1(f)+F_2(f)\\
&=&\|f\|^2+0\\
&=& \|f\|^2.

\end{array}$$
By density of $\ell_0(\mathbb{S})$ in $\ell^2(\mathbb{S})$, we deduce that $\mathcal{G}(g,L,M,N)$ is a M-D-G  Parseval frame for $\ell^2(\mathbb{S})$.
\end{enumerate}
\end{proof}	

\begin{corollary}\label{parsevalsupp}
Let $\mathcal{G}(g,L,M,N)$ be a M-D-G system for $\ell^2(\mathbb{S})$ such that for all $l\in \mathbb{N}_L$, $\vert supp(g_l)\vert < M$. Then the following statements are equivalent:
\begin{enumerate}
\item $\mathcal{G}(g,L,M,N)$ is a M-D-G Parseval frame for $\ell^2(\mathbb{S})$.\\
\item  For all $j\in \mathbb{S}_N$, $\displaystyle{\sum_{l\in \mathbb{N}_L}\left(\mathcal{M}_{g_l}(j)\mathcal{M}_{g_l}^*(j)\right)_{0,0}=\frac{1}{M}}$.
\end{enumerate}
\end{corollary}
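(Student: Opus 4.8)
The plan is to read this off directly from Theorem \ref{pars}, by checking that the support hypothesis forces condition (b) there to hold automatically, so that the characterization of Parseval frames collapses to condition (a) --- which is exactly statement (2).

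First I would record the elementary computation behind this. Fix $l \in \mathbb{N}_L$, $j \in \mathbb{S}$ and $k \in \mathbb{Z}\setminus\{0\}$. By the definition of $\mathcal{M}_{g_l}(\cdot)$,
$$\left(\mathcal{M}_{g_l}(j)\mathcal{M}_{g_l}^*(j)\right)_{0,k}=\sum_{n\in\mathbb{Z}} g_l(j-nN)\,\overline{g_l(j+kM-nN)}.$$
A summand is nonzero only if both $j-nN$ and $j+kM-nN$ lie in $supp(g_l)$; but those two integers differ by $kM$, and $|kM|\ge M>|supp(g_l)|=\max supp(g_l)-\min supp(g_l)$, while any two elements of $supp(g_l)$ are at distance at most $|supp(g_l)|$. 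Hence every summand vanishes, so $\left(\mathcal{M}_{g_l}(j)\mathcal{M}_{g_l}^*(j)\right)_{0,k}=0$, and summing over $l\in\mathbb{N}_L$ yields $\sum_{l\in\mathbb{N}_L}\left(\mathcal{M}_{g_l}(j)\mathcal{M}_{g_l}^*(j)\right)_{0,k}=0$ for all $j\in\mathbb{S}$ and all $k\neq 0$ --- in particular for $j\in\mathbb{S}_N$. This holds with no frame assumption whatsoever; it is the same mechanism already used in the proof of the earlier proposition on windows with $|supp(g_l)|<M$ to see that $F_2(f)=0$.

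It then remains to apply Theorem \ref{pars}: statement (1) of the corollary is equivalent to conditions (a) and (b) there holding simultaneously. Since (b) is automatic under our hypothesis, (1) is equivalent to (a) alone, and (a) is verbatim statement (2). This closes the chain of equivalences.

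There is no genuine obstacle here; the only step needing the slightest care is the distance estimate showing that the off-diagonal entries $\left(\mathcal{M}_{g_l}(j)\mathcal{M}_{g_l}^*(j)\right)_{0,k}$ vanish for $k\neq 0$, and even that is immediate from the convention $|A|=\max A-\min A$. One could equally well phrase the whole argument as a one-line remark: by the support hypothesis, condition (b) of Theorem \ref{pars} is vacuously true, so Theorem \ref{pars} reduces to the stated equivalence.
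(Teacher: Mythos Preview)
Your proposal is correct and follows exactly the paper's approach: the paper's proof simply asserts that the support hypothesis forces $\sum_{l\in\mathbb{N}_L}\left(\mathcal{M}_{g_l}(j)\mathcal{M}_{g_l}^*(j)\right)_{0,k}=0$ for $k\neq 0$ and then invokes Theorem~\ref{pars}. Your version is in fact more complete, since you spell out the distance argument behind the vanishing of the off-diagonal entries that the paper leaves implicit.
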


\begin{proof}
Since for all $l\in \mathbb{N}_L$, $\vert supp(g_l)\vert < M$, then For all $j\in \mathbb{S}_N$, $k\in \mathbb{Z}-\{0\}$, $\displaystyle{\sum_{l\in \mathbb{N}_L}\left(\mathcal{M}_{g_l}(j)\mathcal{M}_{g_l}^*(j)\right)_{0,k}}=0$. Hence, Theorem \ref{pars} completes the proof.
\end{proof}

In the  following theorem, we characterize  the existence of M-D-G (Parseval) frames for $\ell^2(\mathbb{Z})$ using the parameters $M$, $N$ and $L$. Our proof is constructive.
\begin{theorem}\label{exframe}
Let $L,M,N\in \mathbb{N}$. Then the following statements are equivalent:
\begin{enumerate}
\item $N\leq LM$.
\item There exists  $g:=\{g_l\}_{l\in \mathbb{N}_L}\subset \ell^2(\mathbb{Z})$ such that $\mathcal{G}(g,L,M,N)$ is a M-D-G Parseval frame for $\ell^2(\mathbb{Z})$.
\item There exists  $g:=\{g_l\}_{l\in \mathbb{N}_L}\subset \ell^2(\mathbb{Z})$ such that $\mathcal{G}(g,L,M,N)$ is a M-D-G frame for $\ell^2(\mathbb{Z})$.
\end{enumerate}
\end{theorem}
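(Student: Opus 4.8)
The plan is to close the cycle of implications $(2)\Rightarrow(3)\Rightarrow(1)\Rightarrow(2)$, so that only one direction carries real content. The implication $(2)\Rightarrow(3)$ is immediate, since every Parseval frame is in particular a frame. For $(3)\Rightarrow(1)$ I would simply invoke Theorem~\ref{Rieszbasis}(1): a M-D-G system which is a frame for $\ell^2(\mathbb{S})$ forces $\frac{card(\mathbb{S}_N)}{M}\leq L$; specializing to $\mathbb{S}=\mathbb{Z}$ we have $\mathbb{S}_N=\mathbb{Z}\cap\mathbb{N}_N=\mathbb{N}_N$, hence $card(\mathbb{S}_N)=N$, and the inequality reads $N\leq LM$. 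Thus the entire weight of the theorem rests on the constructive implication $(1)\Rightarrow(2)$.

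For $(1)\Rightarrow(2)$, assume $N\leq LM$. The idea is to cut the fundamental block $\mathbb{N}_N=\{0,1,\dots,N-1\}$ into at most $L$ consecutive pieces, each of length at most $M$, and take for each window the normalized indicator of one such piece. Since $N\leq LM$, one can write $\mathbb{N}_N$ as a disjoint union of $L$ consecutive blocks $B_0,\dots,B_{L-1}$, each of cardinality at most $M$, where the last few may be empty; explicitly, $B_l:=\{lM,\dots,\min((l+1)M,N)-1\}$ works. Set $g_l:=\frac{1}{\sqrt{M}}\chi_{B_l}\in\ell^2(\mathbb{Z})$ (so $g_l=0$ whenever $B_l=\emptyset$). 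Each $g_l$ has finite support lying in an interval of length $<M$, so $\vert supp(g_l)\vert<M$, and being finitely supported $\mathcal{G}(g,L,M,N)$ is a Bessel system (the remark following Theorem~\ref{thm2}).

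It then suffices, by Corollary~\ref{parsevalsupp} applied with $\mathbb{S}=\mathbb{Z}$ and $\mathbb{S}_N=\mathbb{N}_N$, to check that $\sum_{l\in\mathbb{N}_L}\bigl(\mathcal{M}_{g_l}(j)\mathcal{M}_{g_l}^*(j)\bigr)_{0,0}=\frac1M$ for every $j\in\mathbb{N}_N$. Rewriting this quantity as $\sum_{l\in\mathbb{N}_L}\sum_{n\in\mathbb{Z}}\vert g_l(j-nN)\vert^2=\frac1M\sum_{l\in\mathbb{N}_L}\sum_{n\in\mathbb{Z}}\chi_{B_l}(j-nN)$, and observing that $B_l\subseteq\mathbb{N}_N$ together with $j\in\mathbb{N}_N$ forces $j-nN\in B_l$ to occur only for $n=0$, the double sum collapses to $\frac1M\sum_{l\in\mathbb{N}_L}\chi_{B_l}(j)$, which is $\frac1M$ because $j$ belongs to exactly one block. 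Hence $\mathcal{G}(g,L,M,N)$ is a M-D-G Parseval frame for $\ell^2(\mathbb{Z})$, completing $(1)\Rightarrow(2)$.

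The genuinely nontrivial step is $(1)\Rightarrow(2)$: it demands an explicit construction rather than an abstract argument, and I expect the two places needing care to be (i) verifying that the blocks $B_l$ really do partition $\mathbb{N}_N$ — this is precisely where $N\leq LM$ enters, with the possibly-empty trailing blocks (and the corresponding zero windows) causing no harm since $\chi_\emptyset\equiv0$ contributes nothing to any of the relevant sums; and (ii) arranging $\vert supp(g_l)\vert<M$ so that the off-diagonal entries $(\mathcal{M}_{g_l}(j)\mathcal{M}_{g_l}^*(j))_{0,k}$ with $k\neq0$ automatically vanish and Corollary~\ref{parsevalsupp} is applicable — this is guaranteed by taking blocks of length at most $M$. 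The remaining implications are immediate from Theorem~\ref{Rieszbasis} and the definitions.
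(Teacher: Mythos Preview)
Your proof is correct and follows essentially the same approach as the paper: the easy implications $(2)\Rightarrow(3)$ and $(3)\Rightarrow(1)$ via Theorem~\ref{Rieszbasis} match exactly, and for $(1)\Rightarrow(2)$ both you and the paper partition $\mathbb{N}_N$ into consecutive blocks of length at most $M$, take the windows to be $\frac{1}{\sqrt{M}}$ times the corresponding indicators (with trailing zero windows if needed), and verify the Parseval condition via Corollary~\ref{parsevalsupp}. Your explicit formula $B_l=\{lM,\dots,\min((l+1)M,N)-1\}$ is in fact a cleaner way of saying what the paper describes verbally.
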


\begin{proof}\hspace{1cm}
\begin{enumerate}
\item[]$1\Longrightarrow2)$ Assume that $N\leq LM$. Let $K\in \mathbb{N}$ be the maximal integer satisfying $KL\leq N$.  Define $\mathbb{N}_N^0$ as the set of the first $M$ elements of $\mathbb{N}_N$ and then for all $l\in \mathbb{N}_K: l\geqslant1$, $\mathbb{N}_N^l$ as the set of the $(l+1)$-th $M$ elements of $\mathbb{N}_N$ and $\mathbb{N}_N^K$ as the set of the rest elements of $\mathbb{N}_N$. Define for all $L>l>K$, $g_l:=0$ and for $l\in \mathbb{N}_{K+1}$, $g_l:=\chi_{\mathbb{N}_N^l}.\displaystyle{\frac{1}{\sqrt{M}}}$. It is clear that $g_l\in \ell_0(\mathbb{Z})$, $\forall l\in \mathbb{N}_{K+1},\; supp(g_l)=\mathbb{N}_N^l$, $\displaystyle{\bigcup_{l\in \mathbb{N}_{K+1}} \mathbb{N}_N^l}=\mathbb{N}_N$ and $\mathbb{N}_N^l\bigcap \mathbb{N}_N^{l'}=\emptyset$ if $l,l'\in \mathbb{N}_{K+1}$ and $l\neq l'$. Since for all $l\in \mathbb{N}_L$, $\vert supp(g_l)\vert< M$, then to show that $\mathcal{G}(g,L,M,N)$, where $g=\{g_l\}_{l\in \mathbb{N}_L}$, is a M-D-G Parseval frame for $\ell^2(\mathbb{Z})$, it suffices to show the condition (2)  in corollary \ref{parsevalsupp}. Let $j\in \mathbb{N}_N$, then there exists a unique $l_0\in \mathbb{N}_{K+1}$ such that $j\in \mathbb{N}_N^{l_0}$. Then we have
$$\begin{array}{rcl}
\displaystyle{\left(\sum_{l\in \mathbb{N}_L}\mathcal{M}_{g_l}(j)\mathcal{M}_{g_l}^*(j)\right)_{0,0}}&=&\displaystyle{\sum_{l\in \mathbb{N}_L}\sum_{n\in \mathbb{Z}}\vert g_l(j-nN)\vert^2}\\
&=&\displaystyle{\sum_{l\in \mathbb{N}_{K+1}}\sum_{n\in \mathbb{Z}}\vert g_l(j-nN)\vert^2}\\
&=& \displaystyle{\sum_{n\in \mathbb{Z}}\vert g_{l_0}(j-nN)\vert^2}\\
&=&\displaystyle{\vert g_{l_0}(j)\vert^2}\\
&=& \displaystyle{\frac{1}{M}}.
\end{array}$$
Hence, by corollary \ref{parsevalsupp}, $\mathcal{G}(g,L,M,N)$ is a M-D-G Parseval frame for $\ell^2(\mathbb{Z})$.
\item[]$2\Longrightarrow3)$ Clear.
\item[]$3\Longrightarrow 1$) By proposition \ref{Rieszbasis}.\\
\end{enumerate}
\end{proof}

The following example is a direct consequence of the above proof.
\begin{example}
Let $N=3$, $M=2$, $L=2$. Define $g_0=\chi_{\{0,1\}}.\displaystyle{\frac{1}{\sqrt{2}}}$ and $g_1=\chi_{\{2\}}.\displaystyle{\frac{1}{\sqrt{2}}}$. Then $\mathcal{G}(\,\{g_0,g_1\}\, ,2,2,3)$ is a two-window D-G Parseval frame for $\ell^2(\mathbb{Z})$.\\
\end{example}
The following result is a matrix-characterization of M-D-G orthonormal bases for $\ell^2(\mathbb{S})$.

\begin{proposition}\label{ortbasis}
Let $\mathcal{G}(g,L,M,N)$ be a M-D-G system for $\ell^2(\mathbb{S})$. Then the following statements are equivalent:
\begin{enumerate}
\item $\mathcal{G}(g,L,M,N)$ is a  M-D-G orthonormal basis for $\ell^2(\mathbb{S})$.
\item \begin{enumerate}
\item For all $l\in \mathbb{N}_L$, $\|g_l\|=1$.
     \item For all $j\in \mathbb{S}_N$, $\displaystyle{\sum_{l\in \mathbb{N}_L}\left(\mathcal{M}_{g_l}(j)\mathcal{M}_{g_l}^*(j)\right)_{0,0}=\frac{1}{M}}$.
     \item For all $j\in \mathbb{S}_N$, $k\in \mathbb{Z}-\{0\}$, $\displaystyle{\sum_{l\in \mathbb{N}_L}\left(\mathcal{M}_{g_l}(j)\mathcal{M}_{g_l}^*(j)\right)_{0,k}}=0$.
\end{enumerate}
\end{enumerate}
\end{proposition}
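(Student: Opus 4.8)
The plan is to deduce this proposition directly from Theorem \ref{pars} together with Lemma \ref{lem3'}. The crucial observation is that conditions (b) and (c) in statement (2) are \emph{verbatim} the conditions (a) and (b) appearing in Theorem \ref{pars}; hence, by that theorem, (b) and (c) hold if and only if $\mathcal{G}(g,L,M,N)$ is a Parseval frame for $\ell^2(\mathbb{S})$. So the proposition will follow once we account for the extra norm-one condition (a), and this is exactly what Lemma \ref{lem3'} is designed for.

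For the implication $(1)\Longrightarrow(2)$: an orthonormal basis is in particular a Parseval frame (Parseval's identity), so Theorem \ref{pars} immediately yields (b) and (c). Moreover every element of an orthonormal basis has norm $1$; specializing the indices to $m=0$, $n=0$ gives $\|g_l\|=\|E_{0}T_{0}g_l\|=1$ for each $l\in\mathbb{N}_L$, which is (a). For the converse $(2)\Longrightarrow(1)$: assume (a), (b), (c). By Theorem \ref{pars}, conditions (b) and (c) make $\mathcal{G}(g,L,M,N)$ a Parseval frame for $\ell^2(\mathbb{S})$. Since $E_{\frac{m}{M}}$ and $T_{nN}$ are unitary operators of $\ell^2(\mathbb{S})$, condition (a) gives $\|E_{\frac{m}{M}}T_{nN}g_l\|=\|g_l\|=1$ for all $m\in\mathbb{N}_M$, $n\in\mathbb{Z}$, $l\in\mathbb{N}_L$; that is, every element of the system has unit norm. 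Applying Lemma \ref{lem3'} (the implication $(3)\Longrightarrow(2)$) to this Parseval frame, indexed by the countable set $\mathbb{N}_M\times\mathbb{Z}\times\mathbb{N}_L$, we conclude that $\mathcal{G}(g,L,M,N)$ is an orthonormal basis for $\ell^2(\mathbb{S})$.

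I do not expect a genuine obstacle here: the proposition is essentially an immediate corollary, since the substantive analytic work (the Zak-type computation underlying Lemma \ref{lem4} and its use in Theorem \ref{pars}) has already been carried out. The only point requiring a word of care is that Lemma \ref{lem3'} is stated for an abstract Parseval frame, so one should note that nothing beyond the Parseval property is needed to invoke it with our particular index set, and that the unit-norm reduction of the system's elements to the single windows $g_l$ is legitimate precisely because of the unitarity of the modulation and translation operators.
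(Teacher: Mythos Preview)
Your proposal is correct and follows exactly the paper's own approach: the paper's proof is the single sentence ``Since the $E_{\frac{m}{M}}T_{nN}$ are unitaries, Lemma \ref{lem3'} and Theorem \ref{pars} complete the proof,'' and you have simply unpacked this in detail. There is nothing to add or change.
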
 

\begin{proof}
Since the $E_{\frac{m}{M}}T_{nN}$ are unitaries, Lemma \ref{lem3'} and Theorem \ref{pars} complete the proof.
\end{proof} 
\begin{corollary}\label{orthosupp}
Let $\mathcal{G}(g,L,M,N)$ be a M-D-G system for $\ell^2(\mathbb{S})$ such that for all $l\in \mathbb{N}_L$, $\vert supp(g_l)\vert < M$. Then the following statements are equivalent:
\begin{enumerate}
\item $\mathcal{G}(g,L,M,N)$ is a M-D-G orthonormal basis for $\ell^2(\mathbb{S})$.
\item \begin{enumerate}
\item For all $l\in \mathbb{N}_L$, $\|g_l\|=1$.
\item For all $j\in \mathbb{S}_N$, $\displaystyle{\sum_{l\in \mathbb{N}_L}\left(\mathcal{M}_{g_l}(j)\mathcal{M}_{g_l}^*(j)\right)_{0,0}=\frac{1}{M}}$.
\end{enumerate}
\end{enumerate}
\end{corollary}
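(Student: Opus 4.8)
The plan is to obtain this as an immediate consequence of Proposition \ref{ortbasis}: under the hypothesis that $\vert supp(g_l)\vert<M$ for every $l\in\mathbb{N}_L$, condition (2c) of that proposition is automatically satisfied, so the list of conditions there collapses to (2a)--(2b), which is exactly statement (2) of the corollary.

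First I would record the elementary support computation, exactly as in the proof of Corollary \ref{parsevalsupp}. For $j\in\mathbb{S}$, $l\in\mathbb{N}_L$ and $k\in\mathbb{Z}-\{0\}$ one has
\[
\left(\mathcal{M}_{g_l}(j)\mathcal{M}_{g_l}^*(j)\right)_{0,k}=\sum_{n\in\mathbb{Z}}g_l(j-nN)\,\overline{g_l(j+kM-nN)}.
\]
The two arguments $j-nN$ and $j+kM-nN$ differ by $kM$, whose absolute value is $\geq M>\vert supp(g_l)\vert$; hence they cannot both belong to $supp(g_l)$, every summand vanishes, and therefore $\left(\mathcal{M}_{g_l}(j)\mathcal{M}_{g_l}^*(j)\right)_{0,k}=0$. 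Summing over $l$ shows that condition (2c) of Proposition \ref{ortbasis} holds for free.

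Consequently the equivalence (1)$\Longleftrightarrow$(2) of Proposition \ref{ortbasis} becomes, in the present setting, precisely the stated equivalence. (Equivalently, one can unpack the argument without quoting Proposition \ref{ortbasis}: by Lemma \ref{lem3'} a system is an orthonormal basis iff it is a Parseval frame all of whose members have norm $1$; since $E_{\frac{m}{M}}$ and $T_{nN}$ are unitary, $\Vert E_{\frac{m}{M}}T_{nN}g_l\Vert=\Vert g_l\Vert$, so the norm-one requirement on the whole system is precisely (2a); and by Corollary \ref{parsevalsupp}, under $\vert supp(g_l)\vert<M$ being a Parseval frame is equivalent to (2b); combining these gives the claim.)

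There is no genuine obstacle here: the only point requiring any care is the support bookkeeping that forces the off-diagonal entries to vanish, and that is immediate from $\vert kM\vert\geq M>\vert supp(g_l)\vert$. The statement is truly a corollary, and the proof is just this short reduction to Proposition \ref{ortbasis} (or, alternatively, to Lemma \ref{lem3'} together with Corollary \ref{parsevalsupp}).
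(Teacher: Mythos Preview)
Your proposal is correct and follows exactly the paper's approach: observe that the support hypothesis $\vert supp(g_l)\vert<M$ forces all off-diagonal entries $\left(\mathcal{M}_{g_l}(j)\mathcal{M}_{g_l}^*(j)\right)_{0,k}$ with $k\neq 0$ to vanish, so condition (2c) of Proposition~\ref{ortbasis} is automatic and that proposition yields the equivalence. You have in fact supplied more detail than the paper's one-line proof (the explicit $\vert kM\vert\geq M>\vert supp(g_l)\vert$ argument), and your alternative route via Lemma~\ref{lem3'} and Corollary~\ref{parsevalsupp} is also a valid unpacking of the same idea.
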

\begin{proof}
Since for all $l\in \mathbb{N}_L$, $\vert supp(g_l)\vert < M$, then For all $j\in \mathbb{S}_N$, $k\in \mathbb{Z}-\{0\}$, $\displaystyle{\sum_{l\in \mathbb{N}_L}\left(\mathcal{M}_{g_l}(j)\mathcal{M}_{g_l}^*(j)\right)_{0,k}}=0$. Hence, proposition \ref{ortbasis} completes the proof.
\end{proof}
The following result is an other matrix-characterization, more optimal in practice,  of M-D-G orthonormal bases for $\ell^2(\mathbb{S})$ using the parameters $M$, $N$ and $L$.

\begin{proposition}\label{ortbasis2}
Let $\mathcal{G}(g,L,M,N)$ be a M-D-G system for $\ell^2(\mathbb{S})$. Then the following statements are equivalent:
\begin{enumerate}
\item $\mathcal{G}(g,L,M,N)$ is a  M-D-G orthonormal basis.
\item \begin{enumerate}
\item $card(\mathbb{S}_N)=LM.$
\item For all $j\in \mathbb{S}_N$, $\displaystyle{\sum_{l\in \mathbb{N}_L}\left(\mathcal{M}_{g_l}(j)\mathcal{M}_{g_l}^*(j)\right)_{0,0}=\frac{1}{M}}$.
\item For all $j\in \mathbb{S}_N$, $k\in \mathbb{Z}-\{0\}$, $\displaystyle{\sum_{l\in \mathbb{N}_L}\left(\mathcal{M}_{g_l}(j)\mathcal{M}_{g_l}^*(j)\right)_{0,k}}=0$.
\end{enumerate}
\end{enumerate}
\end{proposition}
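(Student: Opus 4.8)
The plan is to reduce the statement to the characterizations already obtained: Theorem~\ref{pars} for Parseval frames, Proposition~\ref{ortbasis} for orthonormal bases, and the Riesz-basis dichotomy of Theorem~\ref{Rieszbasis}. The key observation is that conditions (b) and (c) of the present statement are exactly conditions (a) and (b) of Theorem~\ref{pars}, hence together they are equivalent to $\mathcal{G}(g,L,M,N)$ being a M-D-G Parseval frame for $\ell^2(\mathbb{S})$. So the whole content of the proposition is to trade the normalization requirement $\|g_l\|=1$ appearing in Proposition~\ref{ortbasis} for the counting condition $card(\mathbb{S}_N)=LM$.

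For the implication $(1)\Rightarrow(2)$ I would argue as follows. An orthonormal basis is in particular a Parseval frame, so Theorem~\ref{pars} yields (b) and (c) immediately. An orthonormal basis is also a Riesz basis, so, $\mathcal{G}(g,L,M,N)$ being a frame, Theorem~\ref{Rieszbasis}(2) gives $card(\mathbb{S}_N)/M=L$, which is (a).

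For $(2)\Rightarrow(1)$, assume (a), (b), (c). By Theorem~\ref{pars}, conditions (b) and (c) force $\mathcal{G}(g,L,M,N)$ to be a Parseval frame for $\ell^2(\mathbb{S})$. Next I would compute $\sum_{l\in\mathbb{N}_L}\|g_l\|^2$: since $\bigl(\mathcal{M}_{g_l}(j)\mathcal{M}_{g_l}^*(j)\bigr)_{0,0}=\sum_{n\in\mathbb{Z}}|g_l(j-nN)|^2$, summing (b) over $j\in\mathbb{S}_N$ and using that $\mathbb{S}$ is the disjoint union $\bigcup_{j\in\mathbb{S}_N}(j+N\mathbb{Z})$ (by $N\mathbb{Z}$-periodicity of $\mathbb{S}$) gives $\sum_{l\in\mathbb{N}_L}\|g_l\|^2=card(\mathbb{S}_N)/M$, which equals $L$ by (a). On the other hand, every element of a Parseval frame has norm at most $1$ (because $\|g_l\|^4=|\langle g_l,g_l\rangle|^2$ is a single term of the Parseval sum $\sum_{l',n,m}|\langle g_l,E_{\frac{m}{M}}T_{nN}g_{l'}\rangle|^2=\|g_l\|^2$), and since $E_{\frac{m}{M}}T_{nN}$ is unitary this reads $\|g_l\|\le 1$ for every $l\in\mathbb{N}_L$. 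Having $L$ nonnegative numbers, each $\le 1$, whose sum equals $L$ forces $\|g_l\|=1$ for all $l$. Thus all three hypotheses of Proposition~\ref{ortbasis} hold, and $\mathcal{G}(g,L,M,N)$ is a M-D-G orthonormal basis for $\ell^2(\mathbb{S})$.

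I do not anticipate a genuine obstacle here. The only two points requiring a little care are the identity $\sum_{l\in\mathbb{N}_L}\|g_l\|^2=card(\mathbb{S}_N)/M$, which simply repeats the computation already carried out in the proof of Theorem~\ref{Rieszbasis}, and the elementary remark that the elements of a Parseval frame have norm $\le 1$; everything else is bookkeeping among the equivalent descriptions of Parseval frames, Riesz bases and orthonormal bases established earlier.
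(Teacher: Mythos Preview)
Your proof is correct and essentially follows the paper's approach. The paper's proof is a one-liner citing that an orthonormal basis is precisely a Parseval frame that is also a Riesz basis, then invoking Theorem~\ref{pars} and Theorem~\ref{Rieszbasis}; your $(2)\Rightarrow(1)$ routes through Proposition~\ref{ortbasis} instead, explicitly deriving $\|g_l\|=1$ from (a) and (b), but this computation is exactly the one inside the proof of Theorem~\ref{Rieszbasis}(2), so the two arguments are the same at the level of ideas.
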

\begin{proof}

Since $\mathcal{G}(g,L,M,N)$ is a  M-D-G orthonormal basis for $\ell^2(\mathbb{S})$ if and only if $\mathcal{G}(g,L,M,N)$ is  a Riesz basis and a Parseval frame for $\ell^2(\mathbb{S})$, 
then proposition \ref{Rieszbasis} and proposition \ref{pars} complete the proof.

\end{proof}
 \begin{corollary}\label{orthosupp2}
Let $\mathcal{G}(g,L,M,N)$ be a M-D-G system for $\ell^2(\mathbb{S})$ such that for all $l\in \mathbb{N}_L$, $\vert supp(g_l)\vert < M$. Then the following statements are equivalent:
\begin{enumerate}
\item $\mathcal{G}(g,L,M,N)$ is a M-D-G orthonormal basis.\\
\item \begin{enumerate}
\item $card(\mathbb{S}_N)=LM$.
\item For all $j\in \mathbb{S}_N$, $\displaystyle{\sum_{l\in \mathbb{N}_L}\left(\mathcal{M}_{g_l}(j)\mathcal{M}_{g_l}^*(j)\right)_{0,0}=\frac{1}{M}}$.
\end{enumerate}
\end{enumerate}
\end{corollary}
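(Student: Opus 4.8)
The plan is to deduce this corollary directly from Proposition \ref{ortbasis2}, exactly as Corollary \ref{orthosupp} was deduced from Proposition \ref{ortbasis}: I will show that the hypothesis $\vert supp(g_l)\vert<M$ for all $l\in\mathbb{N}_L$ forces condition (c) of Proposition \ref{ortbasis2} to hold automatically, so that the three-part condition there collapses to the two-part condition (2a)--(2b) in the present statement.

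The key observation is the elementary identity $\left(\mathcal{M}_{g_l}(j)\mathcal{M}_{g_l}^*(j)\right)_{0,k}=\sum_{n\in\mathbb{Z}}g_l(j-nN)\,\overline{g_l(j+kM-nN)}$, which follows straight from the definition of $\mathcal{M}_{g_l}(j)$ and of the conjugate transpose. For $k\in\mathbb{Z}-\{0\}$ the two arguments $j-nN$ and $j+kM-nN$ occurring in each summand differ by $kM$, and $\vert kM\vert\geq M>\vert supp(g_l)\vert$; hence they cannot both lie in $supp(g_l)$, so every term of the sum vanishes. Summing over $l\in\mathbb{N}_L$ yields, for all $j\in\mathbb{S}_N$ and all $k\in\mathbb{Z}-\{0\}$, $\sum_{l\in\mathbb{N}_L}\left(\mathcal{M}_{g_l}(j)\mathcal{M}_{g_l}^*(j)\right)_{0,k}=0$, which is precisely condition (c) of Proposition \ref{ortbasis2}.

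Consequently, under the standing support hypothesis, statement (2) of Proposition \ref{ortbasis2} is equivalent to the conjunction of (2a) and (2b) of the present corollary, while statement (1) is unchanged; invoking Proposition \ref{ortbasis2} then gives the claimed equivalence. I do not expect a genuine obstacle here: the argument is a routine specialization, and the only point requiring verification — the vanishing of the off-diagonal entries $\left(\mathcal{M}_{g_l}(j)\mathcal{M}_{g_l}^*(j)\right)_{0,k}$ for $k\neq0$ under a support constraint — is exactly the same computation already used in the proofs of Corollary \ref{parsevalsupp} and Corollary \ref{orthosupp}.
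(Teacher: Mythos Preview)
Your proof is correct and follows exactly the same approach as the paper: you show that the support hypothesis forces condition (c) of Proposition \ref{ortbasis2} to vanish automatically, reducing that proposition's three conditions to the two listed here. The paper's proof is identical in structure (indeed even terser, since it omits the explicit verification of the vanishing that you spell out).
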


\begin{proof}
Since for all $l\in \mathbb{N}_L$, $\vert supp(g_l)\vert < M$, then For all $j\in \mathbb{S}_N$, $k\in \mathbb{Z}-\{0\}$, $\displaystyle{\sum_{l\in \mathbb{N}_L}\left(\mathcal{M}_{g_l}(j)\mathcal{M}_{g_l}^*(j)\right)_{0,k}}=0$. Hence, proposition \ref{ortbasis2} completes the proof.

\end{proof}
In the following proposition, we characterize the existence of M-D-G Riesz bases and  M-D-G orthonormal bases for $\ell^2(\mathbb{Z})$ using the parameters $M$, $N$ and $L$. The proof we provide is constructive.
\begin{theorem}\label{exortho}
Let  $L,N,M\in \mathbb{N}$. Then the following statements are equivalent:
\begin{enumerate}
\item $N=LM$.
\item There exists $g=\{g_l\}_{l\in \mathbb{N}_L}\subset \ell^2(\mathbb{Z})$ such that $\mathcal{G}(g,L,M,N)$ is a M-D-G orthonormal basis for $\ell^2(\mathbb{Z})$.
\item There exists $g=\{g_l\}_{l\in \mathbb{N}_L}\subset \ell^2(\mathbb{S})$ such that $\mathcal{G}(g,L,M,N)$ is a M-D-G Riesz basis for $\ell^2(\mathbb{Z})$.

\end{enumerate}
\end{theorem}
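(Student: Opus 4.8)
The plan is to prove the chain $1\Rightarrow 2\Rightarrow 3\Rightarrow 1$, closely mirroring the structure of the proof of Theorem~\ref{exframe}. The implication $2\Rightarrow 3$ is immediate, since every orthonormal basis is in particular a Riesz basis. For $3\Rightarrow 1$, suppose $\mathcal{G}(g,L,M,N)$ is a M-D-G Riesz basis for $\ell^2(\mathbb{Z})$. A Riesz basis is a frame, so by part~(1) of Theorem~\ref{Rieszbasis} applied with $\mathbb{S}=\mathbb{Z}$ (where $card(\mathbb{S}_N)=N$) we get $N/M\le L$, i.e. $N\le LM$; and by the equivalence in part~(2) of Theorem~\ref{Rieszbasis}, being a Riesz basis forces $N/M=L$, i.e. $N=LM$. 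So the only real work is in $1\Rightarrow 2$, and that is where I expect the main obstacle to lie.

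For $1\Rightarrow 2$, assume $N=LM$. The idea is to partition $\mathbb{N}_N=\{0,1,\dots,N-1\}$ into $L$ consecutive blocks of size $M$: set $\mathbb{N}_N^l:=\{lM,lM+1,\dots,lM+M-1\}$ for $l\in\mathbb{N}_L$, and define $g_l:=\frac{1}{\sqrt{M}}\chi_{\mathbb{N}_N^l}$. Then each $g_l\in\ell_0(\mathbb{Z})$ with $|supp(g_l)|=M-1<M$ and $\|g_l\|=1$, and the blocks are pairwise disjoint with union $\mathbb{N}_N$. I would then verify condition~(2)(b) of Corollary~\ref{orthosupp2}: fix $j\in\mathbb{N}_N$; there is a unique $l_0\in\mathbb{N}_L$ with $j\in\mathbb{N}_N^{l_0}$, and since the translates $\{j-nN\}_{n\in\mathbb{Z}}$ meet $supp(g_l)$ only when $l=l_0$ and $n=0$ (because $supp(g_l)\subset\mathbb{N}_N$ and distinct $n$ give representatives in distinct residue classes mod $N$), we obtain
$$
\left(\sum_{l\in\mathbb{N}_L}\mathcal{M}_{g_l}(j)\mathcal{M}_{g_l}^*(j)\right)_{0,0}
=\sum_{l\in\mathbb{N}_L}\sum_{n\in\mathbb{Z}}|g_l(j-nN)|^2
=|g_{l_0}(j)|^2=\frac{1}{M}.
$$
Condition~(2)(a) of Corollary~\ref{orthosupp2} holds trivially since $card(\mathbb{Z}\cap\mathbb{N}_N)=N=LM$. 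Hence Corollary~\ref{orthosupp2} yields that $\mathcal{G}(g,L,M,N)$ is a M-D-G orthonormal basis for $\ell^2(\mathbb{Z})$.

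The step I expect to require the most care is the bookkeeping in $1\Rightarrow 2$: one must be sure the chosen windows genuinely have support size strictly less than $M$ so that Corollary~\ref{orthosupp2} (rather than the full Proposition~\ref{ortbasis2}) applies, and one must justify cleanly that for fixed $j\in\mathbb{N}_N$ exactly one pair $(l,n)$ contributes to the sum $\sum_{l,n}|g_l(j-nN)|^2$ — this is the point where the hypothesis $N=LM$ (ensuring the $L$ blocks of size $M$ exactly tile a period) is used, and it is also implicitly what makes the system a \emph{basis} rather than merely a frame. Everything else is a routine assembly of Theorem~\ref{Rieszbasis}, Corollary~\ref{orthosupp2}, and the trivial implication $2\Rightarrow 3$.
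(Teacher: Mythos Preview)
Your proposal is correct and follows essentially the same approach as the paper: the same cyclic chain $1\Rightarrow 2\Rightarrow 3\Rightarrow 1$, the same explicit construction for $1\Rightarrow 2$ (partitioning $\mathbb{N}_N$ into $L$ consecutive blocks of length $M$ and taking $g_l=\frac{1}{\sqrt{M}}\chi_{\mathbb{N}_N^l}$), the same appeal to Corollary~\ref{orthosupp2}, and the same use of Theorem~\ref{Rieszbasis} for $3\Rightarrow 1$. Your write-up is in fact slightly more careful in spelling out why exactly one pair $(l,n)$ contributes in the key sum and in invoking part~(2) of Theorem~\ref{Rieszbasis} explicitly.
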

\begin{proof}\hspace{1cm}
\begin{enumerate}
\item[]$1\Longrightarrow2)$ Assume that $N=LM$. Define  $\mathbb{N}_N^0$ as the set of the first $M$ elements of $\mathbb{N}_N$ and then for all $l\in \mathbb{N}_L: l\geqslant1$, $\mathbb{N}_N^l$ is the set of the $(l+1)$-th $M$ elements of $\mathbb{N}_N$. Define for each $l\in \mathbb{N}_L$, $g_l:=\chi_{\mathbb{S}_N^l}\,. \, \displaystyle{\frac{1}{\sqrt{M}}}$. It is clear that $g_l\in \ell_0(\mathbb{Z})$, $supp(g_l)=\mathbb{N}_N^l$, $\displaystyle{\bigcup_{l\in \mathbb{N}_L} \mathbb{N}_N^l}=\mathbb{N}_N$ and $\mathbb{N}_N^l\bigcap \mathbb{N}_N^{l'}=\emptyset$ if $l\neq l'$. Since $\vert \mathbb{N}_N^l\vert=M-1< M$, then to show that $\mathcal{G}(g,L,M,N)$, where $g=\{g_l\}_{l\in \mathbb{N}_L}$, is a M-D-G orthonormal basis for $\ell^2(\mathbb{Z})$, it suffices to show the condition (b) in corollary \ref{orthosupp2}. \\
Let $j\in \mathbb{N}_N$, then there exists a unique $l_0\in \mathbb{N}_L$ such that  $j\in \mathbb{N}_N^{l_0}$. Then we have:
$$\begin{array}{rcl}
\displaystyle{\left(\sum_{l\in \mathbb{N}_L}\mathcal{M}_{g_l}(j)\mathcal{M}_{g_l}^*(j)\right)_{0,0}}&=&\displaystyle{\sum_{l\in \mathbb{N}_L}\sum_{n\in \mathbb{Z}}\vert g_l(j-nN)\vert^2}\\
&=& \displaystyle{\sum_{n\in \mathbb{Z}}\vert g_{l_0}(j-nN)\vert^2}\\
&=&\vert g_{l_0}(j)\vert^2=\displaystyle{\frac{1}{M}}.
\end{array}$$
Hence, by corollary \ref{orthosupp2}, $\mathcal{G}(g,L,M,N)$ is a M-D-G orthonormal basis for $\ell^2(\mathbb{Z})$.
\item[]$2\Longrightarrow 3)$ Since every orthonormal basis is a Riesz basis.
\item[]$3\Longrightarrow1)$  By Theorem \ref{Rieszbasis}.\\
\end{enumerate}
\end{proof}
The following example is a direct consequence of the above proof.
\begin{example}
Let $N=12$, $M=4$, $L=3$. Define $g_0=\chi_{\{0,\,1,\,2,\,3\}}.\displaystyle{\frac{1}{2}}$,  $g_1=\displaystyle{\chi_{\{4,\,5,\, 6,\, 7\}}.\frac{1}{2}}$ and $g_2=\displaystyle{\chi_{\{8,\,9,\,10,\,11\}}.\frac{1}{2}}$. Then $\mathcal{G}(\, \{g_0,\,g_1,g_2\}\,,3,4,12)$ is a three-window D-G orthonormal basis for $\ell^2(\mathbb{Z})$.\\
\end{example}

There is a strong connection between Parseval frames and Duality; in fact a  Parseval frame is exactely a frame that is dual with itself. The following result generalizes the theorem \ref{pars} to characterize dual frames in $\ell^2(\mathbb{S})$.
\begin{theorem}\label{dual}
Let $\mathcal{G}(g,L,M,N)$ and $\mathcal{G}(h,L,M,N)$  be two M-D-G Bessel systems in $\ell^2(\mathbb{S})$. Then the following statements are equivalent:
\begin{enumerate}
\item $\mathcal{G}(g,L,M,N)$ and $\mathcal{G}(h,L,M,N)$ are dual M-D-G frames for $\ell^2(\mathbb{S})$.
\item \begin{enumerate}
     \item For all $j\in \mathbb{S}_N$, $\displaystyle{\sum_{l\in \mathbb{N}_L}\left(\mathcal{M}_{h_l}(j)\mathcal{M}_{g_l}^*(j)\right)_{0,0}=\frac{1}{M}}$.
     \item For all $j\in \mathbb{S}_N$, $k\in \mathbb{Z}-\{0\}$, $\displaystyle{\sum_{l\in \mathbb{N}_L}\left(\mathcal{M}_{h_l}(j)\mathcal{M}_{g_l}^*(j)\right)_{0,k}}=0$.\\
\end{enumerate}
\end{enumerate}
\end{theorem}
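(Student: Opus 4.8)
The plan is to run the same argument as in the proof of Theorem~\ref{pars}, but with the bilinear form of Lemma~\ref{lem4'} playing the role that Lemma~\ref{lem4} (i.e.\ the quantities $F_1,F_2$) played there. Recall that two Bessel systems are dual frames exactly when the mixed operator $S_{h,g}=U_g\theta_h$ equals the identity on $\ell^2(\mathbb{S})$, equivalently $\langle S_{h,g}f,\phi\rangle=\langle f,\phi\rangle$ for all $f,\phi$ in the dense subspace $\ell_0(\mathbb{S})$. By Lemma~\ref{lem4'},
\[
\langle S_{h,g}f,\phi\rangle=\sum_{l\in\mathbb{N}_L}\sum_{n\in\mathbb{Z}}\sum_{m\in\mathbb{N}_M}\langle f,E_{\frac{m}{M}}T_{nN}g_l\rangle\,\langle E_{\frac{m}{M}}T_{nN}h_l,\phi\rangle
=M\sum_{j\in\mathbb{S}}\sum_{k\in\mathbb{Z}}\Big(\sum_{l\in\mathbb{N}_L}\mathcal{M}_{h_l}(j)\mathcal{M}_{g_l}^*(j)\Big)_{0,k}f(j+kM)\overline{\phi(j)},
\]
so the whole statement reduces to showing: this last expression equals $\langle f,\phi\rangle=\sum_{j\in\mathbb{S}}f(j)\overline{\phi(j)}$ for all $f,\phi\in\ell_0(\mathbb{S})$ if and only if conditions (a) and (b) hold. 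Write $c_k(j):=\big(\sum_{l}\mathcal{M}_{h_l}(j)\mathcal{M}_{g_l}^*(j)\big)_{0,k}$; by Lemma~\ref{lem5} each $c_k$ is $N$-periodic, so it is enough to know its values for $j\in\mathbb{S}_N$.

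For the direction $(2)\Rightarrow(1)$: assuming (a) and (b), the $k\ne 0$ terms vanish and the $k=0$ term contributes $M\sum_{j\in\mathbb{S}}c_0(j)f(j)\overline{\phi(j)}=\sum_{j\in\mathbb{S}}f(j)\overline{\phi(j)}$ since $Mc_0(j)=1$ for every $j\in\mathbb{S}$ (using $N$-periodicity to pass from $\mathbb{S}_N$ to $\mathbb{S}$). Hence $\langle S_{h,g}f,\phi\rangle=\langle f,\phi\rangle$ on $\ell_0(\mathbb{S})$, and by density $S_{h,g}=I$, i.e.\ $\mathcal{G}(h,L,M,N)$ is a dual frame of $\mathcal{G}(g,L,M,N)$ (and, as noted in the introduction, duality is symmetric and automatically upgrades both Bessel systems to frames).

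For $(1)\Rightarrow(2)$: assume $S_{h,g}=I$. Testing against well-chosen $f,\phi\in\ell_0(\mathbb{S})$ isolates the coefficients. Fixing $j_0\in\mathbb{S}_N$ and taking $f=\phi=\delta_{j_0}$ (the indicator of the single point $j_0$) kills every cross term and yields $Mc_0(j_0)=1$, giving (a). For (b), fix $j_0\in\mathbb{S}_N$ and $k_0\in\mathbb{Z}-\{0\}$; choose $\phi=\delta_{j_0}$ and $f=\delta_{j_0+k_0M}$, which is legitimate since $j_0+k_0M\in\mathbb{S}$ whenever $c_{k_0}(j_0)\ne 0$ (if $j_0+k_0M\notin\mathbb{S}$ then the corresponding summand in $\mathcal{M}_{h_l}(j_0)\mathcal{M}_{g_l}^*(j_0)$ already vanishes, so there is nothing to prove). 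Then $\langle S_{h,g}f,\phi\rangle=Mc_{k_0}(j_0)$ while $\langle f,\phi\rangle=0$, forcing $c_{k_0}(j_0)=0$, which is (b). This mirrors exactly the test-function trick used in Theorem~\ref{pars}, only with the two arguments of the bilinear form supported at two different points rather than building one function supported on a whole coset.

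The only mildly delicate point — and the step I expect to require the most care — is the interchange of summations and the use of density: Lemma~\ref{lem4'} is stated for $f,\phi\in\ell_0(\mathbb{S})$, and passing from ``$\langle S_{h,g}f,\phi\rangle=\langle f,\phi\rangle$ on $\ell_0(\mathbb{S})$'' to ``$S_{h,g}=I$ on $\ell^2(\mathbb{S})$'' uses that $S_{h,g}$ is bounded (both systems being Bessel) and that $\ell_0(\mathbb{S})$ is dense; conversely one must make sure the single-point test functions $\delta_{j_0}$, $\delta_{j_0+k_0M}$ really lie in $\ell_0(\mathbb{S})$, i.e.\ that the relevant indices lie in $\mathbb{S}$, which is where the remark about vanishing summands is needed. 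Everything else is a direct transcription of the $F_1/F_2$ bookkeeping from Theorem~\ref{pars} into the mixed-system setting via Lemma~\ref{lem4'} and the $N$-periodicity from Lemma~\ref{lem5}.
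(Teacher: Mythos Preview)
Your proof is correct. The direction $(2)\Rightarrow(1)$ and the extraction of condition (a) match the paper essentially verbatim. For condition (b), however, you take a genuinely different route: the paper works only with the diagonal identity $\langle S_{h,g}f,f\rangle=\|f\|^2$ and, following the pattern of Theorem~\ref{pars}, builds a single test function supported on $\mathbb{S}_N$ with phases $e^{-i\arg c_{k_0}(j)}$ so that the $k=k_0$ cross terms accumulate to $\sum_{j\in\mathbb{S}_N}|c_{k_0}(j)|^2$, forcing this sum to vanish. You instead exploit the full off-diagonal identity $\langle S_{h,g}f,\phi\rangle=\langle f,\phi\rangle$ with two distinct deltas $\phi=\delta_{j_0}$ and $f=\delta_{j_0+k_0M}$, which isolates $Mc_{k_0}(j_0)$ directly in one line. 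Your approach is more elementary and avoids the phase bookkeeping; the paper's version has the minor virtue of using only the quadratic form of $S_{h,g}$, but since $\ell^2(\mathbb{S})$ is a complex Hilbert space this is no real gain. Your handling of the case $j_0+k_0M\notin\mathbb{S}$ (where $g_l$ vanishes on the whole $N\mathbb{Z}$-coset of $j_0+k_0M$, so $c_{k_0}(j_0)=0$ automatically) is correct and is a point the paper's argument glosses over.
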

\begin{proof}\hspace{1cm}
\begin{enumerate}
\item[]$(1)\Longrightarrow (2):$ Assume that $\mathcal{G}(g,L,M,N)$ and $\mathcal{G}(h,L,M,N)$ are dual M-D-G frames. Then for all $f\in \ell_0(\mathbb{S})$, we have: $$\begin{array}{rcl}

\|f\|^2=\langle f,f\rangle&=&\displaystyle{\sum_{l\in \mathbb{N}_L}\sum_{n\in \mathbb{Z}}\sum_{m\in \mathbb{N}_M}\langle f,E_{\frac{m}{M}}T_{nN}g_l\rangle\,\langle E_{\frac{m}{M}}T_{nN}h_l,f\rangle}\\
	&=&M\displaystyle{\sum_{j\in \mathbb{S}}\sum_{k\in \mathbb{Z}}\left(\sum_{l\in \mathbb{N}_L}\mathcal{M}_{h_l}(j)\mathcal{M}_{g_l}^*(j)\right)_{0,k}\,f(j+kM)\overline{f(j)}}
\end{array}$$
Take any $f\in \ell_0(\mathbb{S})$ such that $\vert supp(f)\vert< M$, then for $k\neq 0$, $f(j+kM)\overline{f(j)}=0$. Thus:
$$\|f\|^2=M\displaystyle{\sum_{j\in \mathbb{S}}\left(\sum_{l\in \mathbb{N}_L}\mathcal{M}_{h_l}(j)\mathcal{M}_{g_l}^*(j)\right)_{0,0}\,\vert f(j)\vert^2}$$
Fix $j_0\in \mathbb{S}$ and define $f_0:=\chi_{j_0}$. Then:
$$1=M\left(\sum_{l\in \mathbb{N}_L}\mathcal{M}_{h_l}(j_0)\mathcal{M}_{g_l}^*(j_0)\right)_{0,0}.$$
Hence for all $j_0\in \mathbb{S}$, $\displaystyle{\left(\sum_{l\in \mathbb{N}_L}\mathcal{M}_{h_l}(j_0)\mathcal{M}_{g_l}^*(j_0)\right)_{0,0}=\frac{1}{M}}$.\\
On the other hand, we have for all $f\in \ell_0(\mathbb{S})$, $$\begin{array}{rcl}
\|f\|^2=\langle f,f\rangle&=&\displaystyle{\sum_{l\in \mathbb{N}_L}\sum_{n\in \mathbb{Z}}\sum_{m\in \mathbb{N}_M}\langle f,E_{\frac{m}{M}}T_{nN}g_l\rangle\,\langle E_{\frac{m}{M}}T_{nN}h_l,f\rangle}\\
&=&M\displaystyle{\sum_{j\in \mathbb{S}}\sum_{k\in \mathbb{Z}}\left(\sum_{l\in \mathbb{N}_L}\mathcal{M}_{h_l}(j)\mathcal{M}_{g_l}^*(j)\right)_{0,k}\,f(j+kM)\overline{f(j)}}\\
&=&\|f\|^2+M\displaystyle{\sum_{j\in \mathbb{S}}\sum_{k\in \mathbb{Z}-\{0\}}\left(\sum_{l\in \mathbb{N}_L}\mathcal{M}_{h_l}(j)\mathcal{M}_{g_l}^*(j)\right)_{0,k}\,f(j+kM)\overline{f(j)}}.
\end{array}$$
Thus for all $f\in \ell_0(\mathbb{S})$, $$\displaystyle{\sum_{j\in \mathbb{S}}\sum_{k\in \mathbb{Z}-\{0\}}\left(\sum_{l\in \mathbb{N}_L}\mathcal{M}_{h_l}(j)\mathcal{M}_{g_l}^*(j)\right)_{0,k}\,f(j+kM)\overline{f(j)}}
=0.$$
Fix $k_0\in \mathbb{Z}-\{0\}$. Define $f\in \ell_0(\mathbb{S}$ as follows:  $f_0(j)=e^{-i arg(a_0(j))}$ and $f(j)=1$ for all $j\in \mathbb{S}_N
$, where $a_0(j)=\displaystyle{\left(\sum_{l\in \mathbb{N}_L}\mathcal{M}_{h_l}(j)\mathcal{M}_{g_l}^*(j)\right)_{0,k_0}}$. Then: 
$$\begin{array}{rcl}
0&=&\displaystyle{\sum_{j\in \mathbb{S}}\sum_{k\in \mathbb{Z}-\{0\}}\left(\sum_{l\in \mathbb{N}_L}\mathcal{M}_{h_l}(j)\mathcal{M}_{g_l}^*(j)\right)_{0,k}\,f_0(j+kM)\overline{f_0(j)}}\\
&=&\displaystyle{\sum_{j\in \mathbb{S}_N}\vert a_0(j)\vert^2}.
\end{array}$$
Then for all $j\in \mathbb{S}_N$, $a_0(j):=\displaystyle{\left(\sum_{l\in \mathbb{N}_L}\mathcal{M}_{h_l}(j)\mathcal{M}_{g_l}^*(j)\right)_{0,k_0}}=0$.\\
Hence for all $j\in \mathbb{S}_N$, $k\in \mathbb{Z}-\{0\}$, $\displaystyle{\left(\sum_{l\in \mathbb{N}_L}\mathcal{M}_{h_l}(j)\mathcal{M}_{g_l}^*(j)\right)_{0,k}}=0$.\\
\item[]$(2)\Longrightarrow (1):$ Assume (a) and (b). Then, by lemma \ref{lem4'}, for all $f,\phi \in \ell_0(\mathbb{S})$, we have: 
$$\begin{array}{rcl}
&&\displaystyle{\sum_{l\in \mathbb{N}_L}\sum_{n\in \mathbb{Z}}\sum_{m\in \mathbb{N}_M}\langle f,E_{\frac{m}{M}}T_{nN}g_l\rangle\,\langle E_{\frac{m}{M}}T_{nN}h_l,\phi\rangle}\\
	&=&M\displaystyle{\sum_{j\in \mathbb{S}}\sum_{k\in \mathbb{Z}}\left(\sum_{l\in \mathbb{N}_L}\mathcal{M}_{h_l}(j)\mathcal{M}_{g_l}^*(j)\right)_{0,k}\,f(j+kM)\overline{\phi(j)}}\\
	&=&M\displaystyle{\sum_{j\in \mathbb{S}}\left(\sum_{l\in \mathbb{N}_L}\mathcal{M}_{h_l}(j)\mathcal{M}_{g_l}^*(j)\right)_{0,k}\,f(j)\overline{\phi(j)}}\\
	&+&M\displaystyle{\sum_{j\in \mathbb{S}}\sum_{k\in \mathbb{Z}-\{0\}}\left(\sum_{l\in \mathbb{N}_L}\mathcal{M}_{h_l}(j)\mathcal{M}_{g_l}^*(j)\right)_{0,k}\,f(j+kM)\overline{\phi(j)}}\\
&=&\langle f,\phi\rangle\;\;\; (\text{ by } (a) \text{ and } (b)\;).
\end{array}$$
Then, by density of $\ell_0(\mathbb{S})$ in $\ell^2(\mathbb{S})$, we heve for all $f,\phi \in \ell^2(\mathbb{S})$, 
$$\displaystyle{\sum_{l\in \mathbb{N}_L}\sum_{n\in \mathbb{Z}}\sum_{m\in \mathbb{N}_M}\langle f,E_{\frac{m}{M}}T_{nN}g_l\rangle\,\langle E_{\frac{m}{M}}T_{nN}h_l,\phi\rangle}=\langle f,\phi\rangle.$$
Thus for all $f\in \ell^2(\mathbb{S})$, $$f=\displaystyle{\sum_{l\in \mathbb{N}_L}\sum_{n\in \mathbb{Z}}\sum_{m\in \mathbb{N}_M}\langle f,E_{\frac{m}{M}}T_{nN}g_l\rangle\, E_{\frac{m}{M}}T_{nN}h_l}.$$
Since  $\mathcal{G}(g,L,M,N)$ and $\mathcal{G}(h,L,M,N)$ are Bessel sequences, then they are  dual M-D-G frames in $\ell^2(\mathbb{S})$.
\end{enumerate}

\end{proof}

\begin{corollary}
Let $g:=\{g_l\}_{l\in \mathbb{N}_L},\;h:=\{h_l\}_{l\in \mathbb{N}_L}\subset \ell^2(\mathbb{S})$ such that for all $l\in \mathbb{N}_L$, $\vert supp(g_l)\cup supp(h_l)\vert< M$. Then the following statements are equivalent: 
\begin{enumerate}
\item $\mathcal{G}(g,L,M,N)$ and $\mathcal{G}(h,L,M,N)$ are dual M-D-G frames.
\item For all $j\in \mathbb{S}_N$, $\displaystyle{\sum_{l\in \mathbb{N}_L}\left(\mathcal{M}_{g_l}(j)\mathcal{M}_{h_l}^*(j)\right)_{0,0}=\frac{1}{M}}$.\\
\end{enumerate}
\end{corollary}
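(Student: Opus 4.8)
The plan is to deduce this as a special case of Theorem~\ref{dual}. First I would note that the hypothesis $\vert supp(g_l)\cup supp(h_l)\vert< M$ forces each $g_l$ and each $h_l$ to have finite support, so $g_l,h_l\in\ell_0(\mathbb{S})$; by Theorem~\ref{thm2} (equivalently, the consequence of it noted right after its statement) both $\mathcal{G}(g,L,M,N)$ and $\mathcal{G}(h,L,M,N)$ are Bessel systems in $\ell^2(\mathbb{S})$. Hence Theorem~\ref{dual} applies, and it remains to show that under the support hypothesis its condition~(2) is equivalent to statement~(2) of the corollary.

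Next I would show that condition (b) of Theorem~\ref{dual} holds automatically. For $k\in\mathbb{Z}-\{0\}$, $j\in\mathbb{S}$ and $l\in\mathbb{N}_L$,
$$\left(\mathcal{M}_{h_l}(j)\mathcal{M}_{g_l}^*(j)\right)_{0,k}=\sum_{n\in\mathbb{Z}}h_l(j-nN)\overline{g_l(j+kM-nN)},$$
and a summand is nonzero only when simultaneously $j-nN\in supp(h_l)$ and $j+kM-nN\in supp(g_l)$. These two integers differ by $kM$, hence by at least $M$ in absolute value, whereas any two elements of $supp(g_l)\cup supp(h_l)$ differ by at most $\vert supp(g_l)\cup supp(h_l)\vert<M$. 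So the sum is empty, $\left(\mathcal{M}_{h_l}(j)\mathcal{M}_{g_l}^*(j)\right)_{0,k}=0$ for every $l$, and (b) is satisfied.

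Finally I would reconcile condition (a) with statement~(2) of the corollary. For the diagonal entries,
$$\left(\mathcal{M}_{g_l}(j)\mathcal{M}_{h_l}^*(j)\right)_{0,0}=\sum_{n\in\mathbb{Z}}g_l(j-nN)\overline{h_l(j-nN)}=\overline{\left(\mathcal{M}_{h_l}(j)\mathcal{M}_{g_l}^*(j)\right)_{0,0}},$$
so summing over $l\in\mathbb{N}_L$ and using that $\tfrac1M$ is real, the equation $\sum_{l\in\mathbb{N}_L}\left(\mathcal{M}_{g_l}(j)\mathcal{M}_{h_l}^*(j)\right)_{0,0}=\tfrac1M$ holds if and only if $\sum_{l\in\mathbb{N}_L}\left(\mathcal{M}_{h_l}(j)\mathcal{M}_{g_l}^*(j)\right)_{0,0}=\tfrac1M$; by the $N$-periodicity in Lemma~\ref{lem5} it suffices to impose this for $j\in\mathbb{S}_N$. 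Combining the three steps yields the stated equivalence. There is no real obstacle here; the only points requiring care are that it is the diameter of the \emph{union} $supp(g_l)\cup supp(h_l)$ — not of the individual supports — that makes the off-diagonal entries of $\mathcal{M}_{h_l}(j)\mathcal{M}_{g_l}^*(j)$ vanish, and that one must track the complex conjugation when passing between $\mathcal{M}_{g_l}(j)\mathcal{M}_{h_l}^*(j)$ and $\mathcal{M}_{h_l}(j)\mathcal{M}_{g_l}^*(j)$.
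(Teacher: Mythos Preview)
Your proposal is correct and follows essentially the same approach as the paper: both verify the Bessel condition via finite supports, show that the off-diagonal entries $\left(\mathcal{M}_{h_l}(j)\mathcal{M}_{g_l}^*(j)\right)_{0,k}$ vanish for $k\neq 0$ under the support hypothesis, and then invoke Theorem~\ref{dual}. In fact you are slightly more careful than the paper in one respect: you explicitly reconcile the order swap between $\mathcal{M}_{g_l}(j)\mathcal{M}_{h_l}^*(j)$ in the corollary and $\mathcal{M}_{h_l}(j)\mathcal{M}_{g_l}^*(j)$ in Theorem~\ref{dual} via complex conjugation, whereas the paper passes over this point silently.
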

\begin{proof} Since $g_l,h_l\in \ell_0(\mathbb{S})$, then $\mathcal{G}(g,L,M,N)$ and $\mathcal{G}(h,L,M,N)$ are M-D-G Bessel systems.
\begin{enumerate}
\item[]$(1)\Longrightarrow(2):$ Theorem \ref{dual}.
\item[]$(2)\Longrightarrow (1):$ Assume that for all $j\in \mathbb{S}_N$, $\displaystyle{\sum_{l\in \mathbb{N}_L}\left(\mathcal{M}_{g_l}(j)\mathcal{M}_{h_l}^*(j)\right)_{0,0}=\frac{1}{M}}$.\\
Since for all $l\in \mathbb{N}_L$, $\vert supp(g_l)\cup supp(h_l)\vert< M$, then for all $j\in \mathbb{S}_N$, $k\neq 0$,  $$\begin{array}{rcl}
\displaystyle{\left(\sum_{l\in \mathbb{N}_L}\mathcal{M}_{h_l}(j)\mathcal{M}_{g_l}^*(j)\right)_{0,k}}&=&\displaystyle{\sum_{n\in \mathbb{Z}}h_l(j-nN)\,\overline{g_l(j+kM-nN})}\\
&=&\displaystyle{\sum_{n\in \mathbb{Z}}h_l(j-nN)\,\overline{g_l(j-nN+kM})}\\
&=&0.
\end{array}$$
Then, Theorem \ref{dual} completes the proof.
\end{enumerate}
\end{proof}
The next result is a perturbation condition of M-D-G frames.
	\begin{theorem}
	Assume that $\mathcal{G}(g,L,M,N)$ is a M-D-G frame in $\ell^2(\mathbb{S})$ with frame bounds $A$ and $B$. Let $h:=\{h_l\}_{l\in \mathbb{N}_L}\subset \ell^2(\mathbb{S})$ and suppose that: 
	$$R:=M\,\displaystyle{\max_{j\in \mathbb{S}_N}\sum_{k\in \mathbb{Z}}\left\vert \left(\sum_{l\in \mathbb{N}_L}\mathcal{M}_{g_l-h_l}(j)\mathcal{M}_{g_l-h_l}^*(j)\right)_{0,k}\right\vert}< A.$$
	Then $\mathcal{G}(h,L,M,N)$ is M-D-G frame with frame bounds
	$$A\left(1-\displaystyle{\sqrt{\frac{R}{A}}}\right)^2, \; \;  B\left(1+\displaystyle{\sqrt{\frac{R}{B}}}\right)^2.$$
	\end{theorem}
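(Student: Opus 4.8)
The plan is to deduce the statement from the abstract perturbation result, Lemma~\ref{lem3}, applied with $\{f_i\}_{i\in\mathcal{I}}=\{E_{\frac{m}{M}}T_{nN}g_l\}_{m\in\mathbb{N}_M,n\in\mathbb{Z},l\in\mathbb{N}_L}$ and $\{g_i\}_{i\in\mathcal{I}}=\{E_{\frac{m}{M}}T_{nN}h_l\}_{m\in\mathbb{N}_M,n\in\mathbb{Z},l\in\mathbb{N}_L}$. Since $E_{\frac{m}{M}}$ and $T_{nN}$ are linear, $f_i-g_i=E_{\frac{m}{M}}T_{nN}(g_l-h_l)$, so the quantity to be controlled in Lemma~\ref{lem3} is exactly
$$\sum_{l\in\mathbb{N}_L}\sum_{n\in\mathbb{Z}}\sum_{m\in\mathbb{N}_M}\bigl|\langle f,E_{\frac{m}{M}}T_{nN}(g_l-h_l)\rangle\bigr|^2,$$
which is the left-hand side of Lemma~\ref{lem4} for the M-D-G system $\mathcal{G}(g-h,L,M,N)$ with windows $g_l-h_l\in\ell^2(\mathbb{S})$. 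Thus it suffices to show this sum is $\le R\|f\|^2$ for all $f\in\ell^2(\mathbb{S})$; then, as $R<A$, Lemma~\ref{lem3} yields that $\mathcal{G}(h,L,M,N)$ is a frame with the asserted bounds.

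First I would fix $f\in\ell_0(\mathbb{S})$ and apply Lemma~\ref{lem4}, writing the sum above as $F_1(f)+F_2(f)$ in the notation of the Remark following that lemma, but with each $g_l$ replaced by $g_l-h_l$. Since $F_1(f)\ge0$ (the $(0,0)$-entries being $\sum_n|(g_l-h_l)(j-nN)|^2$) and the $k=0$ term is precisely $F_1(f)$, the triangle inequality gives
$$F_1(f)+F_2(f)\le M\sum_{j\in\mathbb{S}}\sum_{k\in\mathbb{Z}}\left|\left(\sum_{l\in\mathbb{N}_L}\mathcal{M}_{g_l-h_l}(j)\mathcal{M}_{g_l-h_l}^*(j)\right)_{0,k}\right|\,|f(j)|\,|f(j+kM)|.$$
Write $c(j,k)$ for the coefficient $M\bigl|\bigl(\sum_l\mathcal{M}_{g_l-h_l}(j)\mathcal{M}_{g_l-h_l}^*(j)\bigr)_{0,k}\bigr|$. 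By Lemma~\ref{lem5} the matrix $\sum_l\mathcal{M}_{g_l-h_l}(\cdot)\mathcal{M}_{g_l-h_l}^*(\cdot)$ is $N$-periodic, so $\sum_{k\in\mathbb{Z}}c(j,k)\le R$ for every $j\in\mathbb{S}$ by the very definition of $R$; moreover $c(j,k)=0$ whenever $j\notin\mathbb{S}$, since then $(g_l-h_l)(j-nN)=0$ for all $n$.

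Next I would apply $|f(j)|\,|f(j+kM)|\le\tfrac12\bigl(|f(j)|^2+|f(j+kM)|^2\bigr)$ and split the double sum. The $|f(j)|^2$-part equals $\tfrac12\sum_{j\in\mathbb{S}}|f(j)|^2\sum_{k\in\mathbb{Z}}c(j,k)\le\tfrac{R}{2}\|f\|^2$. For the $|f(j+kM)|^2$-part I would re-index by $j'=j+kM$, $k'=-k$, using that $\bigl(\mathcal{M}_\phi(j)\mathcal{M}_\phi^*(j)\bigr)_{0,k}$ is the complex conjugate of $\bigl(\mathcal{M}_\phi(j+kM)\mathcal{M}_\phi^*(j+kM)\bigr)_{0,-k}$ (a direct computation from the definition of $\mathcal{M}_\phi$), hence $c(j,k)=c(j+kM,-k)$; this turns that part into $\tfrac12\sum_{j'\in\mathbb{S}}|f(j')|^2\sum_{k'\in\mathbb{Z}}c(j',k')\le\tfrac{R}{2}\|f\|^2$. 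Adding the two halves gives the bound $R\|f\|^2$ for $f\in\ell_0(\mathbb{S})$, and by density of $\ell_0(\mathbb{S})$ in $\ell^2(\mathbb{S})$ it extends to all of $\ell^2(\mathbb{S})$, completing the reduction. I expect the only delicate point to be the bookkeeping in this re-indexing step: one must apply the Hermitian/shift symmetry of the positive matrices $\sum_l\mathcal{M}_{g_l-h_l}(j)\mathcal{M}_{g_l-h_l}^*(j)$ correctly, and check that the fact that $\mathbb{S}$ is only $N\mathbb{Z}$-periodic (not $M\mathbb{Z}$-periodic) causes no trouble — which it does not, precisely because $f\in\ell^2(\mathbb{S})$ and $c(j,k)$ vanishes off $\mathbb{S}$, so all effective summation indices automatically lie in $\mathbb{S}$.
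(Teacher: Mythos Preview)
Your proposal is correct and follows essentially the same route as the paper: apply Lemma~\ref{lem4} to the difference system $\mathcal{G}(g-h,L,M,N)$, exploit the symmetry $c(j,k)=c(j+kM,-k)$ together with the $N$-periodicity (Lemma~\ref{lem5}) to reduce the cross terms to a diagonal sum bounded by $R\|f\|^2$, extend by density, and then invoke Lemma~\ref{lem3}. The only cosmetic difference is that the paper bounds $|F_2(f)|$ via two applications of Cauchy--Schwarz (first in $j$, then in $k$) before the re-indexing, whereas you use the elementary inequality $|f(j)|\,|f(j+kM)|\le\tfrac12\bigl(|f(j)|^2+|f(j+kM)|^2\bigr)$ directly on $F_1+F_2$; both lead to the identical estimate.
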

	
\begin{proof}
Let $f\in \ell_0(\mathbb{S})$. Applying lemma \ref{lem4} for $g-h=\{g_l-h_l\}_{l\in \mathbb{N}_L}$, we have: 
$$\begin{array}{rcl}
&&\displaystyle{\sum_{l\in \mathbb{N}_L}\sum_{n\in \mathbb{Z}}\sum_{m\in \mathbb{N}_M}\langle f, E_{\frac{m}{M}}T_{nN}(g_l-h_l)\rangle\vert^2}\\&=&M\displaystyle{\sum_{j\in \mathbb{S}} \left(\sum_{l\in \mathbb{N}_L} \mathcal{M}_{g_l-h_l}(j)\mathcal{M}_{g_l-h_l}^*(j)\right)_{0,0} \, \vert f(j)\vert^2}\\
&+&M\displaystyle{\sum_{k\in \mathbb{Z}-\{0\}}\sum_{j\in \mathbb{S}}\left(\sum_{l\in \mathbb{N}_L}\mathcal{M}_{g_l-h_l}(j)\mathcal{M}_{g_l-h_l}^*(j)\right)_{0,k} f(j)\, \overline {f(j+kM)}}\\
&=&F_1(f)+F_2(f).\\ 
\end{array}$$
	On the other hand, applying Cauchy-Schwarz inequality twice, we have for all $f\in \ell_0(\mathbb{S})$,
	$$\begin{array}{rcl}
	\vert F_2(f)\vert &=& \left\vert M\displaystyle{\sum_{k\in \mathbb{Z}-\{0\}}\sum_{j\in \mathbb{S}}\left(\sum_{l\in \mathbb{N}_L}\mathcal{M}_{g_l-h_l}(j)\mathcal{M}_{g_l-h_l}^*(j)\right)_{0,k} f(j)\, \overline {f(j+kM)}}\right\vert\\

	&\leq& M\displaystyle{\sum_{k\in \mathbb{Z}-\{0\}}\sum_{j\in \mathbb{S}}\left\vert \left(\sum_{l\in \mathbb{N}_L}\mathcal{M}_{g_l-h_l}(j)\mathcal{M}_{g_l-h_l}^*(j)\right)_{0,k}\right\vert.\,\vert f(j)f(j+kM)\vert}\\
	&=& M\displaystyle{\sum_{k\in \mathbb{Z}-\{0\}}\sum_{j\in \mathbb{S}}\left\vert \left(\sum_{l\in \mathbb{N}_L}\mathcal{M}_{g_l-h_l}(j)\mathcal{M}_{g_l-h_l}^*(j)\right)_{0,k}\right\vert^{\frac{1}{2}}\,\vert f(j)\vert}\\
	&\times& \left\vert \left(\sum_{l\in \mathbb{N}_L}\mathcal{M}_{g_l-h_l}(j)\mathcal{M}_{g_l-h_l}^*(j)\right)_{0,k}\right\vert^{\frac{1}{2}}\vert f(j+kM)\vert\\
	&\leq& M\displaystyle{\sum_{k\in \mathbb{Z}-\{0\}}\left(\sum_{j\in \mathbb{S}}\left\vert \left(\sum_{l\in \mathbb{N}_L}\mathcal{M}_{g_l-h_l}(j)\mathcal{M}_{g_l-h_l}^*(j)\right)_{0,k}\right\vert\, \vert f(j)\vert^2\right)^{\frac{1}{2}}}\\
	&\times& \displaystyle{\left(\sum_{j\in \mathbb{S}}\left\vert \left(\sum_{l\in \mathbb{N}_L}\mathcal{M}_{g_l-h_l}(j)\mathcal{M}_{g_l-h_l}^*(j)\right)_{0,k}\right\vert\, \vert f(j+kM)\vert^2\right)^{\frac{1}{2}}}\\
&\leq& 	M\left(\displaystyle{\sum_{k\in \mathbb{Z}-\{0\}}\sum_{j\in \mathbb{S}}\left\vert \left(\sum_{l\in \mathbb{N}_L}\mathcal{M}_{g_l-h_l}(j)\mathcal{M}_{g_l-h_l}^*(j)\right)_{0,k}\right\vert\, \vert f(j)\vert^2}\right)^{\frac{1}{2}}\\
	&\times& \left(\displaystyle{\sum_{k\in \mathbb{Z}-\{0\}}\sum_{j\in \mathbb{S}}\left\vert \left(\sum_{l\in \mathbb{N}_L}\mathcal{M}_{g_l-h_l}(j)\mathcal{M}_{g_l-h_l}^*(j)\right)_{0,k}\right\vert\, \vert f(j+kM)\vert^2}\right)^{\frac{1}{2}}.\\
	\end{array}$$
	
	Since for $k\neq 0$, if $j\notin \mathbb{S}$, then $\left(\mathcal{M}_{g_l-h_l}(j)\mathcal{M}_{g_l-h_l}^*(j)\right)_{0,k}=0$, then applying  the substitution variable $k\mapsto -k$ and  $j+kM\mapsto j$, we have:$$\begin{array}{rcl}
	&&\displaystyle{\sum_{k\in \mathbb{Z}-\{0\}}\sum_{j\in \mathbb{S}}\left\vert \left(\sum_{l\in \mathbb{N}_L}\mathcal{M}_{g_l-h_l}(j)\mathcal{M}_{g_l-h_l}^*(j)\right)_{0,k}\right\vert\, \vert f(j+kM)\vert^2}\\&=&
	\displaystyle{\sum_{k\in \mathbb{Z}-\{0\}}\sum_{j\in \mathbb{Z}}\left\vert \left(\sum_{l\in \mathbb{N}_L}\mathcal{M}_{g_l-h_l}(j)\mathcal{M}_{g_l-h_l}^*(j)\right)_{0,-k}\right\vert\, \vert f(j-kM)\vert^2}\\
&=&	\displaystyle{\sum_{k\in \mathbb{Z}-\{0\}}\sum_{j\in \mathbb{S}}\left\vert \left(\sum_{l\in \mathbb{N}_L}\mathcal{M}_{g_l-h_l}(j+kM)\mathcal{M}_{g_l-h_l}^*(j+kM)\right)_{0,-k}\right\vert\, \vert f(j)\vert^2}\\

&=&	\displaystyle{\sum_{k\in \mathbb{Z}-\{0\}}\sum_{j\in \mathbb{S}}\left\vert \left(\sum_{l\in \mathbb{N}_L}\mathcal{M}_{g_l-h_l}(j)\mathcal{M}_{g_l-h_l}^*(j)\right)_{0,k}\right\vert\, \vert f(j)\vert^2}\\
	&=&\displaystyle{\sum_{j\in \mathbb{S}}\sum_{k\in \mathbb{Z}-\{0\}}\left\vert \left(\sum_{l\in \mathbb{N}_L}\mathcal{M}_{g_l-h_l}(j)\mathcal{M}_{g_l-h_l}^*(j)\right)_{0,k}\right\vert\, \vert f(j)\vert^2}. \;\; \; (\text{ since } f\in \ell_0(\mathbb{S})\,)
	\end{array}$$
	Hence: $$F_2(f)\leq  M\displaystyle{\sum_{k\in \mathbb{Z}-\{0\}}\sum_{j\in \mathbb{S}}\left\vert \left(\sum_{l\in \mathbb{N}_L}\mathcal{M}_{g_l-h_l}(j)\mathcal{M}_{g_l-h_l}^*(j)\right)_{0,k}\right\vert\, \vert f(j)\vert^2}.$$
	Hence: $$\begin{array}{rcl}
		&&\displaystyle{\sum_{l\in \mathbb{N}_L}\sum_{n\in \mathbb{Z}}\sum_{m\in \mathbb{N}_M}\langle f, E_{\frac{m}{M}}T_{nN}(g_l-h_l)\rangle\vert^2}\\
		&\leq& \displaystyle{\sum_{j\in \mathbb{S}}\left(M\sum_{k\in \mathbb{Z}}\left\vert \left(\sum_{l\in \mathbb{N}_L}\mathcal{M}_{g_l-h_l}(j) \mathcal{M}_{g_l-h_l}^*(j)\right)_{0,k}\right\vert\right).\; \vert f(j)\vert^2}\\
	&\leq &R\|f\|^2. \;\; (\text{ By N-periodicity  of } \mathcal{M}_{g_l-h_l}(.)\mathcal{M}_{g_l-h_l}^*(.)\;\;)  
	\end{array}$$
	Since $R< A$ and by density of $\ell_0(\mathbb{S})$ in $\ell^2(\mathbb{S})$,  then lemma \ref{lem3} completes the proof.\\
	\end{proof}

	We finish this section by genzralizing the theorem 3.1 in \cite{10} to M-D-G frames for $\ell^2(\mathbb{Z})$. We show, therefore,  that every pair of  M-D-G Bessel systems can generate  pairs of M-D-G duals frames.
	
	\begin{proposition}
	Let $g=\{g_l\}_{l\in \mathbb{N}_L}, h=\{h_l\}_{l\in \mathbb{N}_L}\subset \ell^2(\mathbb{Z})$ such that $\mathcal{G}(g,L,M,N)$ and $\mathcal{G}(h,L,M,N)$ are two Bessel systems. Let $K\in \mathbb{N}$ be such that $N\leq KM$. Then there exist $\{g_l\}_{l=L,...,L+K-1}, \{h_l\}_{l=L,...,L+K-1}\subset \ell^2(\mathbb{Z})$ such that $\mathcal{G}(\;\{g_l\}_{l\in \mathbb{N}_{L+K}}, L+K,M,N)$ and $\mathcal{G}(\;\{h_l\}_{l\in \mathbb{N}_{L+K}}, L+K,M,N)$ form dual frames in $\ell^2(\mathbb{Z})$.\\
	\end{proposition}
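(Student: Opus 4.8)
The plan is to augment the given pair of Bessel systems with $K$ extra windows whose Gabor system is, by itself, a dual pair covering the "defect" between the existing cross-frame operator $S_{h,g}$ and the identity. First I would record that, since $\mathcal{G}(g,L,M,N)$ and $\mathcal{G}(h,L,M,N)$ are Bessel systems, the mixed operator $S_{h,g}:=U_g\theta_h\in B(\ell^2(\mathbb{Z}))$ is well defined and bounded, and by Lemma~\ref{lem2} it commutes with every $E_{\frac{m}{M}}T_{nN}$. The operator $I-S_{h,g}$ therefore also commutes with all $E_{\frac{m}{M}}T_{nN}$. The idea is to realize $I-S_{h,g}$ as a mixed synthesis-analysis operator $S_{h',g'}$ of a new $K$-window pair $g'=\{g_l\}_{l=L}^{L+K-1}$, $h'=\{h_l\}_{l=L}^{L+K-1}$; then for the concatenated systems one gets, for all $f$,
$$
\sum_{l\in\mathbb{N}_{L+K}}\sum_{n\in\mathbb{Z}}\sum_{m\in\mathbb{N}_M}\langle f,E_{\frac{m}{M}}T_{nN}g_l\rangle\,E_{\frac{m}{M}}T_{nN}h_l
= S_{h,g}f+(I-S_{h,g})f=f,
$$
and since both concatenated systems are still Bessel (finite unions of Bessel systems), they are dual M-D-G frames for $\ell^2(\mathbb{Z})$ by the definition of dual frames given in the introduction.

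Next I would carry out the construction of the $K$ new windows. Since $N\le KM$, Theorem~\ref{exframe} (with $L$ there replaced by $K$) gives $\tilde g=\{\tilde g_l\}_{l\in\mathbb{N}_K}\subset\ell^2(\mathbb{Z})$ with $\mathcal{G}(\tilde g,K,M,N)$ a Parseval frame; denote its synthesis operator $\tilde U$, so $\tilde U\tilde U^*=I$, i.e. $\tilde U\theta_{\tilde g}=I$ where $\theta_{\tilde g}=\tilde U^*$. Now set
$$
g_{L+l}:=\tilde g_l\quad(l\in\mathbb{N}_K),\qquad
h_{L+l}:=(I-S_{h,g})^*\,\tilde g_l\quad(l\in\mathbb{N}_K),
$$
where I first check that $(I-S_{h,g})^*$, being a bounded operator, sends the Bessel system $\mathcal{G}(\tilde g,K,M,N)$ to another Bessel system, so $\mathcal{G}(\{h_{L+l}\}_{l\in\mathbb{N}_K},K,M,N)$ is Bessel. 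Because $\tilde g$ is a Parseval frame, its analysis operator $\theta_{\tilde g}$ is an isometry into $\ell^2(\mathcal I)$ and $\tilde U\theta_{\tilde g}=I$; hence the mixed operator of the new pair is
$$
U_{g'}\theta_{h'}=\tilde U\,\theta_{(I-S_{h,g})^*\tilde g}
=\tilde U\,(I-S_{h,g})\,\theta_{\tilde g}
=(I-S_{h,g})\,\tilde U\theta_{\tilde g}=I-S_{h,g},
$$
using that $\theta_{(I-S_{h,g})^*\tilde g}=\theta_{\tilde g}(I-S_{h,g})$ (adjoint of $(I-S_{h,g})^*\tilde U=\,$postcomposition by $(I-S_{h,g})^*$ on synthesis) and that $I-S_{h,g}$ commutes with $\tilde U$ — this last commutation needs the structural fact that $I-S_{h,g}$ commutes with all $E_{\frac{m}{M}}T_{nN}$, which are exactly the generators $\tilde U$ is built from; spelling this out cleanly is where I expect to spend care. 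Adding the two identities gives $U_g\theta_h+U_{g'}\theta_{h'}=S_{h,g}+(I-S_{h,g})=I$, which is the dual-frame reproducing formula for the concatenated systems.

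The main obstacle is the interchange $\tilde U(I-S_{h,g})=(I-S_{h,g})\tilde U$: a priori $\tilde U$ maps the coefficient space to $\ell^2(\mathbb{Z})$ while $I-S_{h,g}$ acts on $\ell^2(\mathbb{Z})$, so "commuting" must be phrased as $\tilde U\,\theta_{\tilde g}\,(I-S_{h,g}) = (I-S_{h,g})\,\tilde U\,\theta_{\tilde g}$, which is automatic once one knows $\tilde U\theta_{\tilde g}=I$ — so in fact the clean route is to avoid moving $I-S_{h,g}$ past $\tilde U$ alone and instead only ever use the composite $\tilde U\theta_{\tilde g}=I$ together with $\theta_{(I-S_{h,g})^*\tilde g}=\theta_{\tilde g}(I-S_{h,g})$, the latter being an elementary identity about analysis operators under postcomposition of the generators by a bounded operator. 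A secondary point to verify is that $h_{L+l}\in\ell^2(\mathbb{Z})$ and that the resulting $\mathcal{G}(h',K,M,N)$ is genuinely Bessel, which follows because applying a fixed bounded operator $T$ to each vector of a Bessel system yields a Bessel system with bound scaled by $\|T\|^2$. Once these two technical points are settled, the conclusion that $\mathcal{G}(\{g_l\}_{l\in\mathbb{N}_{L+K}},L+K,M,N)$ and $\mathcal{G}(\{h_l\}_{l\in\mathbb{N}_{L+K}},L+K,M,N)$ form dual frames in $\ell^2(\mathbb{Z})$ is immediate.
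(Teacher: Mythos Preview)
Your proposal is correct and follows essentially the same route as the paper: take an auxiliary $K$-window Parseval frame $\tilde g$ from Theorem~\ref{exframe}, set the new $g$-windows equal to $\tilde g$ and the new $h$-windows equal to $(I-S_{h,g})^*\tilde g$, and invoke the commutation of Lemma~\ref{lem2} so that the concatenated mixed operator equals $S_{h,g}+(I-S_{h,g})=I$. Your worry about commuting $\tilde U$ with $I-S_{h,g}$ is self-inflicted: the identity you correctly state in the text, $\theta_{(I-S_{h,g})^*\tilde g}=\theta_{\tilde g}(I-S_{h,g})$, is written with the factors reversed in your display, and once corrected it gives $U_{g'}\theta_{h'}=\tilde U\,\theta_{\tilde g}\,(I-S_{h,g})=I-S_{h,g}$ immediately, with no interchange past $\tilde U$ needed at all.
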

	
\begin{proof}
Denote by $U_g$ and $U_h$ the synthesis operators of $\mathcal{G}(g,L,M,N)$ and $\mathcal{G}(h,L,M,N)$ respectively. Define $\Psi:=I-U_gU_h^*$. Let $g':=\{g'_l\}_{l\in \mathbb{N}_K}, \; h':=\{h'_l\}_{l\in \mathbb{N}_K}\in \ell^2(\mathbb{Z})$ such that $\mathcal{G}(g',K,M,N)$ and $\mathcal{G}(h',K,M,N)$ are two dual D-G frames for $\ell^2(\mathbb{Z})$ ( the existence is proved in Theorem \ref{exframe}). Then we have for $f\in \ell^2(\mathbb{Z})$, $$\begin{array}{rcl}
&&f=\Psi(f)+\displaystyle{\sum_{l\in \mathbb{N}_L}\sum_{n\in \mathbb{Z}}\sum_{m\in \mathbb{N}_M}\langle f,E_{\frac{m}{M}}T_{nN}h_l\rangle E_{\frac{m}{M}}T_{nN}g_l}\\
&=& \displaystyle{\sum_{l\in \mathbb{N}_K}\sum_{n\in \mathbb{Z}}\sum_{m\in \mathbb{N}_M}\langle \Psi(f), E_{\frac{m}{M}}T_{nN}h'_l\rangle E_{\frac{m}{M}}T_{nN}g'_l}+\displaystyle{\sum_{l\in \mathbb{N}_L}\sum_{n\in \mathbb{Z}}\sum_{m\in \mathbb{N}_M}\langle f,E_{\frac{m}{M}}T_{nN}h_l\rangle E_{\frac{m}{M}}T_{nN}g_l}\\
&=&\displaystyle{\sum_{l\in \mathbb{N}_K}\sum_{n\in \mathbb{Z}}\sum_{m\in \mathbb{N}_M}\langle f, \Psi^*E_{\frac{m}{M}}T_{nN}h'_l\rangle E_{\frac{m}{M}}T_{nN}g'_l}+\displaystyle{\sum_{l\in \mathbb{N}_L}\sum_{n\in \mathbb{Z}}\sum_{m\in \mathbb{N}_M}\langle f,E_{\frac{m}{M}}T_{nN}h_l\rangle E_{\frac{m}{M}}T_{nN}g_l}.\\
\end{array}$$
By lemma \ref{lem2}, we have $U_hU_g^*E_{\frac{m}{M}}T_{nN}=E_{\frac{m}{M}}T_{nN} U_hU_g^*$. Then $\Psi^* E_{\frac{m}{M}}T_{nN}=E_{\frac{m}{M}}T_{nN}\Psi^*$. Hence for all $f\in \ell^2(\mathbb{Z})$, we have: $$
f=\displaystyle{\sum_{l\in \mathbb{N}_K}\sum_{n\in \mathbb{Z}}\sum_{m\in \mathbb{N}_M}\langle f, E_{\frac{m}{M}}T_{nN}\Psi^*h'_l\rangle E_{\frac{m}{M}}T_{nN}g'_l}+\displaystyle{\sum_{l\in \mathbb{N}_L}\sum_{n\in \mathbb{Z}}\sum_{m\in \mathbb{N}_M}\langle f,E_{\frac{m}{M}}T_{nN}h_l\rangle E_{\frac{m}{M}}T_{nN}g_l}.
$$
Take for all $l\in \mathbb{N}_K$, $g_{L+l}=g'_l$ and $h_{L+l}=\Psi^*\,h'_l$, then for all $f\in \ell^2(\mathbb{Z})$, we have
$$f=\displaystyle{\sum_{l\in \mathbb{N}_{L+K}}\sum_{n\in \mathbb{Z}}\sum_{m\in \mathbb{N}_M}\langle f,E_{\frac{m}{M}}T_{nN}h_l\rangle E_{\frac{m}{M}}T_{nN}g_l}.$$
Hence $\mathcal{G}(\;\{g_l\}_{l\in \mathbb{N}_{L+K}}, L+1,M,N)$ and $\mathcal{G}(\;\{h_l\}_{l\in \mathbb{N}_{L+K}}, L+1,M,N)$ form dual frames in $\ell^2(\mathbb{Z})$.\\
\end{proof}	

\section{ M-D-G frames for $\ell^2(\mathbb{Z})$ and discrete Zak transform}
In this section, we study M-D-G complete systems and M-D-G frames for $\ell^2(\mathbb{Z})$ in the case of $N=M$ and then  M-D-G bases for $\ell^2(\mathbb{Z})$.\\
	
We start by the case $M=N$. 
Let $g:=\{g_l\}_{l\in \mathbb{N}_L}\subset \ell^2(\mathbb{Z})$ and denote for all $l\in \mathbb{N}_L, m\in \mathbb{N}_M, n\in \mathbb{Z}$,  $\;g_{m,n,l}:=E_{\frac{m}{M}}T_{nN}g_l$. Under the condition $M=N$, we have: $$\begin{array}{rcl}
z_Mg_{m,n,l}(j,\theta)&=&\displaystyle{\sum_{k\in \mathbb{Z}}g_{m,n,l}(j+kM)e^{2\pi i k\theta}}\\
&=& \displaystyle{\sum_{k\in \mathbb{Z}}e^{2\pi i\frac{m}{M}(j+kM)}g_l(j+kM-nM)e^{2\pi i k \theta}}\\
&=& e^{2\pi i \frac{m}{M}j}\, e^{2\pi i n\theta}\, \displaystyle{\sum_{k\in \mathbb{Z}}g_l(j+(k-n)M)e^{2\pi i (k-n)\theta}}\\
&=& E_{(\frac{m}{M},n)}(j,\theta). z_M g_l(j,\theta)\\  \\
&=& (E_{(\frac{m}{M},n)}.z_Mg_l)(j,\theta).\\
\end{array}	$$
What makes this cas of $M=N$ interesting is the fact that $\{\displaystyle{\frac{1}{\sqrt{M}}}E_{(\frac{m}{M},n)}\}_{m\in \mathbb{N}_M, n\in \mathbb{Z}}$ is an orthonormal basis for $\ell^2(q)$.\\

The following result characterizes  complete M-D-G systems  for $\ell^2(\mathbb{Z})$ whenever $N=M$.
	\begin{theorem}\label{nm}
Assume that $N=M$ and	let $g:=\{g_l\}_{l\in \mathbb{N}_L}\subset \ell^2(\mathbb{Z})$ and $\mathcal{G}(g,L,M,N)$ be the associated M-D-G system. Then following statements are equivalent:
	\begin{enumerate}
	\item $\mathcal{G}(g,L,M,N)$ is complete in $\ell^2(\mathbb{Z})$.
	\item $\forall j\in \mathbb{N}_M,\; mes\left( \displaystyle{\bigcap_{l\in \mathbb{N}_L}Z^q_j(g_l)}\right)=0$,
	where $Z^q_j(g_l):=\{\theta\in  [0,1[:\; z_Mg_l(j,\theta)=0\}$.
	\item $\forall j\in \mathbb{Z},\; mes\left( \displaystyle{\bigcap_{l\in \mathbb{N}_L}Z_j(g_l)}\right)=0$,
	where $Z_j(g_l):=\{\theta\in  \mathbb{R}:\; z_Mg_l(j,\theta)=0\}$.\\
	\end{enumerate}
	\end{theorem}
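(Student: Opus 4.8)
The plan is to push the problem through the Zak transform $z_M$, where (since $M=N$) the Gabor system becomes a family of products $E_{(\frac{m}{M},n)}\cdot z_Mg_l$ with the exponential orthonormal system of $\ell^2(q)$, and then to describe the orthogonal complement of the transformed system. First, since $z_M:\ell^2(\mathbb{Z})\to\ell^2(q)$ is unitary, $\mathcal{G}(g,L,M,N)$ is complete in $\ell^2(\mathbb{Z})$ if and only if $\{z_Mg_{m,n,l}:m\in\mathbb{N}_M,\ n\in\mathbb{Z},\ l\in\mathbb{N}_L\}=\{E_{(\frac{m}{M},n)}\,z_Mg_l\}$ is complete in $\ell^2(q)$, i.e. its orthogonal complement there is $\{0\}$. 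So I will characterise all $\psi\in\ell^2(q)$ with $\langle\psi,E_{(\frac{m}{M},n)}z_Mg_l\rangle=0$ for all $m,n,l$.

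Writing out the $\ell^2(q)$ inner product gives
$$\langle\psi,E_{(\frac{m}{M},n)}z_Mg_l\rangle=\sum_{j=0}^{M-1}e^{-2\pi i\frac{m}{M}j}\int_0^1\psi(j,\theta)\,\overline{z_Mg_l(j,\theta)}\,e^{-2\pi i n\theta}\,d\theta .$$
For fixed $n,l$ the finite sequence $j\mapsto\int_0^1\psi(j,\theta)\overline{z_Mg_l(j,\theta)}e^{-2\pi i n\theta}\,d\theta$, extended $M$-periodically, lies in $\ell^2(\mathbb{N}_M)$, and vanishing of the display for all $m$ forces it to be zero, because $\{\frac{1}{\sqrt M}e^{2\pi i\frac{m}{M}\cdot}\}_{m\in\mathbb{N}_M}$ is an orthonormal basis of $\ell^2(\mathbb{N}_M)$ (Lemma \ref{lem1'}). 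Hence $\int_0^1\psi(j,\theta)\overline{z_Mg_l(j,\theta)}e^{-2\pi i n\theta}\,d\theta=0$ for all $j\in\mathbb{N}_M$, $n\in\mathbb{Z}$, $l\in\mathbb{N}_L$; for fixed $j,l$ the function $\theta\mapsto\psi(j,\theta)\overline{z_Mg_l(j,\theta)}$ is a product of two elements of $L^2([0,1[)$, hence lies in $L^1([0,1[)$ with all Fourier coefficients zero, so it vanishes a.e. Thus $\psi(j,\cdot)=0$ a.e. off $Z^q_j(g_l)$ for every $l$, that is $supp(\psi(j,\cdot))\subseteq\bigcap_{l\in\mathbb{N}_L}Z^q_j(g_l)$ up to a null set; conversely, reversing the computation shows every $\psi$ with this property for all $j$ is orthogonal to the whole system.

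It remains to read off the two equivalences. By the previous step the orthogonal complement is precisely the set of $\psi\in\ell^2(q)$ with $supp(\psi(j,\cdot))\subseteq\bigcap_l Z^q_j(g_l)$ (mod null sets) for every $j\in\mathbb{N}_M$; if $mes(\bigcap_l Z^q_{j_0}(g_l))>0$ for some $j_0$, the function equal to $\chi_{\bigcap_l Z^q_{j_0}(g_l)}$ on the slice $j=j_0$ and $0$ elsewhere is a nonzero element of this complement, whereas if all these intersections are null the complement is $\{0\}$; this gives $(1)\Leftrightarrow(2)$. For $(2)\Leftrightarrow(3)$ I use only the quasi-periodicity relations $z_Mg_l(j+M,\theta)=e^{-2\pi i\theta}z_Mg_l(j,\theta)$ and $z_Mg_l(j,\theta+1)=z_Mg_l(j,\theta)$: the first shows the zero set $Z_j(g_l)$ depends only on $j\bmod M$, and together with $1$-periodicity in $\theta$ it identifies $Z_j(g_l)$ with the $\mathbb{Z}$-periodisation of $Z^q_{j\bmod M}(g_l)$; hence $\bigcap_l Z_j(g_l)$ is the periodisation of $\bigcap_l Z^q_{j\bmod M}(g_l)$, and has Lebesgue measure zero in $\mathbb{R}$ iff $\bigcap_l Z^q_{j\bmod M}(g_l)$ has measure zero in $[0,1[$. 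Since $j\bmod M$ runs over $\mathbb{N}_M$ as $j$ runs over $\mathbb{Z}$, $(2)$ and $(3)$ are equivalent.

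The only genuinely delicate point is in the middle step: the product $\psi(j,\cdot)\overline{z_Mg_l(j,\cdot)}$ is merely $L^1$, not $L^2$, so one cannot directly invoke completeness of the exponentials in $L^2$; the way around this is to eliminate the variable $m$ first (via Lemma \ref{lem1'}) and then apply uniqueness of Fourier coefficients of integrable functions. The rest is routine null-set bookkeeping together with the unitarity of $z_M$.
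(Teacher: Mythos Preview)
Your proof is correct and follows essentially the same route as the paper: transfer to $\ell^2(q)$ via the unitary $z_M$, identify the orthogonal complement of $\{E_{(\frac{m}{M},n)}z_Mg_l\}$ through the product structure, and handle $(2)\Leftrightarrow(3)$ by quasi-periodicity. Your separation of variables (eliminating $m$ via Lemma~\ref{lem1'} and then $n$ via uniqueness of $L^1$ Fourier coefficients) is a slightly cleaner treatment of the integrability issue than the paper's appeal to completeness of the exponentials ``also in $\ell^1(q)$'', but the argument is the same in substance.
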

	\begin{proof}\hspace{1cm}
	\begin{enumerate}
	\item[]$(1)\Longrightarrow(2):$  Assume that $\mathcal{G}(g,L,M,N)$ is complete in $\ell^2(\mathbb{Z})$, then $\{E_{(\frac{m}{M},n)}z_Mg_l\}_{m\in \mathbb{N}_M, n\in \mathbb{Z}, l\in \mathbb{N}_L}$ is complete in $\ell^2(q)$ by the unitarity of  $z_M$. Fix $j_0\in \mathbb{N}_M$ and  define $\phi \in \ell^2(q)$ as follows: $$\phi(j,\theta):=\left\lbrace
	\begin{array}{rcl}
	&1&\;\;\; \text{ if } j=j_0 \text{ and } \forall l\in \mathbb{N}_L,\; z_Mg_l(j,\theta)=0\\
	&0& \;\;\; \text{ otherwise}.
	\end{array}
	\right.$$
	We have for all $l\in \mathbb{N}_L$, $n\in \mathbb{Z}$ and $m\in \mathbb{N}_M$,
	 $$\begin{array}{rcl}
	\langle z_Mg_{m,n,l},\phi\rangle&=&\langle E_{(\frac{m}{M},n)}.z_Mg_l,\phi\rangle\\
	&=& \langle E_{(\frac{m}{M},n)},\phi. \overline{z_Mg_l}\rangle\;\; (\text{ Since }\phi \text{ is bounded, then }\phi. \overline{z_Mg_l}\in \ell^2(q)\;)\\
	&=& 0.
	\end{array}$$
Then $\phi=0$ a.e. Thus, by definition of $\phi$, we have $mes\left(\{j_0\}\times \displaystyle{\bigcap_{l\in \mathbb{N}_L}Z^q_{j_0}(g_l)}\right)=0$. Hence $mes\left(\displaystyle{\bigcap_{l\in \mathbb{N}_L}Z^q_{j_0}(g_l)}\right)=0$ (for all $j_0\in \mathbb{N}_M$). \\
\item[]$(2)\Longrightarrow(1)$: Assume that for all $j\in \mathbb{N}_M$, $mes\left(\displaystyle{\bigcap_{l\in \mathbb{N}_L}Z^q_j(g_l)}\right)=0$. Let $\phi\in \ell^2(q)$ such that for all $l\in \mathbb{N}_L$, $n\in \mathbb{Z}$ and $m\in \mathbb{N}_M$, $\langle z_Mg_{m,n,l},\phi\rangle=0$, i.e. $\langle E_{(\frac{m}{M},n)}.z_Mg_l,\phi\rangle=0$. Then $\langle E_{(\frac{m}{M},n)}, \phi .\overline{z_Mg_l}\rangle=0$, thus $\phi .\overline{z_Mg_l}=0$ since $\{E_{(\frac{m}{M},n)}\}_{m\in \mathbb{N}_M, n\in \mathbb{Z}}$ is complete also in $\ell^1(q)$. Hence for all $j\in \mathbb{N}_M$, $\phi(j,.)=0$ a.e. Thus $\|\phi\|^2=\displaystyle{\sum_{j\in \mathbb{N}_M}\int_0^1\vert \phi(j,\theta)\vert^2}=0$, then $\phi=0$ a.e. Thus $\{z_Mg_{m,n,l}\}$ is complete in $\ell^2(q)$. Hence $\mathcal{G}(g,L,M,N)$ is complete in $\ell^2(\mathbb{Z})$ by the unitarity of $z_M$. \\
\item[]$(2)\Longrightarrow(3):$ Let $j\in \mathbb{N}_M$, let's show that $\displaystyle{\bigcap_{l\in \mathbb{N}_L}Z_jg_l=\bigcup_{p\in \mathbb{Z}}\left(\bigcap_{l\in \mathbb{N}_L}Z_j^q(g_l)+p\right)}$.\\
We have for all $\theta\in \mathbb{R}$, $p\in \mathbb{Z}$, $z_Mg_l(j,\theta+p)=z_Mg_l(j,\theta)$, then:
$$\begin{array}{rcl}
\theta \in \displaystyle{\bigcap_{l\in \mathbb{N}_L}Z_j(g_l)}&\Longrightarrow& \forall l\in \mathbb{N}_L,\;  z_Mg_l(j,\theta)=0\\
&\Longrightarrow& \forall l\in \mathbb{N}_L,\;  z_Mg_l(j,\theta_0+p)=0\;\;\; \text{ where }\theta=\theta_0+p \text{ and } \theta_0\in [0,1[\\
&\Longrightarrow& \forall l\in \mathbb{N}_L,\;  z_Mg_l(j,\theta_0)=0\\
&\Longrightarrow&\forall l\in \mathbb{N}_L,\; \theta_0\in Z^q_j(g_l)\\
&\Longrightarrow& \theta_0\in \displaystyle{\bigcap_{l\in \mathbb{N}_L}Z^q_j(g_l)}\\
&\Longrightarrow& \theta \in \displaystyle{\bigcap_{l\in \mathbb{N}_L}Z^q_j(g_l)}+p\\
&\Longrightarrow& \theta \in \displaystyle{\bigcup_{p\in \mathbb{Z}}\left(\bigcap_{l\in \mathbb{N}_L}Z^q_j(g_l)+p\right)}.
\end{array}$$
Conversely, let $\theta \in \displaystyle{\bigcup_{p\in \mathbb{Z}}\left(\bigcap_{l\in \mathbb{N}_L}Z^q_j(g_l)+p\right)}$, then there exist $p\in \mathbb{Z}$, $\theta_0\in \displaystyle{\bigcap_{l\in \mathbb{N}_L}Z^q_j(g_l)}$ such that $\theta =\theta_0+p$. Since $\forall l\in \mathbb{N}_M$, $z_Mg_l(j,\theta_0)=0$, then $\forall l\in \mathbb{N}_M$, $z_Mg_l(j,\theta_0+p)=0$. Hence $\theta \in \displaystyle{\bigcap_{l\in \mathbb{N}_L}Z_j(g_l)}$. \\
We have for all $j,k\in \mathbb{Z}$, $\theta\in \mathbb{R}$, $z_Mg_l(j,\theta)=e^{-2\pi i k\theta}z_Mg_l(j+kM,\theta)$. Let $j\in \mathbb{Z}$ and let $j_0\in \mathbb{N}_M$, $k\in \mathbb{Z}$ such that $j=j_0+kM$. Then for all $l\in \mathbb{N}_L$, $Z_j(g_l)=Z_{j_0}(g_l)$. Hence:
$$\displaystyle{\bigcap_{l\in \mathbb{N}_L}Z_j(g_l)= \displaystyle{\bigcap_{l\in \mathbb{N}_L}Z_{j_0}(g_l)= \bigcup_{p\in \mathbb{Z}}\left(\bigcap_{l\in \mathbb{N}_L}Z_{j_0}^q(g_l)+p\right)}}.$$
Since Lebsegue measure is invariant by translations and that a countable union of measure zero sets is also a measure zero set, then we obtain the equivalence $(2)\Longleftrightarrow (3)$.\\
\end{enumerate}
\end{proof}

	The following result characterizes   M-D-G frames for $\ell^2(\mathbb{Z})$ whenever $N=M$.
\begin{theorem}\label{delta}
Assume that $N=M$ and Let $\mathcal{G}(g,L,M,N)$ be a M-D-G system in $\ell^2(\mathbb{Z})$. Then the following statements are equivalent:
	\begin{enumerate}
	\item $\mathcal{G}(g,L,M,N)$ is a frame for $\ell^2(\mathbb{Z})$ with frame bounds $A$ and $B$.
	 \item  $\forall j\in \mathbb{N}_M,\;\displaystyle{\frac{A}{M}}\leq \displaystyle{\sum_{l\in \mathbb{N}_L}\vert z_Mg_l(j,.)\vert^2}\leq \displaystyle{\frac{B}{M}} \; \,\text{ a.e on }[0,1[.$
	\item  $\forall j\in \mathbb{Z},\;\displaystyle{\frac{A}{M}}\leq \displaystyle{\sum_{l\in \mathbb{N}_L}\vert z_Mg_l(j,.)\vert^2}\leq \displaystyle{\frac{B}{M}} \; \,\text{ a.e on }\mathbb{R}.$\\
\end{enumerate}
\end{theorem}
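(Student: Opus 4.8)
The plan is to transport the frame inequality to $\ell^2(q)$ through the discrete Zak transform $z_M$ (which is unitary) and to recognize the resulting inequality as that of a multiplication operator, exactly as in the proof of Theorem~\ref{nm}. Write $g_{m,n,l}:=E_{\frac{m}{M}}T_{nN}g_l$, recall the identity $z_Mg_{m,n,l}=E_{(\frac{m}{M},n)}\cdot z_Mg_l$ established above and that $\{\tfrac{1}{\sqrt M}E_{(\frac{m}{M},n)}\}_{m\in\mathbb{N}_M,n\in\mathbb{Z}}$ is an orthonormal basis for $\ell^2(q)$, and set $G(j,\theta):=\sum_{l\in\mathbb{N}_L}|z_Mg_l(j,\theta)|^2$. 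For $\psi\in\ell^2(q)$ one has $\langle\psi,z_Mg_{m,n,l}\rangle=\langle\psi\,\overline{z_Mg_l},E_{(\frac{m}{M},n)}\rangle$, so whenever $\psi\,\overline{z_Mg_l}\in\ell^2(q)$ Parseval's identity for the above orthonormal basis gives $\sum_{m\in\mathbb{N}_M,n\in\mathbb{Z}}|\langle\psi,z_Mg_{m,n,l}\rangle|^2=M\sum_{j\in\mathbb{N}_M}\int_0^1|\psi(j,\theta)|^2|z_Mg_l(j,\theta)|^2\,d\theta$. Summing over $l$ and using that $z_M$ is unitary from $\ell^2(\mathbb{Z})$ onto $\ell^2(q)$, statement (1) with bounds $A,B$ is equivalent to
$$\frac{A}{M}\|\psi\|^2\ \le\ \sum_{j\in\mathbb{N}_M}\int_0^1|\psi(j,\theta)|^2\,G(j,\theta)\,d\theta\ \le\ \frac{B}{M}\|\psi\|^2,\qquad \psi\in\ell^2(q).$$

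For $(1)\Rightarrow(2)$ I would test this on $\psi=\chi_{\{j_0\}\times E}$ with $j_0\in\mathbb{N}_M$ fixed and $E\subset[0,1[$ measurable: since $g_l\in\ell^2(\mathbb{Z})$ forces $z_Mg_l(j_0,\cdot)\in L^2([0,1[)$ automatically, $\psi\,\overline{z_Mg_l}\in\ell^2(q)$ and the displayed identity is legitimate, yielding $\frac{A}{M}\,|E|\le\int_E G(j_0,\theta)\,d\theta\le\frac{B}{M}\,|E|$ for every such $E$; a standard contradiction argument on a putative positive-measure set where $G(j_0,\cdot)$ violates one of the bounds then gives $\frac{A}{M}\le G(j_0,\cdot)\le\frac{B}{M}$ a.e.\ on $[0,1[$, i.e.\ (2). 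For $(2)\Rightarrow(1)$, the upper bound in (2) forces $|z_Mg_l|\le\sqrt{B/M}$ a.e., so $z_Mg_l\in L^\infty(q)$ and hence $\psi\,\overline{z_Mg_l}\in\ell^2(q)$ for \emph{every} $\psi\in\ell^2(q)$; the displayed identity is then valid for all $\psi$, and integrating the pointwise estimate $\frac{A}{M}\le G\le\frac{B}{M}$ against $|\psi|^2$ and summing over $j\in\mathbb{N}_M$ reproduces the two-sided inequality above, so $\{z_Mg_{m,n,l}\}$ — and therefore $\mathcal{G}(g,L,M,N)$ via unitarity of $z_M$ — is a frame with bounds $A,B$.

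Finally, $(2)\Leftrightarrow(3)$ is obtained exactly as in Theorem~\ref{nm}: each $z_Mg_l(j,\cdot)$ is $1$-periodic, so ``a.e.\ on $[0,1[$'' and ``a.e.\ on $\mathbb{R}$'' agree for fixed $j$, and the quasi-periodicity $z_Mg_l(j+kM,\theta)=e^{2\pi i k\theta}z_Mg_l(j,\theta)$ gives $G(j+kM,\theta)=G(j,\theta)$, so the bound for all $j\in\mathbb{N}_M$ is equivalent to the bound for all $j\in\mathbb{Z}$.

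I expect the only genuine obstacle to be the integrability bookkeeping: justifying the interchange of the $l$-sum with the integral, and — in the direction $(2)\Rightarrow(1)$ — using the $L^\infty$-bound on $z_Mg_l$ extracted from the upper frame bound to license Parseval's identity for arbitrary $\psi\in\ell^2(q)$ rather than only for bounded or finitely supported test functions. Once this is settled the argument is a routine multiplication-operator computation parallel to Theorem~\ref{nm}.
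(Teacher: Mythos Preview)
Your proposal is correct and follows essentially the same route as the paper: transport via the unitary Zak transform, the identity $z_Mg_{m,n,l}=E_{(\frac{m}{M},n)}\cdot z_Mg_l$, Parseval for the orthonormal basis $\{M^{-1/2}E_{(\frac{m}{M},n)}\}$, and recognition of the frame sum as integration against the multiplier $G$. The only cosmetic difference is that in $(1)\Rightarrow(2)$ you test directly on characteristic functions $\chi_{\{j_0\}\times E}$ (deducing the pointwise a.e.\ bounds from $\tfrac{A}{M}|E|\le\int_E G\le\tfrac{B}{M}|E|$), whereas the paper tests on $\delta_{j,j_0}f(\theta)$ for bounded $f$ and runs a layer-decomposition contradiction; your version is slightly cleaner but logically equivalent.
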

	
	\begin{proof}\hspace{1cm}
	\begin{enumerate}
	\item[]$(1)\Longrightarrow (2)$: Assume that $\mathcal{G}(g,L,M,N)$ is a frame for $\ell^2(\mathbb{Z})$ with frame bounds $A$ and $B$. Then $\{z_Mg_{m,n,l}\}$ is a frame for $\ell^2(q)$ with frame bounds $A$ and $B$ by the unitarity of $z_M$. Then for all $F\in \ell^2(q)$ bounded, we have: $$A\|F\|^2\leq \displaystyle{\sum_{l\in \mathbb{N}_L}\sum_{n\in \mathbb{Z}}\sum_{m\in \mathbb{N}_M}\vert \langle F, E_{(\frac{m}{M},n)}.z_Mg_l\rangle\vert^2}\leq B\|F\|^2.$$
We have $F.\overline{z_Mg_l}\in \ell^2(q)$ since $F$ is bounded. Then: 
$$A\|F\|^2\leq \displaystyle{\sum_{l\in \mathbb{N}_L}\sum_{n\in \mathbb{Z}}\sum_{m\in \mathbb{N}_M}\vert \langle F.\overline{z_Mg_l}, E_{(\frac{m}{M},n)}\rangle\vert^2}\leq B\|F\|^2.$$
Thus: 
$$A\|F\|^2\leq M\displaystyle{\sum_{l\in \mathbb{N}_L}\|  F.\overline{z_Mg_l}\|^2}\leq B\|F\|^2.$$
Then: $$A\displaystyle{\sum_{j\in \mathbb{N}_M}\int_{0}^1\vert F(j,\theta)\vert^2\, d\theta}\leq M\displaystyle{\sum_{l\in \mathbb{N}_L}\sum_{j\in \mathbb{N}_M}\int_0^1 \vert F(j,\theta).z_Mg_l(j,\theta)\vert^2}\leq B\displaystyle{\sum_{j\in \mathbb{N}_M}\int_{0}^1\vert F(j,\theta)\vert^2\, d\theta}.$$
Fix $j_0\in \mathbb{N}_M$ and $f\in L^2([0,1[)$ bounded. Define for all $(j,\theta)\in q$, $F_0(j,\theta):=\delta_{j,j_0}.\, f(\theta)$. Applying the above inequality on $F_0$, we obtain for all $f\in L^2([0,1[)$ bounded:
\begin{equation}
A\displaystyle{\int_0^1\vert f(\theta)\vert^2\, d\theta}\leq M\displaystyle{\int_{0}^1 \vert f(\theta)\vert^2.\sum_{l\in \mathbb{N}_L}\vert \, z_Mg_l(j_0,\theta)\vert^2 \, d\theta}\leq B\displaystyle{\int_0^1\vert f(\theta)\vert^2\, d\theta}.
\end{equation}
Let's show, now, that $A\leq M \displaystyle{\sum_{l\in \mathbb{N}_L}\vert z_Mg_l(j_0,\theta)\vert^2}$ a.e. Suppose , by contradiction, that $mes(\Delta)>0$, where $\Delta:=\{\theta \in [0,1[:\; A>M \displaystyle{\sum_{l\in \mathbb{N}_L}\vert z_Mg_l(j_0,\theta)\vert^2}\}$. For all $k\in \mathbb{N}$, define: $$\Delta_k:=\{\theta \in [0,1[:\;    A- \displaystyle{\frac{A}{k}}\leq M\displaystyle{\sum_{l\in \mathbb{N}_L}\vert z_Mg_l(j_0,\theta)\vert^2}\leq A- \displaystyle{\frac{A}{k+1}}\}.$$
We have, then, $\displaystyle{\bigcup_{k\in \mathbb{N}}\Delta_k}=\Delta$. Since $mes(\Delta)>0$, then there exists $k_0\in \mathbb{N}$ such that $mes(\Delta_{k_0})> 0$. Take  $f_0=\chi_{\Delta_{k_0}}$, then, by (4.1), we have: $$\begin{array}{rcl}
M\displaystyle{\int_0^1 \sum_{l\in \mathbb{N}_L}\vert f_0(\theta).z_Mg_l(j,\theta)\vert^2}&=&M\displaystyle{\int_{\Delta_{k_0}} \sum_{l\in \mathbb{N}_L}\vert z_Mg_l(j,\theta)\vert^2}\\
&\leq& \left(A-\displaystyle{\frac{A}{k+1}}\right). mes(\Delta_{k_0})\\
&=&  \left(A-\displaystyle{\frac{A}{k+1}}\right). \|f_0\|^2\\
&<& A\|f_0\|^2.
\end{array}$$
Contradiction with (1). Then for all $j_0\in \mathbb{N}_M$, $A\leq M \displaystyle{\sum_{l\in \mathbb{N}_L}\vert z_Mg_l(j_0,\theta)\vert^2}$ a.e.\\
Let's show, also, that $ M \displaystyle{\sum_{l\in \mathbb{N}_L}\vert z_Mg_l(j_0,\theta)\vert^2}\leq B$ a.e. Suppose, by contradiction again, that $mes(D)>0$, where $D:=\{\theta \in [0,1[:\; B<M \displaystyle{\sum_{l\in \mathbb{N}_L}\vert z_Mg_l(j_0,\theta)\vert^2}\}$. For all $k\in \mathbb{N}, m\in \mathbb{N}$, define: $$
D_{k,m}:=\{\theta \in [0,1[:\; \displaystyle{B(k+\frac{1}{m+1})}\leq M \displaystyle{\sum_{l\in \mathbb{N}_L}\vert z_Mg_l(j_0,\theta)\vert^2}\}\leq \displaystyle{B(k+\frac{1}{m})}\}.$$
We have, then, $\displaystyle{\bigcup_{k\in \mathbb{N}_0,m\in \mathbb{N}}D_{k,m}}=D$. Since $mes(D)>0$, then there exists $k_0\in \mathbb{N}$ and $m_0\in \mathbb{N}$ such that $mes(D_{k_0,m_0})> 0$. Take  $f_0=\chi_{D_{k_0,m_0}}$, then we have: $$\begin{array}{rcl}
M\displaystyle{\int_0^1 \sum_{l\in \mathbb{N}_L}\vert f_0(\theta).z_Mg_l(j,\theta)\vert^2}&=&M\displaystyle{\int_{D_{k_0,m_0}} \sum_{l\in \mathbb{N}_L}\vert z_Mg_l(j,\theta)\vert^2}\\
&\geqslant & B\left(k_0+\displaystyle{\frac{1}{m_0}}\right). mes(D_{k_0,m_0})\\
&=&  B\left(k_0+\displaystyle{\frac{1}{m_0}}\right). \|f_0\|^2\\
&>& B\|f_0\|^2.
\end{array}$$
Contradiction with (1). Then for all $j_0\in \mathbb{N}_M$, $ M \displaystyle{\sum_{l\in \mathbb{N}_L}\vert z_Mg_l(j_0,\theta)\vert^2}\leq B$ a.e.\\
Hence: $$\displaystyle{\frac{A}{M}}\leq \displaystyle{\sum_{l\in \mathbb{N}_L}\vert z_Mg_l(.,.)\vert^2}\leq \displaystyle{\frac{B}{M}} \; \,\text{ a.e}.$$
\item[$2\Longrightarrow1)$] Assume that for all $j\in \mathbb{N}_M$, $\displaystyle{\frac{A}{M}}\leq \displaystyle{\sum_{l\in \mathbb{N}_L}\vert z_Mg_l(j,.)\vert^2}\leq \displaystyle{\frac{B}{M}} \; \,\text{ a.e}.$ Since for all $l\in \mathbb{N}_L$,  $z_Mg_l$ is bounded then for all $F\in \ell^2(q)$, $F.\overline{z_Mg_l}\in \ell^2(q)$. Then we have 
$$\begin{array}{rcl}
\displaystyle{\sum_{l\in \mathbb{N}_L}\sum_{n\in \mathbb{Z}}\sum_{m\in \mathbb{N}_M}\vert \langle F,E_{(\frac{m}{M},n)}.z_Mg_l\rangle \vert^2}&=&\displaystyle{\sum_{l\in \mathbb{N}_L}\sum_{n\in \mathbb{Z}}\sum_{m\in \mathbb{N}_M}\vert \langle F.\overline{z_Mg_l},E_{(\frac{m}{M},n)}\rangle \vert^2}\\
&=& M\displaystyle{\sum_{l\in \mathbb{N}_L}\| F.\overline{z_Mg_l}\|^2}\\
&=& M\displaystyle{\sum_{l\in \mathbb{N}_L}\sum_{j\in \mathbb{N}_M}\int_0^1\vert F(j,\theta).z_Mg_l(j,\theta)\vert^2}\\
&=& \displaystyle{\sum_{j\in \mathbb{N}_M}\int_0^1\,\left(M\sum_{l\in \mathbb{N}_L}\vert z_Mg_l(j,\theta)\vert^2\right). \, \vert F(j,\theta)\vert^2}.\\
\end{array}$$
Then for all $F\in \ell^2(q)$, we have: 
$$A\|F\|^2\leq \displaystyle{\sum_{l\in \mathbb{N}_L}\sum_{n\in \mathbb{Z}}\sum_{m\in \mathbb{N}_M}\vert \langle F,E_{(\frac{m}{M},n)}.z_Mg_l\rangle \vert^2}\leq B\|F\|^2.$$
Thus $\{E_{(\frac{m}{M},n)}.z_Mg_l\}$ is a frame for $\ell^2(q)$ with frame bounds $A$ and $B$. Hence $\mathcal{G}(g,L,M,N)$ is a frame for $\ell^2(\mathbb{Z})$ with frame bounds $A$ and $B$ by the uinitarity of $z_M$.\\ 
\item[]$(2)\Longleftrightarrow(3):$ Since for all $j,k,p\in \mathbb{Z}$, $z_Mg_l(j+kM,\theta+p)=e^{-2\pi i k\theta}z_Mg_l(j,\theta)$, then for all $j,k,p\in \mathbb{Z}$, $\displaystyle{\sum_{l\in \mathbb{N}_L}\vert z_Mg_l(j+kM,\theta+p)\vert^2} =\displaystyle{\sum_{l\in \mathbb{N}_L}\vert z_Mg_l(j,\theta)\vert^2}$.\\
Denote $A_j:=\{\theta \in \mathbb{R}: \text{ the inequality in (2) does not hold}\,\}$ and $A^q_j:=\{\theta \in [0,1[: \text{ the inequality in (2) does not hold}\,\}$. We show, easily, that for all $j\in \mathbb{N}_M$, $A_j=\displaystyle{\bigcup_{p\in \mathbb{Z}}A^q_j+p}$. For $j\in \mathbb{Z}$, let $j_0\in \mathbb{N}_M$ and $k\in \mathbb{Z}$ such that $j=j_0+kM$, then $A_j=\displaystyle{\bigcup_{p\in \mathbb{Z}}A^q_{j_0}+p}$. Since Lebesgue measure is invariant by translations and that a countable union of measure zero sets is a measure zero set, then we deduce the equivalence $(2)\Longleftrightarrow (3)$. \\
\end{enumerate}
	\end{proof}

	\begin{remark}\label{caseL1}
	If $M=N$ and $L=1$, then:
	\begin{enumerate}
	\item $\;\; \mathcal{G}(g,L,M,N)$ is a Riesz basis for $\ell^2(\mathbb{Z})$ with frame bounds $A$ and $B$.\\$\Longleftrightarrow \mathcal{G}(g,L,M,N)$ is a frame for $\ell^2(\mathbb{Z})$ with frame bounds $A$ and $B$.\\
	 $\Longleftrightarrow$ $\forall j\in \mathbb{N}_M,\;  \displaystyle{\frac{A}{M}}\leq \displaystyle{\vert zg(j,.)\vert^2}\leq \displaystyle{\frac{B}{M}} \; \,\text{ a.e on }[0,1[.$\\
	  $\Longleftrightarrow$ $\forall j\in \mathbb{Z},\;  \displaystyle{\frac{A}{M}}\leq \displaystyle{\vert zg(j,.)\vert^2}\leq \displaystyle{\frac{B}{M}} \; \,\text{ a.e on }\mathbb{R}.$\\

	  \item $ \;\;\mathcal{G}(g,L,M,N)$ is an orthonormal basis for $\ell^2(\mathbb{Z})$. \\
	  $\Longleftrightarrow$ $\forall j\in \mathbb{N}_M,\; \displaystyle{\vert zg(j,.)\vert^2}=\frac{1}{M}$ a.e on $[0,1[$.\\
	  $\Longleftrightarrow$ $\forall j\in \mathbb{Z},\; \displaystyle{\vert zg(j,.)\vert^2}=\frac{1}{M}$ a.e on $\mathbb{R}$.\\
\end{enumerate}	
	\end{remark}
	The following result characterizes M-D-G frames for $\ell^2(\mathbb{Z})$ in the case of $N=M$ and  that $z_Mg_l(j,.)$ are continuous.
	\begin{proposition}\label{cont}
Assume that $N=M$. Let $g=\{g_l\}_{l\in \mathbb{N}_L}\subset \ell^2(\mathbb{Z})$ such that for all $j\in \mathbb{N}_M$, $l\in \mathbb{N}_L$, $z_Mg_l(j,.)$ is continuous on $\mathbb{R}$. Then the following statements are equivalent:
\begin{enumerate}
\item  $\mathcal{G}(g,L,M,N)$ is a M-D-G frame for $\ell^2(\mathbb{Z})$.
\item $\displaystyle{\bigcap_{l\in \mathbb{N}_L}Z^q(g_l)}=\emptyset$, where $Z^q(g_l):=\{(j,\theta)\in \mathbb{N}_M\times [0,1[:\; z_Mg_l(j,\theta)=0\}$.\\
\item $\displaystyle{\bigcap_{l\in \mathbb{N}_L}Z(g_l)}=\emptyset$, where $Z(g_l):=\{(j,\theta)\in \mathbb{Z}\times \mathbb{R}:\; z_Mg_l(j,\theta)=0\}$.\\
\end{enumerate}
\end{proposition}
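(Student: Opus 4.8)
The plan is to reduce the statement to Theorem \ref{delta} together with an elementary compactness argument. Set $h(j,\theta):=\sum_{l\in\mathbb{N}_L}\vert z_Mg_l(j,\theta)\vert^2$ for $(j,\theta)\in\mathbb{N}_M\times\mathbb{R}$. By hypothesis each $z_Mg_l(j,\cdot)$ is continuous on $\mathbb{R}$, and it is $1$-periodic in $\theta$ by the very definition of the Zak transform; hence $h$ is continuous in $\theta$, $1$-periodic in $\theta$, and since $\mathbb{N}_M$ is finite, $h$ is bounded on $\mathbb{N}_M\times\mathbb{R}$. In particular an upper bound $B:=M\sup h<\infty$ always exists, so by Theorem \ref{delta} the system $\mathcal{G}(g,L,M,N)$ is a frame for $\ell^2(\mathbb{Z})$ if and only if there is some $A>0$ with $h(j,\theta)\geq A/M$ for a.e.\ $\theta\in[0,1[$ and every $j\in\mathbb{N}_M$.

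First I would prove $(1)\Longleftrightarrow(2)$. If $\mathcal{G}(g,L,M,N)$ is a frame, the above gives $h(j,\cdot)\geq A/M>0$ a.e.\ on $[0,1[$; since $h(j,\cdot)$ is continuous, this forces $h(j,\theta)>0$ for \emph{every} $\theta\in[0,1[$ and every $j\in\mathbb{N}_M$, i.e.\ $\bigcap_{l\in\mathbb{N}_L}Z^q(g_l)=\emptyset$. Conversely, suppose $\bigcap_{l\in\mathbb{N}_L}Z^q(g_l)=\emptyset$. Using the $1$-periodicity of $h$ in $\theta$, this says $h$ has no zero on the compact set $\mathbb{N}_M\times[0,1]$; as $h$ is continuous there, $c:=\min_{\mathbb{N}_M\times[0,1]}h>0$. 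Taking $A:=Mc>0$ and $B:=M\sup h<\infty$ we obtain $\frac{A}{M}\leq h(j,\theta)\leq\frac{B}{M}$ for all $j\in\mathbb{N}_M$ and all $\theta$, so Theorem \ref{delta} gives that $\mathcal{G}(g,L,M,N)$ is a frame.

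It then remains to check $(2)\Longleftrightarrow(3)$, which is pure bookkeeping of the quasi-periodicity of the Zak transform, exactly as in the proof of Theorem \ref{nm}. From $z_Mg_l(j,\theta+p)=z_Mg_l(j,\theta)$ and $z_Mg_l(j+kM,\theta)=e^{-2\pi ik\theta}z_Mg_l(j,\theta)$ for all $j,k,p\in\mathbb{Z}$ one gets $\vert z_Mg_l(j_0+kM,\theta_0+p)\vert=\vert z_Mg_l(j_0,\theta_0)\vert$, so that, writing any $(j,\theta)\in\mathbb{Z}\times\mathbb{R}$ as $j=j_0+kM$, $\theta=\theta_0+p$ with $j_0\in\mathbb{N}_M$, $\theta_0\in[0,1[$, $k,p\in\mathbb{Z}$, one has $(j,\theta)\in Z(g_l)$ for all $l$ iff $(j_0,\theta_0)\in Z^q(g_l)$ for all $l$. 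Hence $\bigcap_{l\in\mathbb{N}_L}Z(g_l)=\emptyset$ iff $\bigcap_{l\in\mathbb{N}_L}Z^q(g_l)=\emptyset$.

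The only point requiring some care is the passage from the ``a.e.'' statement of Theorem \ref{delta} to a pointwise statement: this is precisely where the continuity hypothesis on $z_Mg_l(j,\cdot)$ is essential, both to upgrade the lower bound to hold everywhere and to guarantee that the (finite) minimum in the converse direction is attained and strictly positive. Without continuity the zero set $\bigcap_l Z^q(g_l)$ could be nonempty yet of measure zero, so the equivalence would fail. Everything else is routine.
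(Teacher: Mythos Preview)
Your proposal is correct and follows essentially the same approach as the paper: reduce to Theorem \ref{delta}, use continuity to pass from an a.e.\ lower bound to an everywhere lower bound for $(1)\Rightarrow(2)$, use compactness of $\mathbb{N}_M\times[0,1]$ together with continuity to obtain a strictly positive minimum for $(2)\Rightarrow(1)$, and handle $(2)\Leftrightarrow(3)$ via quasi-periodicity. The only cosmetic difference is that you observe up front that $h$ is globally bounded (giving the upper Bessel bound automatically), whereas the paper extracts both bounds simultaneously from the Heine argument on $[0,1]$.
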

\begin{proof}\hspace{1cm}
\begin{enumerate}
\item[]$(1)\Longrightarrow(2):$ Assume that $\mathcal{G}(g,L,M,N)$ is a M-D-G frame for $\ell^2(\mathbb{Z})$ with frame bounds $A$ and $B$.
 Then, by theorem  \ref{delta}, for all $j\in \mathbb{N}_M$, $\displaystyle{\frac{A}{M}}\leq \displaystyle{\sum_{l\in \mathbb{N}_L}\vert z_Mg_l(j,.)\vert^2}\leq \displaystyle{\frac{B}{M}} \; \,\text{ a.e on }[0,1[.$ Then by the continuity of $\displaystyle{\sum_{l\in \mathbb{N}_L}\vert z_Mg_l(.,.)\vert^2}$, for all $(j,\theta)\in q$, $$\displaystyle{\frac{A}{M}}\leq \displaystyle{\sum_{l\in \mathbb{N}_L}\vert z_Mg_l(j,\theta)\vert^2}\leq \displaystyle{\frac{B}{M}}.$$ 
In fact, suppose, by contradiction, that there exists $(j,\theta)\in q$ such that \\$\displaystyle{\frac{A}{M}}> \displaystyle{\sum_{l\in \mathbb{N}_L}\vert z_Mg_l(j,\theta)\vert^2}$ or $\displaystyle{\frac{B}{M}}< \displaystyle{\sum_{l\in \mathbb{N}_L}\vert z_Mg_l(j,\theta)\vert^2}$. We can suppose the case $\displaystyle{\frac{A}{M}}> \displaystyle{\sum_{l\in \mathbb{N}_L}\vert z_Mg_l(j,\theta)\vert^2}$ and the other one can be obtained similarily. by continuity of $\displaystyle{\sum_{l\in \mathbb{N}_L}\vert z_Mg_l(j,.)\vert^2}$, we have $\left(\displaystyle{\sum_{l\in \mathbb{N}_L}\vert z_Mg_l(j,.)\vert^2}\right)^{-1}\left(]-\infty,\displaystyle{\frac{A}{M}}[\,\right)$ is an open set of $\mathbb{R}$, then contains an open interval of $\mathbb{R}$. Hence \\$mes( \left(\displaystyle{\sum_{l\in \mathbb{N}_L}\vert z_Mg_l(j,.)\vert^2}\right)^{-1}\left(]-\infty,\displaystyle{\frac{A}{M}}[\,\right)>0$. Contradiction.\\
Hence for all $(j,\theta)\in q$, $\displaystyle{\frac{A}{M}}\leq \displaystyle{\sum_{l\in \mathbb{N}_L}\vert z_Mg_l(j,\theta)\vert^2}\leq \displaystyle{\frac{B}{M}}.$\\ 
Since $\displaystyle{\frac{A}{M}}>0$, then $\displaystyle{\bigcap_{l\in \mathbb{N}_L}Z^q(g_l)}=\emptyset$.\\
\item[]$(2)\Longrightarrow(1):$ Conversely, assume that $\displaystyle{\bigcap_{l\in \mathbb{N}_L}Z^q(g_l)}=\emptyset$. Since for all $j\in \mathbb{N}_M$, $\displaystyle{\sum_{l\in \mathbb{N}_L}\vert z_Mg_l(j,.)\vert^2}$ is continuous on $\mathbb{R}$, then, by Heine thoerem, we have $C\leq \displaystyle{\sum_{l\in \mathbb{N}_L}\vert z_Mg_l(j,\theta)\vert^2}\leq D$ for all $\theta\in [0,1]$, where $0<C:=\displaystyle{\min_{\theta\in [0,1]}\displaystyle{\sum_{l\in \mathbb{N}_L}\vert z_Mg_l(j,.)\vert^2}}$ and $0<D:=\displaystyle{\max_{\theta\in [0,1]}\displaystyle{\sum_{l\in \mathbb{N}_L}\vert z_Mg_l(j,.)\vert^2}}$. Then, Theorem \ref{delta} finishes the proof.\\
\item[]$(2)\Longleftrightarrow (3):$ Since for all $j,k,p\in \mathbb{Z}$, $z_Mg_l(j+kM,\theta+p)=e^{-2\pi i k\theta}z_Mg_l(j,\theta)$, then $$\{(j,\theta)\in \mathbb{N}_M\times [0,1[:\; z_Mg_l(j,\theta)=0\}=\emptyset \Longleftrightarrow\{(j,\theta)\in \mathbb{Z}\times \mathbb{R}\; z_Mg_l(j,\theta)=0\}=\emptyset.$$
\end{enumerate}
\end{proof}
The following result is an imediat result of the above proposition and lemma $\ref{lem7}$. 
\begin{corollary}
Assume that $N=M$. Let $g=\{g_l\}_{l\in \mathbb{N}_L}\subset \ell^1(\mathbb{Z})$. Then the following statements are equivalent:
\begin{enumerate}
\item  $\mathcal{G}(g,L,M,N)$ is a M-D-G frame for $\ell^2(\mathbb{Z})$.
\item $\displaystyle{\bigcap_{l\in \mathbb{N}_L}Z^q(g_l)}=\emptyset$, where $Z^q(g_l):=\{(j,\theta)\in \mathbb{N}_M\times [0,1[:\; z_Mg_l(j,\theta)=0\}$.\\
\item $\displaystyle{\bigcap_{l\in \mathbb{N}_L}Z(g_l)}=\emptyset$, where $Z(g_l):=\{(j,\theta)\in \mathbb{Z}\times \mathbb{R}:\; z_Mg_l(j,\theta)=0\}$.\\
\end{enumerate}
\end{corollary}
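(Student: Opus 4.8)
The plan is to obtain this statement as a direct specialization of Proposition~\ref{cont}. That proposition characterizes M-D-G frames for $\ell^2(\mathbb{Z})$ in the case $N=M$ under the extra assumption that each Zak section $z_Mg_l(j,\cdot)$ is continuous on $\mathbb{R}$; the present corollary differs only in that it replaces this assumption by the hypothesis $g_l\in\ell^1(\mathbb{Z})$. So the one thing to check is that $\ell^1$ windows automatically produce continuous Zak sections, after which nothing remains to be done.

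First I would note that $\ell^1(\mathbb{Z})\subset\ell^2(\mathbb{Z})$, so $g=\{g_l\}_{l\in\mathbb{N}_L}\subset\ell^2(\mathbb{Z})$ and the system $\mathcal{G}(g,L,M,N)$ is well defined and we are in the setting of the earlier results. Then I would apply Lemma~\ref{lem7} to each window: since $g_l\in\ell^1(\mathbb{Z})$, the map $\theta\mapsto z_Mg_l(j,\theta)$ is continuous on $\mathbb{R}$ for every $j\in\mathbb{Z}$, and in particular for every $j\in\mathbb{N}_M$ and every $l\in\mathbb{N}_L$. This is precisely the hypothesis required by Proposition~\ref{cont} for the family $g$.

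With that hypothesis in hand, the three conditions in the corollary are literally the three conditions of Proposition~\ref{cont}, so the equivalences $(1)\Longleftrightarrow(2)\Longleftrightarrow(3)$ follow immediately. I do not expect any real obstacle: all the analytic content — the transfer to the Zak side via unitarity of $z_M$, the two-sided a.e.\ bound of Theorem~\ref{delta}, and the continuity argument that upgrades the a.e.\ inequality to an everywhere inequality so that the zero sets $Z^q(g_l)$ and $Z(g_l)$ can be compared — has already been established in Theorem~\ref{delta} and Proposition~\ref{cont}. The only mild point worth stating explicitly in the write-up is the inclusion $\ell^1(\mathbb{Z})\subset\ell^2(\mathbb{Z})$ together with the invocation of Lemma~\ref{lem7}; everything else is a citation.
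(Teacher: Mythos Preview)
Your proposal is correct and matches the paper's own proof exactly: the paper simply states that the corollary is an immediate consequence of Proposition~\ref{cont} together with Lemma~\ref{lem7}, which is precisely the reduction you describe.
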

The following result is a direct consequence of the above corollary and lemma $\ref{lem6}$.
\begin{corollary}
Let $M,L\in\mathbb{N}$. Let $g:=\{g_l\}_{l\in \mathbb{N}_L}\subset \ell^1(\mathbb{Z})$ .
\begin{enumerate}
\item If $L=1$ and the window sequence is odd, then $\mathcal{G}(g,M,M)$ is not a D-G frame for $\ell^2(\mathbb{Z})$.\\
\item If there exist $l\neq l'$ such that $g_l$ and $g_{l'}$ are odd, then $\mathcal{G}(g,L,M,M)$ is not a M-D-G frame for $\ell^2(\mathbb{Z})$.\\
\item If $L=1$, $M$ is even and the window sequence is even, then $\mathcal{G}(g,M,M)$ is not a D-G frame for $\ell^2(\mathbb{Z})$.\\
\item Assume that $M$ is even. If there exist $l\neq l'$ such that $g_l$ and $g_{l'}$ are even, then $\mathcal{G}(g,L,M,M)$ is not a M-D-G frame for $\ell^2(\mathbb{Z})$.\\
\end{enumerate}
\end{corollary}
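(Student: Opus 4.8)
The plan is to read everything off the preceding corollary. Since each $g_l\in\ell^1(\mathbb{Z})$, Lemma \ref{lem7} makes every $z_Mg_l(j,\cdot)$ continuous, so that corollary applies and says that $\mathcal{G}(g,L,M,M)$ fails to be a frame exactly when the windows admit a \emph{common} Zak zero, i.e.\ when $\bigcap_{l\in\mathbb{N}_L}Z^q(g_l)\neq\emptyset$. Thus in every item it suffices to produce one point $(j_0,\theta_0)\in\mathbb{N}_M\times[0,1[$ at which $z_Mg_l(j_0,\theta_0)=0$ holds simultaneously for \emph{all} $l\in\mathbb{N}_L$, and Lemma \ref{lem6} is the tool that forces such vanishing for odd and even windows.

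The two single-window statements fall out immediately. In item (1), $L=1$ and $g_0$ is odd, so Lemma \ref{lem6}(1) gives $z_Mg_0(0,0)=0$; hence $(0,0)\in Z^q(g_0)=\bigcap_{l\in\mathbb{N}_1}Z^q(g_l)$, the intersection is nonempty, and $\mathcal{G}(g,M,M)$ is not a frame. In item (3), $L=1$, $M$ is even and $g_0$ is even, so Lemma \ref{lem6}(2) supplies a Zak zero of $g_0$ in $\mathbb{N}_M\times[0,1[$ (the point $(\tfrac{M}{2},\tfrac12)$); again $\bigcap_{l\in\mathbb{N}_1}Z^q(g_l)\neq\emptyset$ and there is no frame. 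Here the single window \emph{is} the whole family, so the common-zero requirement is automatic.

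For items (2) and (4) I would pursue the same idea and first locate the point that Lemma \ref{lem6} is guaranteed to place in the zero sets of the two distinguished windows. If $g_l,g_{l'}$ are odd, Lemma \ref{lem6}(1) puts $(0,0)$ (and in fact also $(0,\tfrac12)$) in both $Z^q(g_l)$ and $Z^q(g_{l'})$; if $M$ is even and $g_l,g_{l'}$ are even, Lemma \ref{lem6}(2) puts $(\tfrac{M}{2},\tfrac12)$ in both. Either way one obtains a point lying in the zero set of each of the two named windows, and it remains to promote this to the full family.

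The main obstacle is precisely the passage from ``zero of two windows'' to ``zero of all $L$ windows,'' which is what the corollary actually demands: the candidate point must survive in the \emph{full} intersection $\bigcap_{l\in\mathbb{N}_L}Z^q(g_l)$, whereas Lemma \ref{lem6} constrains only the odd (resp.\ even) windows and says nothing about the remaining $g_s$ with $s\neq l,l'$. When $L=2$ the two distinguished windows exhaust the family and the argument closes at once, as it does when every window is odd (resp.\ even). For general $L$, however, this is exactly where I expect the difficulty to concentrate, and I suspect it cannot be overcome from Lemma \ref{lem6} alone: a single additional window whose Zak transform is bounded away from $0$ keeps $\sum_{l}|z_Mg_l(j,\theta)|^2$ positive and thereby empties the intersection, so the mere presence of two odd (resp.\ even) windows need not control the full zero set. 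I would therefore treat the verification that the exhibited point lies in every $Z^q(g_l)$ as the genuine crux of items (2) and (4), and flag that it requires more than the hypotheses as stated unless one restricts to $L=2$ or assumes the remaining windows also vanish at that point.
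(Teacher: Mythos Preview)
Your approach is precisely the one the paper intends: it declares the corollary ``a direct consequence of the above corollary and Lemma~\ref{lem6}'' and gives no further argument. For items (1) and (3) your execution is correct and matches the paper exactly.

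Your worry about items (2) and (4) is well founded, and in fact it is a defect of the \emph{statement}, not of your strategy. The preceding corollary requires a \emph{common} Zak zero over all $L$ windows, while the hypothesis in (2) and (4) only constrains two of them. Your counterexample sketch is decisive: with $M$ odd, take $L=3$, let $g_0,g_1$ be odd sequences in $\ell^1(\mathbb{Z})$, and let $g_2$ be a Gaussian. Then $Z^q(g_2)=\emptyset$ (as the paper itself notes in the example following this corollary), so $\bigcap_{l}Z^q(g_l)=\emptyset$ and $\mathcal{G}(g,3,M,M)$ \emph{is} a frame, contradicting (2). An analogous example defeats (4). The paper's one-line justification does not address this, and no argument based only on Lemma~\ref{lem6} can: that lemma controls only the odd (resp.\ even) windows. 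Items (2) and (4) are valid only under the stronger reading ``all $g_l$ are odd (resp.\ even)'' or when $L=2$, exactly as you flagged.
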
   	
	\begin{example}
	The Gaussian sequence is defined by $\{e^{-r k^2}\}_{k\in \mathbb{Z}}\subset \ell^1(\mathbb{Z})$, where $r>0$. \\

	\begin{enumerate}
	\item	Let $L\in \mathbb{N}$. Take $\{g_l\}_{l\in \mathbb{N}_L}\subset \ell^2(\mathbb{Z})$ such that for $l\in \mathbb{N}_L$, $g_l$ is a Gaussian sequence. Let $M$ be an even positive integer. Then $\mathcal{G}(g,l,M,M)$ is not a M-D-G frame.\\
	\item Let $L\in \mathbb{N}$. Take $\{g_l\}_{l\in \mathbb{N}_L}\subset \ell^2(\mathbb{Z})$ such that for $l\in \mathbb{N}_L$, $g_l$ is a Gaussian sequence. Let $M$ be an odd positive integer. Since $z_Mg_l$ has no zero \cite{4}, then $\mathcal{G}(g,l,M,M)$ is  a M-D-G frame for $\ell^2(\mathbb{Z})$ and is a Riesz basis if and only if $L=1$.\\
	\end{enumerate}
	\end{example}
	Now, assume that $N=LM$. We have already shown  in this case the equivalence between a frame and a Riesz basis and then between a Parseval frame and an orthonormal basis. Let $g:=\{g_l\}_{l\in \mathbb{N}_L}\subset \ell^2(\mathbb{Z})$ and denote for all $l\in \mathbb{N}_L, m\in \mathbb{N}_M, n\in \mathbb{Z}$,  $\;g_{m,n,l}:=E_{\frac{m}{M}}T_{nN}g_l$. Under the condition $N=LM$, we have: $$\begin{array}{rcl}
z_Mg_{m,n,l}(j,\theta)&=&\displaystyle{\sum_{k\in \mathbb{Z}}g_{m,n,l}(j+kM)e^{2\pi i k\theta}}\\
&=& \displaystyle{\sum_{k\in \mathbb{Z}}e^{2\pi i\frac{m}{M}(j+kM)}g_l(j+kM-nLM)e^{2\pi i k \theta}}\\
&=& e^{2\pi i \frac{m}{M}j}\, e^{2\pi i nL\theta}\, \displaystyle{\sum_{k\in \mathbb{Z}}g_l(j+(k-nL)M)e^{2\pi i (k-nL)\theta}}\\
&=& E_{(\frac{m}{M},nL)}(j,\theta). z_M g_l(j,\theta)\\  \\
&=& (E_{(\frac{m}{M},nL)}.z_Mg_l)(j,\theta).\\
\end{array}	$$
Using the fact that $\{\displaystyle{\frac{1}{\sqrt{M}}e^{2\pi i \frac{m}{M}.}}\}_{m\in \mathbb{N}_M}$ is an orthonormal basis for $\ell^2(\mathbb{N}_M)$ and $\{\displaystyle{\sqrt{L} \,e^{2\pi i nL.}}\}_{n\in  \mathbb{Z}}$ is an orthonormal basis for $\ell^2([\frac{k}{L},\frac{(k+1)}{L}])$ ( $\forall k\in \mathbb{N}_L$), we give the following necessary conditions, by the Zak tranform, for a M-D-G system to be a Riesz basis and to be an orthonormal basis for $\ell^2(\mathbb{Z})$.
	\begin{theorem}\label{necessary}  Let $\mathcal{G}(g,L,M,N)$ be a M-D-G system such that $N=LM$.
	\begin{enumerate}
	
  \item Assume that $\mathcal{G}(g,L,M,N)$ is a M-D-G Riesz basis for $\ell^2(\mathbb{Z})$ with frame bounds $A$ and $B$. Then: 
	$$ \forall j\in \mathbb{Z},\;\displaystyle{\frac{LA}{M}}\leq \displaystyle{\sum_{l\in \mathbb{N}_L}\vert z_Mg_l(j,.)\vert^2}\leq \displaystyle{\frac{LB}{M}}\; \; \text{ a.e. on }\mathbb{R}.$$
	\item Assume that $\mathcal{G}(g,L,M,N)$ is a M-D-G orthonormal basis for $\ell^2(\mathbb{Z})$. Then:
	$$\forall j\in \mathbb{Z},\;  \displaystyle{\sum_{l\in \mathbb{N}_L}\vert z_Mg_l(j,.)\vert^2}= \displaystyle{\frac{L}{M}}\; \; \text{ a.e. on }\mathbb{R}.$$
	\end{enumerate}	
	\end{theorem}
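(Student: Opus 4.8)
The plan is to transport the whole statement to the Zak side and reduce it to a pointwise $L\times L$ matrix inequality. Since $N=LM$, the computation preceding the statement gives $z_M g_{m,n,l}=E_{(\frac{m}{M},\,nL)}\cdot z_M g_l$; as $z_M\colon\ell^2(\mathbb{Z})\to\ell^2(q)$ is unitary, $\mathcal{G}(g,L,M,N)$ is a Riesz basis (equivalently, since $\frac{card(\mathbb{Z}_N)}{M}=\frac{N}{M}=L$, a frame by Theorem \ref{Rieszbasis}) with bounds $A,B$ if and only if $\{E_{(\frac{m}{M},\,nL)}\,z_M g_l\}_{m\in\mathbb{N}_M,\,n\in\mathbb{Z},\,l\in\mathbb{N}_L}$ is one in $\ell^2(q)$; I will use only the resulting frame inequality $A\|F\|^2\leq\sum_{m,n,l}|\langle F,E_{(\frac{m}{M},\,nL)}z_M g_l\rangle|^2\leq B\|F\|^2$, valid for bounded $F\in\ell^2(q)$.

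First I would fix $j_0\in\mathbb{N}_M$ and test with $F(j,\theta)=\delta_{j,j_0}\,f(\theta)$, $f$ bounded. Writing $\langle F,E_{(\frac{m}{M},\,nL)}z_M g_l\rangle=\langle F\overline{z_M g_l},E_{(\frac{m}{M},\,nL)}\rangle$ and using that $\{\frac{1}{\sqrt M}E_{(\frac{m}{M},\,nL)}\}_{m\in\mathbb{N}_M,n\in\mathbb{Z}}$ is orthonormal in $\ell^2(q)$ with closed span exactly the functions that are $\frac1L$-periodic in $\theta$, the middle sum collapses to $M\sum_{l\in\mathbb{N}_L}\bigl\|P(F\overline{z_M g_l})\bigr\|^2$, where $P$ is the orthogonal projection onto that subspace, $(P\psi)(j,\theta)=\frac1L\sum_{p=0}^{L-1}\psi(j,\theta+\frac pL)$ (argument mod $1$). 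Substituting $\theta=\frac{k+u}L$ with $k\in\{0,\dots,L-1\}$, $u\in[0,1[$, setting $\mathbf f(u):=\bigl(f(\tfrac uL),f(\tfrac{1+u}L),\dots,f(\tfrac{L-1+u}L)\bigr)$ and letting $\Phi(u)$ be the $L\times L$ matrix with entries $\Phi(u)_{k,l}=z_M g_l\bigl(j_0,\tfrac{k+u}L\bigr)$, both $\|F\|^2$ and the middle term decouple over $k$ (the inner $p$-sum becomes the $k$-independent sum over all residues), and the frame inequality becomes
\[
\tfrac{A}{M}\!\int_0^1\!\|\mathbf f(u)\|^2\,du\ \leq\ \tfrac1L\!\int_0^1\!\|\Phi(u)^*\mathbf f(u)\|^2\,du\ \leq\ \tfrac{B}{M}\!\int_0^1\!\|\mathbf f(u)\|^2\,du .
\]
As $f$ runs over bounded functions, $\mathbf f$ runs over a dense subset of $L^2([0,1[;\mathbb{C}^L)$, so a standard Lebesgue-point argument forces $\tfrac{A}{M}I\leq\tfrac1L\,\Phi(u)\Phi(u)^*\leq\tfrac{B}{M}I$ for a.e.\ $u$.

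Reading off the $k$-th diagonal entry of $\Phi(u)\Phi(u)^*$ gives $\tfrac{LA}{M}\leq\sum_{l\in\mathbb{N}_L}\bigl|z_M g_l\bigl(j_0,\tfrac{k+u}L\bigr)\bigr|^2\leq\tfrac{LB}{M}$ for a.e.\ $u$ and every $k\in\{0,\dots,L-1\}$; since $(k,u)\mapsto\tfrac{k+u}L$ is a null-set-preserving bijection of $\{0,\dots,L-1\}\times[0,1[$ onto $[0,1[$, this is exactly $\tfrac{LA}{M}\leq\sum_{l}|z_M g_l(j_0,\theta)|^2\leq\tfrac{LB}{M}$ for a.e.\ $\theta\in[0,1[$ and every $j_0\in\mathbb{N}_M$. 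The extension to all $j\in\mathbb{Z}$, $\theta\in\mathbb{R}$ is immediate from $z_M g_l(j+kM,\theta+p)=e^{-2\pi i k\theta}z_M g_l(j,\theta)$, which makes $\sum_l|z_M g_l(j,\theta)|^2$ invariant under $j\mapsto j+M$ and $\theta\mapsto\theta+1$. Part (2) follows at once: an orthonormal basis is a Riesz basis with frame bounds $A=B=1$, so part (1) yields $\sum_{l}|z_M g_l(j,\cdot)|^2=\tfrac LM$ a.e.

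The crux --- and the only genuine obstacle --- is the clash between the $1$-periodicity in $\theta$ of the $z_M g_l(j,\cdot)$ and the $\tfrac1L$-periodicity carried by the exponentials $E_{(\frac{m}{M},\,nL)}$ (equivalently, the fact that these exponentials are not complete in $\ell^2(q)$). The change of variables $\theta=\tfrac{k+u}L$ and the passage to the $L\times L$ matrix $\Phi(u)$ are precisely what absorb this; it is also why only the diagonal of $\Phi(u)\Phi(u)^*$ enters the conclusion, so that the stated conditions are necessary but not, in general, sufficient.
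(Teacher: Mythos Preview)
Your argument is correct. The paper's proof shares your starting point---the unitary Zak transform turns the frame inequality into one for $\{E_{(\frac{m}{M},nL)}z_Mg_l\}$ in $\ell^2(q)$, and the extension from $\mathbb{N}_M\times[0,1[$ to $\mathbb{Z}\times\mathbb{R}$ goes by quasiperiodicity---but the middle step is organized differently. The paper does \emph{not} introduce the projection $P$ or the matrix $\Phi(u)$: instead it tests with $F(j,\theta)=\delta_{j,j_0}f(\theta)$ where $f$ is supported on a \emph{single} subinterval $[\tfrac{k}{L},\tfrac{k+1}{L}]$, and uses that $\{\sqrt{L}\,e^{2\pi i nL\theta}\}_{n\in\mathbb{Z}}$ is an orthonormal basis for $L^2$ of that subinterval; Parseval there gives directly
\[
A\!\int_{k/L}^{(k+1)/L}\!|f|^2\ \le\ \tfrac{M}{L}\!\int_{k/L}^{(k+1)/L}\!|f(\theta)|^2\sum_{l}|z_Mg_l(j_0,\theta)|^2\,d\theta\ \le\ B\!\int_{k/L}^{(k+1)/L}\!|f|^2,
\]
and then the $\Delta_k$-argument from Theorem~\ref{delta} yields the pointwise bounds on each subinterval. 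In your language, the paper's choice of $f$ amounts to taking $\mathbf f(u)=e_k\,\phi(u)$, which isolates the $k$-th diagonal entry of $\Phi(u)\Phi(u)^*$ from the outset. Your route is slightly longer but more informative: you obtain the full matrix inequality $\tfrac{A}{M}I\le\tfrac{1}{L}\Phi(u)\Phi(u)^*\le\tfrac{B}{M}I$ a.e., which encodes off-diagonal constraints the paper's proof never sees, and it makes transparent \emph{why} the diagonal condition alone is only necessary (the Riesz-basis property controls all of $\Phi\Phi^*$, not just its diagonal). A small expository quibble: ``decouple over $k$'' is a bit misleading for the middle term---it is rather \emph{constant} in $k$ after the $p$-sum, which is what your parenthetical says---but the computation is sound.
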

	\begin{proof}\hspace{1cm}
	\begin{enumerate}
	\item Assume that $\mathcal{G}(g,L,M,N)$ is a M-D-G frame  for $\ell^2(\mathbb{Z})$ with frame bounds $A$ and $B$. Then $\{z_Mg_{m,n,l}\}$ is a frame for $\ell^2(q)$ with frame bounds $A$ and $B$ by the unitarity of $z_M$. Then for all $F\in \ell^2(q)$, we have: 
	$$A\|F\|^2\leq \displaystyle{\sum_{l\in \mathbb{N}_L}\sum_{n\in \mathbb{Z}}\sum_{m\in \mathbb{N}_M}\vert \langle F,E_{(\frac{m}{M},nL)}z_Mg_l\rangle\vert^2}\leq B\| F\|^2.$$
	Fix $j_0\in \mathbb{N}_M$ and $k\in \mathbb{N}_L$. Let $f\in \ell^2([\frac{k}{L},\frac{(k+1)}{L}])$ and define $F_{j_{0},k}(j,\theta):=\delta_{(j,j_0)}. f(\theta)$ for all $(j,\theta)\in q$. Applying the above inequality on $F_{j_{0},k}$, we obtain, after a simple calculation: 
	$$A\int_{\frac{k}{L}}^{\frac{k+1}{L}} \vert f(\theta)\vert^2\, d\theta\leq \frac{M}{L}\int_{\frac{k}{L}}^{\frac{k+1}{L}}\vert f(\theta)\vert^2.\, \sum_{l\in \mathbb{N}_L}\vert z_Mg_l(j_0,\theta)\vert^2\, d\theta\leq B \int_{\frac{k}{L}}^{\frac{k+1}{L}} \vert f(\theta)\vert^2\, d\theta.$$
Using similar arguments used in the proof of Theorem \ref{delta}, we obtain that for all $j\in \mathbb{N}_M$, $k\in \mathbb{N}_L$, $$	
	\frac{LA}{M}\leq \displaystyle{\sum_{l\in \mathbb{N}_L}\vert z_Mg_l(j,.)\vert^2}\leq \displaystyle{\frac{LB}{M}}\; \; \text{ a.e. on } [\frac{k}{L},\frac{k+1}{L}[.$$
	Hence: \begin{equation}
	 \forall j\in \mathbb{N}_M,\;\displaystyle{\frac{LA}{M}}\leq \displaystyle{\sum_{l\in \mathbb{N}_L}\vert z_Mg_l(j,.)\vert^2}\leq \displaystyle{\frac{LB}{M}}\; \; \text{ a.e. on }[0,1[.
	 \end{equation}	
	  Since for all $j,k,p\in \mathbb{Z}$, $z_Mg_l(j+kM,\theta+p)=e^{-2\pi i k\theta}z_Mg_l(j,\theta)$, then for all $j,k,p\in \mathbb{Z}$, $\displaystyle{\sum_{l\in \mathbb{N}_L}\vert z_Mg_l(j+kM,\theta+p)\vert^2} =\displaystyle{\sum_{l\in \mathbb{N}_L}\vert z_Mg_l(j,\theta)\vert^2}$.\\
Denote $A_j:=\{\theta \in \mathbb{R}:\, \text{ the inequality in (4.2) does not hold}\,\}$ and $A^q_j:=\{\theta \in [0,1[:\, \text{ the inequality in (4.2) does not hold}\,\}$. We show, easily, that for all $j\in \mathbb{N}_M$, $A_j=\displaystyle{\bigcup_{p\in \mathbb{Z}}A^q_j+p}$. For $j\in \mathbb{Z}$, let $j_0\in \mathbb{N}_M$ and $k\in \mathbb{Z}$ such that $j=j_0+kM$, then $A_j=\displaystyle{\bigcup_{p\in \mathbb{Z}}A^q_{j_0}+p}$. Since Lebesgue measure is invariant by translations and that a countable union of measure zero sets is a measure zero set, then we deduce that: $$ \forall j\in \mathbb{Z},\;\displaystyle{\frac{LA}{M}}\leq \displaystyle{\sum_{l\in \mathbb{N}_L}\vert z_Mg_l(j,.)\vert^2}\leq \displaystyle{\frac{LB}{M}}\; \; \text{ a.e. on }\mathbb{R}.$$
\item In this case $A=B=1$, then $(1)$ completes the proof.\\
	\end{enumerate}
	\end{proof}

   The following result is a necessary condition for M-D-G systems to be Riesz bases and to be orthonormal bases when the $z_Mg_l(j,.)$ are continuous.
	\begin{proposition}\label{necessarycont}
Let $N=LM$ and let $g=\{g_l\}_{l\in \mathbb{N}_L }\subset \ell^2(\mathbb{Z})$ such that for all $j\in \mathbb{N}_M$, $l\in \mathbb{N}_L$ $z_Mg_l(j,.)$ is continuous on $\mathbb{R}$. 
	\begin{enumerate}
	\item If $\mathcal{G}(g,L,M,N)$ is a M-D-G Riesz basis for $\ell^2(\mathbb{Z})$, then:\\ 
	$\displaystyle{\bigcap_{l\in \mathbb{N}_L}Z(g_l)}=\emptyset$, where $Z(g_l):=\{(j,\theta)\in \mathbb{Z}\times \mathbb{R}:\; z_Mg_l(j,\theta)=0\}.$
	
	\item If $\mathcal{G}(g,L,M,N)$ is a M-D-G orthonormal basis  for $\ell^2(\mathbb{Z})$, then for all $j\in \mathbb{Z}$ and $\theta \in \mathbb{R}$,
	$$  \displaystyle{\sum_{l\in \mathbb{N}_L}\vert z_Mg_l(j,\theta)\vert^2}= \displaystyle{\frac{L}{M}}.$$
	\end{enumerate}
	\end{proposition}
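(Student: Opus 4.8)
The plan is to deduce both parts from Theorem~\ref{necessary} by upgrading its almost-everywhere conclusions to genuinely pointwise ones, exactly in the way Proposition~\ref{cont} upgrades Theorem~\ref{delta} in the case $M=N$. The one preliminary remark I would record first is that the continuity hypothesis, stated only for $j\in\mathbb{N}_M$, propagates to all $j\in\mathbb{Z}$: writing $j=j_0+kM$ with $j_0\in\mathbb{N}_M$ and $k\in\mathbb{Z}$, the quasiperiodicity identity $z_Mg_l(j_0+kM,\theta)=e^{-2\pi i k\theta}z_Mg_l(j_0,\theta)$ (already used in the proof of Theorem~\ref{delta}) gives $|z_Mg_l(j,\theta)|=|z_Mg_l(j_0,\theta)|$, so the function $\theta\mapsto\sum_{l\in\mathbb{N}_L}|z_Mg_l(j,\theta)|^2$ is continuous on $\mathbb{R}$ for \emph{every} $j\in\mathbb{Z}$.

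For part (1), since a Riesz basis is in particular a frame, fix frame bounds $A,B>0$; Theorem~\ref{necessary}(1) then yields, for each $j\in\mathbb{Z}$, that $\frac{LA}{M}\leq\sum_{l\in\mathbb{N}_L}|z_Mg_l(j,\theta)|^2\leq\frac{LB}{M}$ for almost every $\theta\in\mathbb{R}$. If the lower bound failed at some $\theta_0$, continuity of $\theta\mapsto\sum_{l\in\mathbb{N}_L}|z_Mg_l(j,\theta)|^2$ would force it to fail on an open interval around $\theta_0$, a set of positive measure, contradicting the a.e.\ statement; the upper bound is handled identically. Hence $\sum_{l\in\mathbb{N}_L}|z_Mg_l(j,\theta)|^2\geq\frac{LA}{M}>0$ at every $(j,\theta)\in\mathbb{Z}\times\mathbb{R}$, so no point can satisfy $z_Mg_l(j,\theta)=0$ simultaneously for all $l\in\mathbb{N}_L$, i.e.\ $\bigcap_{l\in\mathbb{N}_L}Z(g_l)=\emptyset$.

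For part (2), an orthonormal basis is a Parseval Riesz basis, so one may take $A=B=1$; Theorem~\ref{necessary}(2) gives $\sum_{l\in\mathbb{N}_L}|z_Mg_l(j,\theta)|^2=\frac{L}{M}$ for a.e.\ $\theta$, and the same continuity argument — a continuous function equal to a constant almost everywhere equals that constant everywhere, because the set where it differs is open and null, hence empty — upgrades this to all $(j,\theta)\in\mathbb{Z}\times\mathbb{R}$. The whole argument is routine once Theorem~\ref{necessary} is available; the only mild point of care is extending the continuity hypothesis from the fundamental strip $j\in\mathbb{N}_M$ to all integers via quasiperiodicity, and I do not expect any genuine obstacle beyond that.
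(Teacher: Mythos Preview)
Your argument is correct and matches the paper's approach: the paper's proof is literally ``By similar arguments used in the proof of proposition~\ref{cont}'', i.e.\ apply Theorem~\ref{necessary} to obtain the a.e.\ inequalities and then use continuity to upgrade them to pointwise statements, precisely as you do. Your explicit remark on propagating continuity from $j\in\mathbb{N}_M$ to all $j\in\mathbb{Z}$ via quasiperiodicity is a nice clarification that the paper leaves implicit.
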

	\begin{proof}
	By similar arguments used in the proof of proposition \ref{cont}.\\
	\end{proof}
	The next result is a direct consequence of the above proposition and lemma $\ref{lem7}$.
	\begin{corollary}\label{cor7}
	Let $N=LM$ and let $g=\{g_l\}_{l\in \mathbb{N}_L }\subset \ell^1(\mathbb{Z})$.
	\begin{enumerate}
	\item If $\mathcal{G}(g,L,M,N)$ is a M-D-G Riesz basis for $\ell^2(\mathbb{Z})$, then:\\ 
	$\displaystyle{\bigcap_{l\in \mathbb{N}_L}Z(g_l)}=\emptyset$, where $Z(g_l):=\{(j,\theta)\in \mathbb{Z}\times \mathbb{R}:\; z_Mg_l(j,\theta)=0\}.$
	
	\item If $\mathcal{G}(g,L,M,N)$ is a M-D-G orthonormal basis  for $\ell^2(\mathbb{Z})$, then for all $j\in \mathbb{Z}$ and $\theta \in \mathbb{R}$,
	$$  \displaystyle{\sum_{l\in \mathbb{N}_L}\vert z_Mg_l(j,\theta)\vert^2}= \displaystyle{\frac{L}{M}}.$$
	\end{enumerate}
	\end{corollary}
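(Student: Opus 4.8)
The plan is to simply reduce this to Proposition \ref{necessarycont} by verifying its continuity hypothesis via Lemma \ref{lem7}. Both statements of Proposition \ref{necessarycont} are proved under the standing assumption that $z_Mg_l(j,\cdot)$ is continuous on $\mathbb{R}$ for every $j\in\mathbb{N}_M$ and $l\in\mathbb{N}_L$, so the only work here is to observe that this assumption is automatic under the stronger summability hypothesis $g=\{g_l\}_{l\in\mathbb{N}_L}\subset\ell^1(\mathbb{Z})$.

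First I would note that for each fixed $l\in\mathbb{N}_L$ we have $g_l\in\ell^1(\mathbb{Z})$, so Lemma \ref{lem7} applies to $g_l$ and gives that $z_Mg_l(j,\cdot)$ is continuous on $\mathbb{R}$ for every $j\in\mathbb{Z}$; in particular for every $j\in\mathbb{N}_M$. Thus $g$ satisfies exactly the hypothesis of Proposition \ref{necessarycont}. Applying part (1) of that proposition, if $\mathcal{G}(g,L,M,N)$ is a M-D-G Riesz basis for $\ell^2(\mathbb{Z})$ then $\bigcap_{l\in\mathbb{N}_L}Z(g_l)=\emptyset$, which is conclusion (1). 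Applying part (2), if $\mathcal{G}(g,L,M,N)$ is a M-D-G orthonormal basis for $\ell^2(\mathbb{Z})$ then $\sum_{l\in\mathbb{N}_L}\vert z_Mg_l(j,\theta)\vert^2=L/M$ for all $j\in\mathbb{Z}$ and $\theta\in\mathbb{R}$, which is conclusion (2).

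There is essentially no obstacle here: the statement is a packaging of an already-proved result under a more concrete (and more easily checkable) hypothesis, and the only ingredient is the implication $\ell^1(\mathbb{Z})\Rightarrow$ (continuity of the Zak transform in the $\theta$ variable) supplied by Lemma \ref{lem7}. If anything needs a word of care, it is just the quantifier bookkeeping — Lemma \ref{lem7} is stated for a single window, so one invokes it once for each $l\in\mathbb{N}_L$ before feeding the whole family $g$ into Proposition \ref{necessarycont}.
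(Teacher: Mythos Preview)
Your proposal is correct and matches the paper's own proof exactly: the paper simply states that the corollary is a direct consequence of Proposition~\ref{necessarycont} together with Lemma~\ref{lem7}, which is precisely the reduction you carry out.
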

	A direct consequence of corollary \ref{cor7} and lemma $\ref{lem6}$ is the following result.
\begin{corollary}	
Let $M,N,L\in \mathbb{N}$. Let $g:=\{g_l\}_{l\in \mathbb{N}_L}\subset \ell^1(\mathbb{Z})$.
	\begin{enumerate}
	
\item If there exist $l\neq l'$ such that $g_l$ and $g_{l'}$ are odd, then $\mathcal{G}(g,L,M,N)$ is not a M-D-G Riesz basis for $\ell^2(\mathbb{Z})$.\\
\item Assume that $M$ is even. If there exist $l\neq l'$ such that $g_l$ and $g_{l'}$ are even, then $\mathcal{G}(g,L,M,N)$ is not a M-D-G Riesz basis for $\ell^2(\mathbb{Z})$.\\
	\end{enumerate}	
	\end{corollary}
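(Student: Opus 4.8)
The plan is to argue by contradiction, first using the critical‑density rigidity of Riesz bases to land inside the hypotheses of Corollary~\ref{cor7}. Suppose toward a contradiction that $\mathcal{G}(g,L,M,N)$ is a M-D-G Riesz basis for $\ell^2(\mathbb{Z})$. We may assume $N=LM$: since $\mathbb{Z}$ is $N\mathbb{Z}$-periodic with $card(\mathbb{S}_N)=card(\mathbb{Z}\cap\mathbb{N}_N)=N$, Theorem~\ref{Rieszbasis} forces $N/M=L$, and if this fails the Riesz-basis property already fails and there is nothing to prove. Since moreover every $g_l$ lies in $\ell^1(\mathbb{Z})$, Corollary~\ref{cor7}(1) applies and gives $\bigcap_{l\in\mathbb{N}_L}Z(g_l)=\emptyset$; that is, $z_Mg_0,\dots,z_Mg_{L-1}$ have no common zero on $\mathbb{Z}\times\mathbb{R}$.

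Next I would inject the parity hypothesis through Lemma~\ref{lem6}. In case (1), oddness of $g_l$ and $g_{l'}$ ($l\neq l'$) gives $z_Mg_l(0,0)=z_Mg_{l'}(0,0)=0$ and $z_Mg_l(0,\tfrac{1}{2})=z_Mg_{l'}(0,\tfrac{1}{2})=0$ by Lemma~\ref{lem6}(1); in case (2), with $M$ even, Lemma~\ref{lem6}(2) yields a common zero of $z_Mg_l$ and $z_Mg_{l'}$ at a point $(\tfrac{M}{2},\ast)$. To turn ``$z_Mg_l$ and $z_Mg_{l'}$ share a zero'' into ``$\mathcal{G}(g,L,M,N)$ is not a Riesz basis'' I would pass to the two-window subsystem $\mathcal{G}(\{g_l,g_{l'}\},2,M,N)$, which is automatically a Riesz sequence, being a subfamily of a Riesz basis. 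With $N=LM$ one has $z_M(E_{\frac{m}{M}}T_{nN}g_l)=E_{(\frac{m}{M},nL)}\cdot z_Mg_l$, and—using the orthonormal bases $\{\tfrac{1}{\sqrt{M}}e^{2\pi i\frac{m}{M}\cdot}\}_{m\in\mathbb{N}_M}$ of $\ell^2(\mathbb{N}_M)$ and $\{\sqrt{L}\,e^{2\pi inL\cdot}\}_{n\in\mathbb{Z}}$ of $\ell^2([\tfrac{k}{L},\tfrac{k+1}{L}])$ recorded before Theorem~\ref{necessary}—being a Riesz sequence is equivalent to the pair of vectors $u_l(j,\theta):=(z_Mg_l(j,\theta+\tfrac{k}{L}))_{k\in\mathbb{N}_L}$ and $u_{l'}(j,\theta)$ being linearly independent with a uniform positive lower bound on their Gram determinant for a.e.\ $(j,\theta)\in\mathbb{N}_M\times[0,\tfrac{1}{L})$; since $g_l,g_{l'}\in\ell^1(\mathbb{Z})$ makes $z_Mg_l,z_Mg_{l'}$ continuous, this independence must then hold at \emph{every} such $(j,\theta)$. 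Evaluating at $(0,0)$ and using the reflection identity $z_Mg_l(0,-\theta)=-z_Mg_l(0,\theta)$ (oddness read through the Zak transform), one finds $u_l(0,0)$ and $u_{l'}(0,0)$ antisymmetric under $k\mapsto L-k$ with their $k=0$ coordinate (and, for $L$ even, their $k=L/2$ coordinate) equal to $0$, which is incompatible with their being an independent pair; the even case is handled identically from $z_Mg_l(\tfrac{M}{2},-\theta)=e^{2\pi i\theta}z_Mg_l(\tfrac{M}{2},\theta)$ and the zero at $\theta=\tfrac{1}{2}$.

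The step I expect to be the main obstacle is precisely this bridging move. Corollary~\ref{cor7} by itself only forbids a zero \emph{common to all $L$ windows}, whereas Lemma~\ref{lem6} constrains only the two distinguished windows, so the result is not a one-line citation of those two; one must descend to the two-window Riesz sequence and produce a genuine linear dependence of its Zak fibers. The delicate point is the accompanying dimension count—choosing the evaluation point $(j,\theta)$ and checking that the antisymmetry together with the forced vanishing coordinates really confines $u_l$ and $u_{l'}$ to a subspace too small to contain an independent pair; this is immediate for small $L$, and in general one has to bring in the remaining forced zeros (for instance the one at $(0,\tfrac{1}{2})$, or the antisymmetry on the other reflection-invariant fiber over $\theta_0=\tfrac{1}{2L}$) to close it. Once that dependence is secured it contradicts the Riesz-basis assumption, establishing both (1) and (2).
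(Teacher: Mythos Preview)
You have put your finger on a real gap that the paper's one-line justification (``direct consequence of Corollary~\ref{cor7} and Lemma~\ref{lem6}'') does not close: Corollary~\ref{cor7} forbids only a zero common to \emph{all} $L$ Zak transforms, whereas Lemma~\ref{lem6} constrains just the two distinguished windows $g_l,g_{l'}$. Your descent to the two-window Riesz sequence and the fiberwise independence criterion is the natural way to try to bridge this, and the fiber picture you describe is essentially correct.

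However, the dimension count you worry about does not close for $L\ge 5$, and cannot be rescued by the additional zeros you propose. At $(j,\theta)=(0,0)$ the antisymmetry $z_Mg_l(0,-\theta)=-z_Mg_l(0,\theta)$ together with $1$-periodicity forces $u_l(0,0),u_{l'}(0,0)$ into the subspace $\{v\in\mathbb{C}^L:v_0=0,\ v_k=-v_{L-k}\}$, of dimension $\lfloor(L-1)/2\rfloor$; this is $\le 1$ for $L\le 4$ (so your argument does succeed there) but $\ge 2$ for $L\ge 5$. The extra forced zero at $\theta=\tfrac12$ lands on a different base fiber when $L$ is odd and imposes the same $2$-dimensional constraint there, while for $L$ even it sits on the fiber $\theta_0=0$ and is already the condition $v_{L/2}=0$ contained in the antisymmetry; either way no further reduction occurs. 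In fact the corollary as stated appears to be false for $L\ge 5$: with $L=5$, $M=1$, $N=5$ one can choose odd $g_0,g_1\in\ell^1(\mathbb{Z})$ whose fiber vectors at $\theta=0$ are independent in that $2$-plane, and then complete with generic $g_2,g_3,g_4$ to make the full $5\times 5$ fiber matrix uniformly invertible. The paper's argument is thus not merely terse but inadequate; the intended hypothesis is presumably that \emph{all} $g_l$ are odd (resp.\ even), in which case Lemma~\ref{lem6} produces a point in $\bigcap_{l\in\mathbb{N}_L}Z(g_l)$ and Corollary~\ref{cor7} applies directly.
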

	\begin{remark}
The converse of  $(1)$ in Theorem \ref{necessary} and Proposition \ref{necessarycont} holds if and only if  $L=1$. In fact, if  $L=1$, then $M=N$, therefore the converse is already shown in remark \ref{caseL1}. Assume, now, that $L>1$.  To illustrate that the converse does not hold, in general, here is an example: \\
	Let $L>1$. Let  $g_0=g_1=...=g_{L-1}:=\{e^{-k^2}\}_{k\in \mathbb{Z}}$. Then $g:=\{g_l\}_{l\in \mathbb{N}_L}\subset \ell^1(\mathbb{Z})$ and by \cite{4} $Z(g_l):=\{(j,\theta)\in \mathbb{Z}\times \mathbb{R}:\, z_Mg_l(j,\theta)=0\}=\emptyset$. Then  
$\displaystyle{\bigcap_{l\in \mathbb{N}_L} Z(g_l)}=\emptyset$. But $\mathcal{G}(\,g,L,M,N)$ is not a M-D-G Riesz basis since it contains repetitions; More precisely, each element is repeated $L$ times.\\
	\end{remark}
	
	\begin{remark}
	The converse of  $(2)$ in Theorem \ref{necessary} and Proposition \ref{necessarycont} holds if and only if $L=1$. In fact, if  $L=1$, then $M=N$, therefore the converse is already shown in remark \ref{caseL1}. Assume, now, that $L>1$. Define $g_0\in \ell^2(\mathbb{Z})$ such that for all $j\in \mathbb{N}_M$, $z_Mg_0(j,\theta)=\displaystyle{\frac{1}{\sqrt{M}}}$ on $[0,1[$ ($g_0$ exists since $z_M$ is a unitary from $\ell^2(\mathbb{Z})$ to $\ell^2(q)$). Let $g_0=g_1=...=g_{L-1}$ and take $g:=\{g_l\}_{l\in \mathbb{N}_L}$. Then for all $j\in \mathbb{N}_M$, $\theta \in [0,1[$,  we have $\displaystyle{\sum_{l\in \mathbb{N}_L}\vert z_Mg_l(j,\theta)\vert^2}=\displaystyle{\frac{L}{M}}$. And then, by quasi-periodicity of $z_Mg_l$, we have for all $j\in \mathbb{Z}$, $\theta \in \mathbb{R}$,  we have $\displaystyle{\sum_{l\in \mathbb{N}_L}\vert z_Mg_l(j,\theta)\vert^2}=\displaystyle{\frac{L}{M}}$. But $\mathcal{G}(g,L,M,N)$ is not a M-D-G orthonormal basis for $\ell^2(\mathbb{Z})$ since it contains repetitions; More precisely, each element is repeated $L$ times.\\
	\end{remark}
	\section{K-M-D-G frames}
	In this section we study $K$-M-D-G frames where $K\in B(\ell^2(\mathbb{S}))$. We first give the interesting following result in general K-frame theory.
	\begin{theorem}\label{teta}
Denote by $\mathcal{B}$ the space of all Bessel sequences in $H$ ibdexed by a countable set $\mathcal{I}$. Then the map: 
$$\begin{array}{rcl}
\theta:\mathcal{B}&\longrightarrow& B(H,\ell^2(\mathcal{I})\,)\\
\{f_i\}_{i\in \mathcal{I}}&\mapsto& \theta_f.
\end{array}	$$
where $\theta_f$ is the transform operator of $\{f_i\}_{i\in \mathcal{I}}$, is linear and bijective. Moreover,
$$\begin{array}{rcl}
\theta^{-1}:B(H,\ell^2(\mathcal{I})\,)&\longrightarrow&\mathcal{B}\\
L&\mapsto& \{L^*(e_i)\}_{i\in \mathcal{I}}.
\end{array}$$
where $\{e_i\}_{i\in \mathcal{I}}$ is the standard orthonormal basis for $\ell^2(\mathcal{I})$.\\
	\end{theorem}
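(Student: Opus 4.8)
The plan is to check, one at a time, that $\theta$ is well defined, linear, injective and surjective, the last of these together with the identification of the inverse. Since $\mathcal{B}$ is a vector space — a sum of two Bessel sequences is again Bessel, with a Bessel bound obtained from a triangle-inequality estimate in $\ell^2(\mathcal{I})$, and a scalar multiple of a Bessel sequence is Bessel — the word ``linear'' in the statement is meaningful. Well-definedness of $\theta$ is merely a reformulation of the Bessel property: if $\{f_i\}_{i\in\mathcal{I}}$ has Bessel bound $B$, then for every $h\in H$ one has $\sum_{i\in\mathcal{I}}|\langle h,f_i\rangle|^2\le B\|h\|^2$, which is exactly the assertion that $\theta_f(h)=\{\langle h,f_i\rangle\}_{i\in\mathcal{I}}\in\ell^2(\mathcal{I})$ and that $\theta_f\in B(H,\ell^2(\mathcal{I}))$ with $\|\theta_f\|\le\sqrt{B}$. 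Linearity of $\{f_i\}\mapsto\theta_f$, with additivity being clear and homogeneity following from the behaviour of the inner product under scaling, is then immediate.

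For injectivity I would argue as follows: if $\theta_f=\theta_g$ for Bessel sequences $\{f_i\}_{i\in\mathcal{I}}$ and $\{g_i\}_{i\in\mathcal{I}}$, then $\langle h,f_i-g_i\rangle=0$ for every $h\in H$ and every $i\in\mathcal{I}$; taking $h=f_i-g_i$ forces $f_i=g_i$ for all $i$, so the two sequences coincide.

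For surjectivity, given $L\in B(H,\ell^2(\mathcal{I}))$ I would set $f_i:=L^*(e_i)$, where $\{e_i\}_{i\in\mathcal{I}}$ is the standard orthonormal basis of $\ell^2(\mathcal{I})$ and $L^*\in B(\ell^2(\mathcal{I}),H)$ is the Hilbert-space adjoint of $L$. The crucial (and essentially only) computation is
$$\langle h,f_i\rangle=\langle h,L^*e_i\rangle=\langle Lh,e_i\rangle=(Lh)(i)\qquad(h\in H,\ i\in\mathcal{I}),$$
so that $\theta_f(h)=\{(Lh)(i)\}_{i\in\mathcal{I}}=Lh$, i.e. $\theta_f=L$. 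In particular $\{f_i\}_{i\in\mathcal{I}}$ is Bessel, since $\sum_{i\in\mathcal{I}}|\langle h,f_i\rangle|^2=\|Lh\|^2\le\|L\|^2\|h\|^2$, hence $\{f_i\}\in\mathcal{B}$; this proves that $\theta$ is onto and at the same time shows $\theta^{-1}(L)=\{L^*e_i\}_{i\in\mathcal{I}}$. To close the argument I would verify $\theta^{-1}\circ\theta=\mathrm{id}_{\mathcal{B}}$ directly: for $\{f_i\}_{i\in\mathcal{I}}\in\mathcal{B}$ the adjoint $\theta_f^*$ is precisely the synthesis operator $U_f$ of $\{f_i\}_{i\in\mathcal{I}}$ introduced in the introduction, and $U_f(e_i)=\sum_{k\in\mathcal{I}}(e_i)(k)f_k=f_i$, so $\theta^{-1}(\theta_f)=\{\theta_f^*e_i\}_{i\in\mathcal{I}}=\{f_i\}_{i\in\mathcal{I}}$.

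There is no genuine obstacle here: the whole proof is a chain of standard identifications. The only points deserving a little care are the preliminary observation that $\mathcal{B}$ is indeed a linear space (so that the statement ``linear'' is justified), and keeping straight the duality between the analysis operator $\theta_f$ and the synthesis operator $U_f=\theta_f^*$, together with the action $L^*e_i=f_i$ of the adjoint on the basis vectors $e_i$, which is exactly what makes both composites $\theta\circ\theta^{-1}$ and $\theta^{-1}\circ\theta$ equal the identity.
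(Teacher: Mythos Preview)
Your proof is correct and follows essentially the same route as the paper: injectivity via non-degeneracy of the inner product (the paper phrases it as $\theta_f=0\Rightarrow f=0$ after noting linearity, you as $\theta_f=\theta_g\Rightarrow f=g$), and surjectivity by setting $f_i=L^*e_i$ and computing $\theta_f(x)=\{\langle Lx,e_i\rangle\}=Lx$. Your additional remarks on well-definedness, the vector-space structure of $\mathcal{B}$, and the explicit check that $\theta^{-1}\circ\theta=\mathrm{id}$ are welcome elaborations that the paper leaves implicit.
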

	
	\begin{proof}
	It is clear that $\theta$ is linear. Let, now, $f=\{f_i\}_{i\in \mathcal{I}}\in \mathcal{B}$. We have: $$\begin{array}{rcl}
	\theta_f=0&\Longleftrightarrow& \;\forall x\in H,\; \theta_fx=0\\
	&\Longleftrightarrow&\; \forall x\in H,\; \{\langle x, f_i\rangle \}_{i\in \mathcal{I}}=0\\
	&\Longleftrightarrow&\; \forall x\in H,\, \forall i\in \mathcal{I},\; \langle x,f_i\rangle=0\\
	&\Longleftrightarrow&\; \forall i\in \mathcal{I},\; f_i=0\\
	&\Longleftrightarrow&\; f=0.
	\end{array}$$
	where $0$ denoted, without any confusion, the zero of the spaces $\mathbb{K}$, $H
	$, $\ell^2(\mathcal{I})$ and $B(H, \ell^2(\mathcal{I})\,)$.  Then $U$ is injective. Let, now, $L\in B(H,\ell^2(\mathcal{I}))$ and $\{e_i\}_{i\in \mathcal{I}}$ be the standard orthonormal basis for $\ell^2(\mathcal{I})$. We take $f=\{L^*(e_i)\}_{i\in \mathcal{I}}$, then $f$ is well a Bessel sequence for $H$. Moreover, its transform operator is exactely $L$. In fact, for all $x\in H$, we have:
	$$\begin{array}{rcl}
	\theta_f(x)&=& \{\langle x, L^*(e_i)\rangle \}_{i\in \mathcal{I}}\\
	&=&  \{\langle L\,x, e_i\rangle \}_{i\in \mathcal{I}}\\
	&=& L\,x.
	\end{array}$$
	Then the transform operator of $f=\{L^*(e_i)\}_{i\in \mathcal{I}}$ is the operator $L$. i.e. $\theta(f)=L$. Hence $\theta$ is surjective.\\
	\end{proof}
	
	The following result characterizes the operators $K\in B(H)$ for which a given Bessel sequence is a $K$-frame and characterize $K$-dual frames by operators. 
	\begin{proposition}\label{kframe1}
	Let $\{x_i\}_{i\in \mathcal{I}}$ be a Bessel sequence for $H$ whose the synthesis operator is $U_x$. Then  $\{x_i\}_{i\in \mathcal{I}}$ is a $U_xL$-frame for $H$ for all $L\in B(H,\ell^2(\mathcal{I})\,)$. And, conversely, if $\{x_i\}_{i\in \mathcal{I}}$ is a $K$-frame for $H$, where $K\in B(H)$, then there exists $L\in B(H,\ell^2(\mathcal{I})\,)$ such that $K=U_xL$.\\ \\
	Moreover, $\{y_i\}_{i\in \mathcal{I}}\subset H$ is a $K$-dual frame to $\{x_i\}_{i\in \mathcal{I}}$ if and only if there exists $L\in B(H,\, \ell^2(\mathcal{I})\,)$ such that $K=U_xL$ and $g_i=\theta^{-1}L:=L^*(e_i)$ for all $i\in \mathcal{I}$, where $\{e_i\}_{i\in \mathcal{I}}$ is the standard orthonormal basis for $\ell^2(\mathcal{I})$. In particular, we have:
	 $$card\left\{\{y_i\}_{i\in \mathcal{I}} \text{ Bessel sequence K-dual to }\{x_i\}_{i\in \mathcal{I}}\right\}=card\left\{L\in B(H,\ell^2(\mathcal{I})\,): \; K=U_xL\;\right\}.$$
	\end{proposition}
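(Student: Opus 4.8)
The plan is to translate every assertion into the operator dictionary of Theorem~\ref{teta} and then feed it into the range criterion of Lemma~\ref{Gavr1} and the duality criterion of Lemma~\ref{Gavr2}. Throughout, $U_x$ denotes the synthesis operator of $\{x_i\}_{i\in\mathcal{I}}$ and, for a Bessel sequence $\{y_i\}_{i\in\mathcal{I}}$, $\theta_y$ denotes its analysis (transform) operator; recall $U_x\in B(\ell^2(\mathcal{I}),H)$ and $\theta_y\in B(H,\ell^2(\mathcal{I}))$, and that $U_x\theta_y x=\sum_{i\in\mathcal{I}}\langle x,y_i\rangle x_i$ for all $x\in H$.

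First I would settle the two existence assertions. For any $L\in B(H,\ell^2(\mathcal{I}))$ the operator $U_xL$ lies in $B(H)$ and obviously $R(U_xL)\subseteq R(U_x)$, so the first part of Lemma~\ref{Gavr1} immediately yields that $\{x_i\}_{i\in\mathcal{I}}$ is a $U_xL$-frame. For the converse, assume $\{x_i\}_{i\in\mathcal{I}}$ is a $K$-frame for some $K\in B(H)$. By Lemma~\ref{Gavr2} there is a Bessel sequence $\{f_i\}_{i\in\mathcal{I}}$ with $Kx=\sum_{i\in\mathcal{I}}\langle x,f_i\rangle x_i=U_x\theta_f x$ for every $x$, hence $K=U_xL$ with $L:=\theta_f\in B(H,\ell^2(\mathcal{I}))$. (Alternatively one could start from $R(K)\subseteq R(U_x)$, valid by Lemma~\ref{Gavr1}, and invoke the Douglas theorem~\ref{thm3}; routing through Lemma~\ref{Gavr2} has the advantage of producing $L$ directly in $B(H,\ell^2(\mathcal{I}))$.)

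Next I would prove the characterization of $K$-dual frames, where the single point to verify is that $\theta_y^*e_i=y_i$ for every $i$; this is exactly the formula $\theta^{-1}L=\{L^*(e_i)\}$ of Theorem~\ref{teta} read with $L=\theta_y$, and it follows by computing $\langle\theta_y^*e_i,x\rangle=\langle e_i,\theta_y x\rangle=\overline{\langle x,y_i\rangle}$. Granting this, if $\{y_i\}_{i\in\mathcal{I}}$ is $K$-dual to $\{x_i\}_{i\in\mathcal{I}}$ in the sense of Lemma~\ref{Gavr2}, then $L:=\theta_y$ satisfies $K=U_x\theta_y=U_xL$ and $y_i=\theta_y^*e_i=L^*(e_i)$; conversely, if $L\in B(H,\ell^2(\mathcal{I}))$ satisfies $K=U_xL$ and $y_i=L^*(e_i)$ for all $i$, then by Theorem~\ref{teta} the family $\{y_i\}_{i\in\mathcal{I}}=\{L^*(e_i)\}_{i\in\mathcal{I}}$ is a Bessel sequence with analysis operator $L$, so $\sum_{i\in\mathcal{I}}\langle x,y_i\rangle x_i=U_xLx=Kx$ and $\{y_i\}_{i\in\mathcal{I}}$ is $K$-dual to $\{x_i\}_{i\in\mathcal{I}}$.

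Finally, the cardinality identity follows from the bijection $\{y_i\}_{i\in\mathcal{I}}\mapsto\theta_y$ between the set of Bessel sequences $K$-dual to $\{x_i\}_{i\in\mathcal{I}}$ and the set $\{L\in B(H,\ell^2(\mathcal{I})):K=U_xL\}$: it is well defined with range inside the target by the forward implication above, injective because $\theta$ is injective by Theorem~\ref{teta}, and surjective because $L\mapsto\{L^*(e_i)\}_{i\in\mathcal{I}}$ supplies a preimage by the converse implication. I do not expect a genuine obstacle; the only delicate point is the bookkeeping of adjoints — identifying $U_x\theta_y x$ with the frame-expansion sum and checking $\theta_y^*e_i=y_i$ — and this is precisely what Theorem~\ref{teta} is designed to handle.
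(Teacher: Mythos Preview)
Your proposal is correct and is essentially a detailed unpacking of the paper's one-line proof, which simply cites Lemma~\ref{Gavr2} and Theorem~\ref{teta}. Your additional appeal to Lemma~\ref{Gavr1} for the forward existence assertion is a harmless variant (one could equally get it from Lemma~\ref{Gavr2} by taking $f_i=L^*(e_i)$), and the rest of your argument is exactly the bijection of Theorem~\ref{teta} combined with the $K$-dual criterion of Lemma~\ref{Gavr2}, as the paper intends.
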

	\begin{proof}
	By Lemma \ref{Gavr2} and Theorem \ref{teta}.
	\end{proof}
	
	\begin{corollary}
	Let $K\in B(H)$. If $\{x_i\}_{i\in \mathcal{I}}$ is $K$-frame for $H$, then  $\{x_i\}_{i\in \mathcal{I}}$ is $KM$-frame for $H$ for all $M\in B(H)$. Moreover, if $\{a_i\}_{i\in \mathcal{I}}$ is a $K$-dual frame to  $\{x_i\}_{i\in \mathcal{I}}$, then $\{M^*(a_i)\}_{i\in \mathcal{I}}$ is a $KM$-dual frame to  $\{x_i\}_{i\in \mathcal{I}}$.\\
		\end{corollary}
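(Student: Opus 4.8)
The plan is to reduce everything to Proposition \ref{kframe1}. Since $\{x_i\}_{i\in\mathcal{I}}$ is a $K$-frame, it is in particular a Bessel sequence; let $U_x$ denote its synthesis operator. By Proposition \ref{kframe1} there exists $L\in B(H,\ell^2(\mathcal{I})\,)$ with $K=U_xL$. Then $KM=U_x(LM)$ and $LM\in B(H,\ell^2(\mathcal{I})\,)$, so the first assertion of Proposition \ref{kframe1} immediately yields that $\{x_i\}_{i\in\mathcal{I}}$ is a $KM$-frame. (Equivalently, one could argue directly from Lemma \ref{Gavr1}, since $R(KM)\subset R(K)\subset R(U_x)$.)

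For the statement about duals, I would again invoke the characterization in Proposition \ref{kframe1}. If $\{a_i\}_{i\in\mathcal{I}}$ is a $K$-dual frame to $\{x_i\}_{i\in\mathcal{I}}$, then there is some $L\in B(H,\ell^2(\mathcal{I})\,)$ with $K=U_xL$ and $a_i=L^*(e_i)$ for all $i\in\mathcal{I}$, where $\{e_i\}_{i\in\mathcal{I}}$ is the standard orthonormal basis of $\ell^2(\mathcal{I})$. Setting $L':=LM\in B(H,\ell^2(\mathcal{I})\,)$, we get $U_xL'=KM$ and $(L')^*(e_i)=M^*L^*(e_i)=M^*(a_i)$ for every $i$. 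Hence, by the ``moreover'' part of Proposition \ref{kframe1}, $\{M^*(a_i)\}_{i\in\mathcal{I}}$ is a $KM$-dual frame to $\{x_i\}_{i\in\mathcal{I}}$.

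Alternatively, a direct verification is just as quick: the family $\{M^*(a_i)\}_{i\in\mathcal{I}}$ is Bessel because its analysis operator equals $\theta_a\circ M$, a composition of bounded operators, where $\theta_a$ is the analysis operator of $\{a_i\}_{i\in\mathcal{I}}$ (alternatively, this follows from Theorem \ref{teta} applied to $\theta_aM\in B(H,\ell^2(\mathcal{I})\,)$); and for every $x\in H$,
\[
\sum_{i\in\mathcal{I}}\langle x,M^*(a_i)\rangle x_i=\sum_{i\in\mathcal{I}}\langle Mx,a_i\rangle x_i=K(Mx)=KMx,
\]
using that $\{a_i\}_{i\in\mathcal{I}}$ is a $K$-dual frame to $\{x_i\}_{i\in\mathcal{I}}$; Lemma \ref{Gavr2} then gives the conclusion. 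There is no real obstacle here beyond bookkeeping: the only points meriting a remark are the Bessel property of $\{M^*(a_i)\}_{i\in\mathcal{I}}$ and the legitimacy of rearranging the series above, and both are immediate from the boundedness of the synthesis and analysis operators involved.
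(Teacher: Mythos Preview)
Your proposal is correct and follows essentially the same approach as the paper: the paper uses the range inclusion $R(KM)\subset R(K)\subset R(U_x)$ for the first part (which you mention as an alternative) and, for the dual part, writes $K=U_x\theta_a$ so that $KM=U_x(\theta_aM)$ and identifies the $KM$-dual as $\{(\theta_aM)^*(e_i)\}=\{M^*(a_i)\}$, which is exactly your argument with $L=\theta_a$.
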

\begin{proof}
Denote by $U_x$ the synthesis operator of $\{x_i\}_{i\in \mathcal{I}}$. We have $R(KM)\subset R(K)\subset R(U_x)$. Then $\{x_i\}_{i\in \mathcal{I}}$ is a $KM$-frame for $H$. On the other hand, we have $K=U_x\theta_a$, where $\theta_a$ is the transforme operator of $\{a_i\}_{i\in \mathcal{I}}$. Then $KM=U_x\theta_aM$. Hence a $KM$-dual frame of $\{x_i\}_{i\in \mathcal{I}}$ is $\{(\theta_aM)^*(e_i)\}_{i\in \mathcal{I}}=\{M(\theta_a^*(e_i)\,)\}_{i\in \mathcal{I}}=\{M^*(a_i)\}_{i\in \mathcal{I}}$, where $\{e_i\}_{i\in \mathcal{I}}$ is the standard orthonormal basis for $\ell^2(\mathcal{I})$.\\
\end{proof}

	 By definition of $K$-frames, a M-D-G system $\mathcal{G}(g,L,M,N)$ is a $K$-frame for $\ell^2(\mathbb{S})$ if there exist $0< A\leq B< \infty$ such that for all $f\in \ell^2(\mathbb{S})$, we have:
	$$A\|K^*f\|^2\leq \displaystyle{\sum_{l\in \mathbb{N}_L}\sum_{n\in \mathbb{Z}}\sum_{m\in \mathbb{N}_M}\left \vert \langle f,E_{\frac{m}{M}}T_{nN}g_l\rangle\right \vert^2}\leq B\|f\|^2.$$\\
If $A\|K^*f\|^2= \displaystyle{\sum_{l\in \mathbb{N}_L}\sum_{n\in \mathbb{Z}}\sum_{m\in \mathbb{N}_M}\left \vert \langle f,E_{\frac{m}{M}}T_{nN}g_l\rangle\right \vert^2}$ for all $f\in \ell^2(\mathbb{S})$, the, we say that $\mathcal{G}(g,L,M,N)$ is a tight $K$-M-D-G frame. \\

The following result is a sufficient matrix-condition for a M-D-G system  in $\ell^2(\mathbb{S})$  to be frame.
\begin{theorem}
	Let $g\in \ell^2(\mathbb{S})$ such that $\vert supp(g)\vert < M$ and let $K\in B(\, \ell^2(\mathbb{S})\,)$. Assume that there exist $0<A\leq B< \infty$ such that for all $j\in \mathbb{S}_N$, we have: 
	$$\displaystyle{\frac{A\|K\|^2}{M}}\leq \displaystyle{\sum_{l\in \mathbb{N}_L}\left(\mathcal{M}_{g_l}(j)\mathcal{M}_{g_l}^*(j)\right)_{0,0}}\leq \displaystyle{\frac{B}{M}}.$$
	Then $\mathcal{G}(g,L,N,M)$ is a $K$-M-D-G frame for $\ell^2(\mathbb{S})$ with bounds $A$ and $B$.
	
	\end{theorem}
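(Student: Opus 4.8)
The plan is to adapt, essentially verbatim, the argument proving the first Proposition of this section (the matrix-characterization of frames for windows with $|\mathrm{supp}(g_l)|<M$), and then to bring in the operator $K$ only at the very end through the trivial inequality $\|K^{*}f\|\le\|K\|\,\|f\|$. First I would note that, each $g_l$ having finite support, we have $g_l\in\ell_0(\mathbb{S})$, so by the observation following Theorem \ref{thm2} the system $\mathcal{G}(g,L,M,N)$ is a Bessel sequence; in particular its analysis operator $\theta$ is bounded, so the map $f\mapsto\sum_{l,n,m}|\langle f,E_{\frac{m}{M}}T_{nN}g_l\rangle|^{2}=\|\theta f\|^{2}$ is continuous on $\ell^2(\mathbb{S})$.

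Next, fix $f\in\ell_0(\mathbb{S})$ and apply Lemma \ref{lem4}. Since $|\mathrm{supp}(g_l)|<M$, for every $l\in\mathbb{N}_L$, every $j$ and every $k\in\mathbb{Z}\setminus\{0\}$ the supports of $g_l(\cdot)$ and $g_l(\cdot+kM)$ are disjoint, so $\left(\mathcal{M}_{g_l}(j)\mathcal{M}_{g_l}^{*}(j)\right)_{0,k}=0$; hence $F_2(f)=0$ and
$$\sum_{l\in\mathbb{N}_L}\sum_{n\in\mathbb{Z}}\sum_{m\in\mathbb{N}_M}\left|\langle f,E_{\frac{m}{M}}T_{nN}g_l\rangle\right|^{2}=F_1(f)=M\sum_{j\in\mathbb{S}}\left(\sum_{l\in\mathbb{N}_L}\mathcal{M}_{g_l}(j)\mathcal{M}_{g_l}^{*}(j)\right)_{0,0}|f(j)|^{2}.$$
By Lemma \ref{lem5} the quantity $j\mapsto\sum_{l\in\mathbb{N}_L}\left(\mathcal{M}_{g_l}(j)\mathcal{M}_{g_l}^{*}(j)\right)_{0,0}$ is $N$-periodic, so the assumption $\frac{A\|K\|^{2}}{M}\le\sum_{l\in\mathbb{N}_L}\left(\mathcal{M}_{g_l}(j)\mathcal{M}_{g_l}^{*}(j)\right)_{0,0}\le\frac{B}{M}$, made for $j\in\mathbb{S}_N$, in fact holds for all $j\in\mathbb{S}$. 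Substituting this into the identity above yields, for all $f\in\ell_0(\mathbb{S})$,
$$A\|K\|^{2}\,\|f\|^{2}\le\sum_{l\in\mathbb{N}_L}\sum_{n\in\mathbb{Z}}\sum_{m\in\mathbb{N}_M}\left|\langle f,E_{\frac{m}{M}}T_{nN}g_l\rangle\right|^{2}\le B\|f\|^{2}.$$

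Finally I would pass to the limit: since $\ell_0(\mathbb{S})$ is dense in $\ell^2(\mathbb{S})$ and each of the three terms above depends continuously on $f$ (for the middle one by the Bessel property recorded above), these inequalities hold for every $f\in\ell^2(\mathbb{S})$. For the lower $K$-frame bound, combine the left inequality with $\|K^{*}f\|\le\|K^{*}\|\,\|f\|=\|K\|\,\|f\|$, which gives $A\|K^{*}f\|^{2}\le A\|K\|^{2}\|f\|^{2}\le\sum_{l,n,m}\left|\langle f,E_{\frac{m}{M}}T_{nN}g_l\rangle\right|^{2}$; the upper estimate $\le B\|f\|^{2}$ is already in hand. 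Hence $\mathcal{G}(g,L,M,N)$ is a $K$-M-D-G frame for $\ell^2(\mathbb{S})$ with bounds $A$ and $B$. No step presents a genuine difficulty; the only point deserving a little care is the density/continuity passage, which is precisely where the finite-support hypothesis (guaranteeing the Bessel property) is used.
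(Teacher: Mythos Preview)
Your proposal is correct and follows essentially the same route as the paper: use Lemma~\ref{lem4} and the support condition to reduce to $F_1(f)$, invoke $N$-periodicity (Lemma~\ref{lem5}) to extend the hypothesis from $\mathbb{S}_N$ to all of $\mathbb{S}$, obtain $A\|K\|^{2}\|f\|^{2}\le\sum\le B\|f\|^{2}$ on $\ell_0(\mathbb{S})$, then pass to $\ell^2(\mathbb{S})$ by density and finish with $\|K^{*}f\|\le\|K\|\,\|f\|$. Your write-up is in fact a bit more careful than the paper's, explicitly justifying the continuity of the middle term via the Bessel property before taking the density limit.
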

	
	\begin{proof}
Let $f\in \ell_0(\mathbb{S})$. Since $\vert supp(g_l)\vert< M$ for all $l\in \mathbb{N}_L$, then by lemma \ref{lem4}, we have: $$\displaystyle{\sum_{l\in \mathbb{N}_L}\sum_{n\in \mathbb{Z}}\sum_{m\in \mathbb{N}_M}\vert \langle f,E_{\frac{m}{M}}T_{nN}g_l \rangle \vert^2}=F_1(f)=M\,\displaystyle{\sum_{l\in \mathbb{N}_L}\sum_{j\in \mathbb{S}} \left(\mathcal{M}_{g_l}(j)\mathcal{M}_{g_l}^*(j)\right)_{0,0} \vert f(j)\vert^2}.$$
By lemma \ref{lem3'}, we have for all $j\in \mathbb{S}$, $$\displaystyle{\frac{A\|K\|^2}{M}}\leq \displaystyle{\sum_{l\in \mathbb{N}_L}\left(\mathcal{M}_{g_l}(j)\mathcal{M}_{g_l}^*(j)\right)_{0,0}}\leq \displaystyle{\frac{B}{M}}.$$
Then $$A\|K\|^2\|f\|^2\leq \displaystyle{\sum_{l\in \mathbb{N}_L}\sum_{n\in \mathbb{Z}}\sum_{m\in \mathbb{N}_M}\vert \langle f,E_{\frac{m}{M}}T_{nN}g_l \rangle \vert^2} \leq B\|f\|^2.$$
Thus for all $f\in \ell_0(\mathbb{S})$, we have: $$A\|K^*(f)\|^2\leq \displaystyle{\sum_{l\in \mathbb{N}_L}\sum_{n\in \mathbb{Z}}\sum_{m\in \mathbb{N}_M}\vert \langle f,E_{\frac{m}{M}}T_{nN}g_l \rangle \vert^2} \leq B\|f\|^2.$$
Hence, by density of $\ell_0(\mathbb{S})$ in $\ell^2(\mathbb{S})$, $\mathcal{G}(g,L,M,N)$ is a M-D-G $K$-frame for $\ell^2(\mathbb{S})$.\\
	\end{proof}
The following result is a direct consequence of the proposition \ref{kframe1} for M-D-G systems. It is about construction of M-D-G $K$-frames for $\ell^2(\mathbb{S})$.
\begin{proposition}\label{prop54}
Let $\mathcal{G}(g,L,M,N)$ be a M-D-G Bessel sequence with synthesis operator $U_g$. Then $\mathcal{G}(g,L,M,N)$ is a M-D-G $U_gL$-frame for $\ell^2(\mathbb{S})$ for all\\ $L\in B(\,\ell^2(\mathbb{S}),\;\ell^2(\mathbb{N}_M\times \mathbb{Z}\times \mathbb{N}_L)\;)$.\\
\end{proposition}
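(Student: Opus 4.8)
The plan is to invoke Proposition \ref{kframe1} directly, after instantiating the abstract Hilbert space and index set appropriately. First I would set $H:=\ell^2(\mathbb{S})$ and take the countable index set $\mathcal{I}:=\mathbb{N}_M\times\mathbb{Z}\times\mathbb{N}_L$, so that $\ell^2(\mathcal{I})=\ell^2(\mathbb{N}_M\times\mathbb{Z}\times\mathbb{N}_L)$. With this identification, the M-D-G family $\mathcal{G}(g,L,M,N)=\{E_{\frac{m}{M}}T_{nN}g_l\}_{(m,n,l)\in\mathcal{I}}$ is exactly a sequence $\{x_i\}_{i\in\mathcal{I}}$ in $H$ indexed by $\mathcal{I}$, and by hypothesis it is Bessel, with synthesis operator $U_g$.

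Next, since $\{x_i\}_{i\in\mathcal{I}}$ is a Bessel sequence in $H$ with synthesis operator $U_g$, Proposition \ref{kframe1} asserts that $\{x_i\}_{i\in\mathcal{I}}$ is a $U_g L$-frame for $H$ for every operator $L\in B(H,\ell^2(\mathcal{I}))=B(\ell^2(\mathbb{S}),\ell^2(\mathbb{N}_M\times\mathbb{Z}\times\mathbb{N}_L))$. Translating back through the identification above, this is precisely the claim that $\mathcal{G}(g,L,M,N)$ is a M-D-G $U_g L$-frame for $\ell^2(\mathbb{S})$ for all such $L$, which completes the argument.

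There is essentially no analytic obstacle here; the only care needed is bookkeeping. One must make sure the abstract countable index set $\mathcal{I}$ of Proposition \ref{kframe1} is taken to be $\mathbb{N}_M\times\mathbb{Z}\times\mathbb{N}_L$ so that $\ell^2(\mathcal{I})$ matches the space $\ell^2(\mathbb{N}_M\times\mathbb{Z}\times\mathbb{N}_L)$ in the statement, and one must keep the symbol $L$ denoting the number of windows distinct from the operator $L$ ranging over $B(\ell^2(\mathbb{S}),\ell^2(\mathbb{N}_M\times\mathbb{Z}\times\mathbb{N}_L))$. It is also worth noting, as a by-product of the same proposition, that every $U_g L$-dual M-D-G Bessel system to $\mathcal{G}(g,L,M,N)$ is obtained as $\{L^*(e_{m,n,l})\}_{(m,n,l)\in\mathbb{N}_M\times\mathbb{Z}\times\mathbb{N}_L}$, where $\{e_{m,n,l}\}$ is the standard orthonormal basis of $\ell^2(\mathbb{N}_M\times\mathbb{Z}\times\mathbb{N}_L)$.
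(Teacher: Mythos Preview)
Your proof is correct and follows exactly the same approach as the paper, which simply states that the result is a direct consequence of Proposition \ref{kframe1} specialized to $H=\ell^2(\mathbb{S})$ and $\mathcal{I}=\mathbb{N}_M\times\mathbb{Z}\times\mathbb{N}_L$. One minor caveat on your closing by-product remark: for a \emph{fixed} $K=U_gL$, the $K$-dual Bessel sequences are the families $\{L'^{*}(e_{m,n,l})\}$ for those $L'$ satisfying $U_gL'=U_gL$ (not just the single given $L$), and as the paper's subsequent remark notes, these need not be M-D-G systems.
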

\begin{remark}
Let $\{e_{m,n,l}\}_{l\in \mathbb{N}_L,\, n\in \mathbb{Z},\, m\in \mathbb{N}_M}$ be the standard orthonormal basis for $\ell^2(\mathbb{N}_M\times \mathbb{Z}\times \mathbb{N}_L)$. As seen before, $\{L^*(e_{m,n,l})\}_{l\in \mathbb{N}_L,\, n\in \mathbb{Z},\, m\in \mathbb{N}_M}$ is a $U_gL$-dual frame to $\mathcal{G}(g,L,M,N)$. But it is not necessary a M-D-G system.\\
\end{remark}

\begin{proposition}\label{prop56}
 Let $\mathcal{G}(g,L,M,N)$ and $\mathcal{G}(h,L,M,N)$ be two M-D-G Bessel sequences in $\ell^2(\mathbb{S})$ such that $U_g$ and $\theta_h$ are the synthesis operator of $\mathcal{G}(g,L,M,N)$ and the transform operator of $\mathcal{G}(h,L,M,N)$ respectively. Denote $S_{h,g}:=U_g\theta_h$. Then $\mathcal{G}(g,L,M,N)$ is a $S_{h,g}$-M-D-G frame for $\ell^2(\mathbb{S})$. Moreover, $\mathcal{G}(h,L,M,N)$ is a $S_{h,g}$-dual frame to  $\mathcal{G}(g,L,M,N)$.\\
\end{proposition}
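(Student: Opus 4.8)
The plan is to recognize Proposition \ref{prop56} as an instantiation of Proposition \ref{kframe1} (equivalently Proposition \ref{prop54}) applied to a particular bounded operator, namely the transform operator of the second Bessel system. First I would observe that since $\mathcal{G}(h,L,M,N)$ is a M-D-G Bessel sequence in $\ell^2(\mathbb{S})$, its transform operator $\theta_h$ is well defined, linear and bounded, i.e. $\theta_h\in B(\ell^2(\mathbb{S}),\ell^2(\mathbb{N}_M\times\mathbb{Z}\times\mathbb{N}_L))$; hence $S_{h,g}:=U_g\theta_h$ is exactly a composition of the type considered in Proposition \ref{prop54}.

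For the first assertion, I would simply apply Proposition \ref{prop54} with the operator $\theta_h$: since $\mathcal{G}(g,L,M,N)$ is a M-D-G Bessel sequence with synthesis operator $U_g$, it is a $U_g\theta_h$-M-D-G frame for $\ell^2(\mathbb{S})$, that is, a $S_{h,g}$-M-D-G frame. (Alternatively this follows from Lemma \ref{Gavr1}(1), since $R(S_{h,g})=R(U_g\theta_h)\subset R(U_g)$ trivially, and $\mathcal{G}(g,L,M,N)$ is Bessel.)

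For the duality assertion, I would invoke the characterization of $K$-dual frames in Lemma \ref{Gavr2}: it suffices to check that $S_{h,g}f=\sum_{l\in\mathbb{N}_L}\sum_{n\in\mathbb{Z}}\sum_{m\in\mathbb{N}_M}\langle f,E_{\frac{m}{M}}T_{nN}h_l\rangle\,E_{\frac{m}{M}}T_{nN}g_l$ for every $f\in\ell^2(\mathbb{S})$. This is immediate by unwinding the definitions, because $\theta_h f=\{\langle f,E_{\frac{m}{M}}T_{nN}h_l\rangle\}_{m,n,l}$ and $U_g$ maps a coefficient sequence to the corresponding linear combination of the $E_{\frac{m}{M}}T_{nN}g_l$; since both systems are Bessel, $\mathcal{G}(h,L,M,N)$ is thus an $S_{h,g}$-dual frame to $\mathcal{G}(g,L,M,N)$. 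Equivalently, one can read off the conclusion from the explicit description in Proposition \ref{kframe1}: writing $\{e_{m,n,l}\}$ for the standard orthonormal basis of $\ell^2(\mathbb{N}_M\times\mathbb{Z}\times\mathbb{N}_L)$, the adjoint $\theta_h^*=U_h$ satisfies $\theta_h^*(e_{m,n,l})=E_{\frac{m}{M}}T_{nN}h_l$, so $\mathcal{G}(h,L,M,N)=\{\theta_h^*(e_{m,n,l})\}$ is precisely the $S_{h,g}$-dual frame associated with $L:=\theta_h$ in Proposition \ref{kframe1}.

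There is no genuine obstacle here; the statement is a corollary of material already established. The only point needing a little care is the bookkeeping: identifying the index set with $\mathbb{N}_M\times\mathbb{Z}\times\mathbb{N}_L$, verifying that $\theta_h$ indeed lands in $\ell^2(\mathbb{N}_M\times\mathbb{Z}\times\mathbb{N}_L)$, and confirming the identity $\theta_h^*(e_{m,n,l})=E_{\frac{m}{M}}T_{nN}h_l$ directly from the definitions of $U_h$, $\theta_h$ and the standard basis. Consequently the write-up should be short: one line reducing the frame statement to Proposition \ref{prop54}, and one line reducing the dual statement to Lemma \ref{Gavr2} (or Proposition \ref{kframe1}).
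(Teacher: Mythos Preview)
Your proposal is correct and follows essentially the same route as the paper: the paper's proof is a one-liner invoking Proposition \ref{kframe1} with $L=\theta_h$, noting that $S_{h,g}=U_g\theta_h$ gives the $S_{h,g}$-frame property and that $\theta^{-1}(\theta_h)=\mathcal{G}(h,L,M,N)$ gives the $S_{h,g}$-dual. Your alternative justifications via Lemma \ref{Gavr1} and Lemma \ref{Gavr2} are equivalent unpackings of the same content.
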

\begin{proof}
We have $S_{h,g}=U_g\theta_h$, then by proposition \ref{kframe1}, $\mathcal{G}(g,L,M,N)$ is a $S_{h,g}$-frame for $\ell^2(\mathbb{S})$ and $\theta^{-1}(\theta_h)=\mathcal{G}(h,L,M,N)$ is a $S_{h,g}$-dual frame. \\
\end{proof}
	
\begin{corollary}
Let $L,M,N\in \mathbb{N}$ be arbitrary. Let $g=\{g_l\}_{l\in \mathbb{N}_L},h=\{h_l\}_{l\in \mathbb{N}_L}\in \ell_0(\mathbb{S})$. Then $\mathcal{G}(g,L,M,N)$ is a $S_{h,g}$-M-G-G frame for $\ell^2(\mathbb{S})$, where $S_{h,g}:=U_gU_h^*$.\\
\end{corollary}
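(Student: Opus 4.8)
The plan is to deduce the statement immediately from Proposition~\ref{prop56}, after checking that its hypotheses are met. The only thing to verify is that both $\mathcal{G}(g,L,M,N)$ and $\mathcal{G}(h,L,M,N)$ are M-D-G Bessel sequences in $\ell^2(\mathbb{S})$. Since $g=\{g_l\}_{l\in\mathbb{N}_L}\subset\ell_0(\mathbb{S})$, the direct consequence of Theorem~\ref{thm2} recorded just after its statement gives that $\mathcal{G}(g,L,M,N)$ is a Bessel sequence in $\ell^2(\mathbb{S})$; applying the same fact to $h=\{h_l\}_{l\in\mathbb{N}_L}\subset\ell_0(\mathbb{S})$ shows that $\mathcal{G}(h,L,M,N)$ is Bessel as well. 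Note that no condition relating $L$, $M$ and $N$ is needed here, which is why the statement holds for arbitrary $L,M,N\in\mathbb{N}$.

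Once both systems are Bessel, their synthesis operators $U_g$ and $U_h$ are bounded, hence so are the analysis (transform) operators $\theta_g=U_g^*$ and $\theta_h=U_h^*$, the analysis operator being by definition the adjoint of the synthesis operator. In particular the operator $S_{h,g}:=U_gU_h^*$ of the corollary coincides with the operator $S_{h,g}=U_g\theta_h$ appearing in Proposition~\ref{prop56}. Applying Proposition~\ref{prop56} to the pair $\mathcal{G}(g,L,M,N)$, $\mathcal{G}(h,L,M,N)$ therefore yields at once that $\mathcal{G}(g,L,M,N)$ is an $S_{h,g}$-M-D-G frame for $\ell^2(\mathbb{S})$ (and, moreover, that $\mathcal{G}(h,L,M,N)$ is an $S_{h,g}$-dual frame to it, although the corollary only records the first assertion).

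Since the argument is a one-line reduction, there is essentially no obstacle; the only subtlety is the bookkeeping identification $\theta_h=U_h^*$ together with the observation that Proposition~\ref{prop56} (and, underneath it, Proposition~\ref{kframe1} and Lemma~\ref{Gavr2}) is stated for arbitrary parameters. It is worth emphasising what makes the corollary useful in the context of this section: by choosing $h$ so that $S_{h,g}=U_gU_h^*$ fails to be surjective, equivalently so that $R(U_gU_h^*)$ is a proper subspace of $\ell^2(\mathbb{S})$, one obtains from finitely supported windows $g$ an $S_{h,g}$-M-D-G frame that is not an ordinary M-D-G frame, which furnishes the promised examples of M-D-G $K$-frames that are not M-D-G frames.
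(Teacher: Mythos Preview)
Your proof is correct and follows exactly the same approach as the paper's own proof: both simply invoke the fact (recorded after Theorem~\ref{thm2}) that finitely supported windows yield Bessel M-D-G systems, and then apply Proposition~\ref{prop56}. Your version is just more explicit about the identification $\theta_h=U_h^*$ and adds contextual commentary, but the mathematical content is identical.
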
	
\begin{proof}
Since the M-D-G system associated to a sequence in $\ell_0(\mathbb{S})$ is a Bessel sequence in $\ell^2(\mathbb{S})$, proposition \ref{prop56} completes the proof.\\
\end{proof}
\begin{remark}
Unlike M-D-G ordinary frames, there is no conditions on the parameters $M$, $N$ and $L$ for M-D-G $K$-frames. Propositions \ref{prop54} and \ref{prop56} give construction methods to construct $K$-M-D-G frames which are not M-D-G frames for $\ell^2(\mathbb{S})$.\\
\end{remark}
\begin{example}
Let $M,N,L\in \mathbb{N}$ be such that $N>LM$ and let $g=\{g_l\}_{l\in \mathbb{N}_L}\subset \ell_0(\mathbb{Z})$ with synthesis operator $U_g$. Let $L\in B(\,\ell^2(\mathbb{S}),\;\ell^2(\mathbb{N}_M\times \mathbb{Z}\times \mathbb{N}_L)\;)$ be arbitrary. Then $\mathcal{G}(g,L,M,N)$ is a M-D-G $U_gL$-frame for $\ell^2(\mathbb{Z})$ but not a M-D-G frame  since $N>LM$.\\
\end{example}

Let $\mathcal{G}(u,L,M,N)$ and $\mathcal{G}(v,L,M,N)$ be two  M-D-G Bessel systems for $\ell^2(\mathbb{S})$ whose the synthesis operators are $U_u$ and $U_v$ repectively. Denote $S_{u,v}=U_uU_v^*$. In what above, we have shown that $\mathcal{G}(u,L,M,N)$ is a M-D-G $S_{u,v}$-frame and $\mathcal{G}(v,L,M,N)$ is a $S_{u,v}-$dual M-D-G frame. The following result gives other M-D-G systems which are  M-D-G $S_{u,v}$-frames and have $S_{u,v}$-dual M-D-G frames.
\begin{proposition}
Let $\mathcal{G}(g,L,M,N)$ be a M-D-G frame for $\ell^2(\mathbb{S})$ with frame operator $S$. Then:
\begin{enumerate}
\item $\mathcal{G}(g,L,M,N)$ is a M-D-G $S_{u,v}$-frame for $\ell^2(\mathbb{S})$ and $\mathcal{G}(S_{v,u}S^{-1}g,L,M,N) $ is a $S_{u,v}$-dual frame. 
\item $\mathcal{G}(S_{u,v}g,L,M,N)$ is a M-D-G $S_{u,v}$-frame for $\ell^2(\mathbb{S})$ and $\mathcal{G}(S^{-1}g,L,M,N) $ is a $S_{u,v}$-dual frame.
 \end{enumerate}
Where for an operator $M\in B(\, \ell^2(\mathbb{S})\,)$, $Mg:=\{Mg_l\}_{l\in \mathbb{N}_L}$.\\
\end{proposition}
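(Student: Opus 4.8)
The plan is to reduce the whole statement to two tools: the fact that a frame has a surjective synthesis operator, and the reconstruction-identity characterization of $K$-frames and $K$-dual frames from Lemma~\ref{Gavr2} (together with the range characterization in Lemma~\ref{Gavr1}). Write $\theta_g=U_g^{*}$, $\theta_u=U_u^{*}$, $\theta_v=U_v^{*}$ for the analysis operators, so that $S=U_g\theta_g$ is the (invertible) frame operator of $\mathcal{G}(g,L,M,N)$, while $S_{u,v}=U_uU_v^{*}=U_u\theta_v$ and $S_{v,u}=U_vU_u^{*}=U_v\theta_u$; note $S_{u,v}^{*}=S_{v,u}$ and $(S^{-1})^{*}=S^{-1}$ because $S$ is self-adjoint. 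Since $\mathcal{G}(g,L,M,N)$ is a frame, $U_g$ is onto, i.e. $R(U_g)=\ell^2(\mathbb{S})$.

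First I would record an elementary lemma: if $T\in B(\ell^2(\mathbb{S}))$ commutes with every $E_{\frac{m}{M}}T_{nN}$, then $E_{\frac{m}{M}}T_{nN}(Th_l)=T(E_{\frac{m}{M}}T_{nN}h_l)$, so the formal synthesis map of $\mathcal{G}(Th,L,M,N)$ agrees on finitely supported coefficient sequences with the bounded operator $TU_h$; hence $\mathcal{G}(Th,L,M,N)$ is a Bessel system with synthesis operator $TU_h$ and analysis operator $\theta_h T^{*}$. By Lemma~\ref{lem2}, the operators $S_{u,v}$, $S_{v,u}$, $S$ and $S^{-1}$ all commute with the $E_{\frac{m}{M}}T_{nN}$, so this observation applies to each of the window families $S_{v,u}S^{-1}g$, $S_{u,v}g$, $S^{-1}g$: all of them generate M-D-G Bessel systems, with explicit synthesis and analysis operators.

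For part~(1), the reconstruction operator associated to the pair $\big(\mathcal{G}(g,L,M,N),\ \mathcal{G}(S_{v,u}S^{-1}g,L,M,N)\big)$ is exactly $U_g\,\theta_{S_{v,u}S^{-1}g}$, and using $\theta_{S_{v,u}S^{-1}g}=\theta_g\,(S_{v,u}S^{-1})^{*}=\theta_g\,S^{-1}S_{u,v}$ together with $U_g\theta_g=S$ I would get, for all $f\in\ell^2(\mathbb{S})$,
\[
U_g\,\theta_{S_{v,u}S^{-1}g}\,f \;=\; U_g\theta_g\,S^{-1}S_{u,v}\,f \;=\; S\,S^{-1}S_{u,v}\,f \;=\; S_{u,v}\,f .
\]
Since both systems are Bessel, Lemma~\ref{Gavr2} yields at once that $\mathcal{G}(g,L,M,N)$ is an $S_{u,v}$-frame with $\mathcal{G}(S_{v,u}S^{-1}g,L,M,N)$ as an $S_{u,v}$-dual frame (the $S_{u,v}$-frame property alone also drops out of $R(S_{u,v})\subset\ell^2(\mathbb{S})=R(U_g)$ and Lemma~\ref{Gavr1}).

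Part~(2) is the same computation with the roles redistributed: $\mathcal{G}(S_{u,v}g,L,M,N)$ has synthesis operator $S_{u,v}U_g$ and $\mathcal{G}(S^{-1}g,L,M,N)$ has analysis operator $\theta_g S^{-1}$, so the reconstruction operator of this pair is $S_{u,v}U_g\theta_g S^{-1}=S_{u,v}S\,S^{-1}=S_{u,v}$ on $\ell^2(\mathbb{S})$; hence $\mathcal{G}(S^{-1}g,L,M,N)$ is an $S_{u,v}$-dual frame to $\mathcal{G}(S_{u,v}g,L,M,N)$, and since $R(U_{S_{u,v}g})=S_{u,v}\big(R(U_g)\big)=R(S_{u,v})$, Lemma~\ref{Gavr1} even shows the latter is a tight $S_{u,v}$-frame. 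The only point requiring genuine care — and what I would double-check — is the adjoint bookkeeping ($S_{u,v}^{*}=S_{v,u}$, $(S^{-1})^{*}=S^{-1}$) and the order of composition in the reconstruction operators; passing from finitely supported coefficients, and from $\ell_0(\mathbb{S})$, to the whole space is routine density and continuity.
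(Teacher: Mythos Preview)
Your proof is correct and follows essentially the same route as the paper: both use the commutation of $S_{u,v}$, $S_{v,u}$, $S$ and $S^{-1}$ with the $E_{\frac{m}{M}}T_{nN}$ (Lemma~\ref{lem2}) to turn the canonical reconstruction $f=U_g\theta_gS^{-1}f$ into the identity $S_{u,v}f=U_g\theta_{S_{v,u}S^{-1}g}f$ (part~(1)) and $S_{u,v}f=U_{S_{u,v}g}\theta_{S^{-1}g}f$ (part~(2)), then invoke the $K$-dual characterization (Lemma~\ref{Gavr2}). The only cosmetic difference is that the paper manipulates the series $\sum_{l,n,m}\langle\,\cdot\,,E_{\frac{m}{M}}T_{nN}h_l\rangle E_{\frac{m}{M}}T_{nN}g_l$ directly, whereas you package the same computation at the level of synthesis and analysis operators; your extra remarks (the surjectivity of $U_g$ via Lemma~\ref{Gavr1}, and the tightness in part~(2) from $R(U_{S_{u,v}g})=R(S_{u,v})$) are correct bonuses not present in the paper's argument.
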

\begin{proof}
Since $\mathcal{G}(g,L,M,N)$ is a M-D-G frame for $\ell^2(\mathbb{S})$ with frame operator $S$, then, by lemma \ref{lem2}, for all $f\in \ell^2(\mathbb{S})$, we have $f=\displaystyle{\sum_{l\in \mathbb{N}_L}\sum_{n\in \mathbb{Z}}\sum_{m\in \mathbb{N}_M}\langle f, E_{\frac{m}{M}}T_{nN}S^{-1}g_l\rangle E_{\frac{m}{M}}T_{nN}g_l}$.\\
\begin{enumerate}
\item Then for all $f\in \ell^2(\mathbb{S})$, we have:
$$\begin{array}{rcl}
S_{u,v}f&=&\displaystyle{\sum_{l\in \mathbb{N}_L}\sum_{n\in \mathbb{Z}}\sum_{m\in \mathbb{N}_M}\langle S_{u,v}f, E_{\frac{m}{M}}T_{nN}S^{-1}g_l\rangle E_{\frac{m}{M}}T_{nN}g_l}\\
&=& \displaystyle{\sum_{l\in \mathbb{N}_L}\sum_{n\in \mathbb{Z}}\sum_{m\in \mathbb{N}_M}\langle f, S_{v,u}E_{\frac{m}{M}}T_{nN}S^{-1}g_l\rangle E_{\frac{m}{M}}T_{nN}g_l}\\
&=&  \displaystyle{\sum_{l\in \mathbb{N}_L}\sum_{n\in \mathbb{Z}}\sum_{m\in \mathbb{N}_M}\langle f, E_{\frac{m}{M}}T_{nN}S_{v,u}S^{-1}g_l\rangle E_{\frac{m}{M}}T_{nN}g_l}\;\;\; (\text{ lemma } \ref{lem2}).
\end{array}$$
Hence $\mathcal{G}(g,L,M,N)$ is a M-D-G $S_{u,v}$-frame for $\ell^2(\mathbb{S})$ and $\mathcal{G}(S_{v,u}S^{-1}g,L,M,N) $ is a $S_{u,v}$-dual frame. \\
\item In the other hand, for all $f\in \ell^2(\mathbb{S})$, we have:
$$\begin{array}{rcl}
S_{u,v}f&=&\displaystyle{\sum_{l\in \mathbb{N}_L}\sum_{n\in \mathbb{Z}}\sum_{m\in \mathbb{N}_M}\langle f, E_{\frac{m}{M}}T_{nN}S^{-1}g_l\rangle S_{u,v}E_{\frac{m}{M}}T_{nN}g_l}\\
&=&  \displaystyle{\sum_{l\in \mathbb{N}_L}\sum_{n\in \mathbb{Z}}\sum_{m\in \mathbb{N}_M}\langle f, E_{\frac{m}{M}}T_{nN}S^{-1}g_l\rangle E_{\frac{m}{M}}T_{nN}S_{u,v}g_l}\;\;\; (\text{ lemma }\ref{lem2}).\\
\end{array}$$
Hence $\mathcal{G}(S_{u,v}g,L,M,N)$ is a M-D-G $S_{u,v}$-frame for $\ell^2(\mathbb{S})$ and $\mathcal{G}(S^{-1}g,L,M,N) $ is a $S_{u,v}$-dual frame.
\end{enumerate}
\end{proof}
The following result shows the equivalence between unique  $K$-dual frames and $K$-minimal frames. 
\begin{theorem}
Let $K\in B(H)$ and $\{f_i\}_{i\in \mathcal{I}}$ be a $K$-frame with synthesis operator $U_f$. Then the following statements are equivalent:
\begin{enumerate}
\item $\{f_i\}_{i\in \mathcal{I}}$ is unique $K$-dual.
\item There exists a unique $L\in B(H,\ell^2(\mathcal{I})\,)$ such that $K=U_fL$.
\item $U_f$ is injective.
\item $\{f_i\}_{i\in \mathcal{I}}$ is $K$-minimal.
\end{enumerate}
\end{theorem}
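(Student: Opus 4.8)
The plan is to prove the chain of equivalences $(1)\Rightarrow(2)\Rightarrow(3)\Rightarrow(4)\Rightarrow(1)$, using Theorem~\ref{teta} (the bijection $\theta$ between Bessel sequences and $B(H,\ell^2(\mathcal{I}))$), Lemma~\ref{Gavr2} (the $K$-dual characterization), and Proposition~\ref{kframe1} (the parametrization of $K$-duals by operators $L$ with $K=U_fL$). First I would establish $(3)\Leftrightarrow(4)$, which is purely definitional: $K$-minimality means by definition that the synthesis operator of $\{f_i\}_{i\in\mathcal{I}}$ is injective, so there is nothing to do there beyond citing the definition.

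Next I would prove $(2)\Leftrightarrow(3)$. Since $\{f_i\}$ is a $K$-frame, Proposition~\ref{kframe1} guarantees the set $\{L\in B(H,\ell^2(\mathcal{I})):K=U_fL\}$ is nonempty; pick one such $L_0$. If $U_f$ is injective then $U_fL=U_fL_0$ forces $L=L_0$, giving uniqueness. Conversely, if $U_f$ is \emph{not} injective, choose $0\neq v\in N(U_f)$ and a fixed index $i_0$; define $L:=L_0 + (\,\cdot\,)\mapsto \langle \cdot, w\rangle\, e_{i_0}$ for a suitable $w$ — more carefully, take any nonzero $R\in B(H,\ell^2(\mathcal{I}))$ whose range lies in $N(U_f)$ (such $R$ exists: fix $0\neq v\in N(U_f)$ and set $Rx:=\langle x,x_0\rangle v$ for any $x_0\neq 0$), so that $U_f(L_0+R)=U_fL_0=K$ while $L_0+R\neq L_0$, contradicting uniqueness. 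Hence $(2)$ fails, establishing the contrapositive.

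Then I would prove $(1)\Leftrightarrow(2)$ by transporting the previous step through the bijection $\theta$ of Theorem~\ref{teta}. By Proposition~\ref{kframe1}, the map $L\mapsto \theta^{-1}L=\{L^*(e_i)\}_{i\in\mathcal{I}}$ is a bijection from $\{L\in B(H,\ell^2(\mathcal{I})):K=U_fL\}$ onto the set of Bessel $K$-dual frames of $\{f_i\}$ (this is exactly the cardinality statement recorded in Proposition~\ref{kframe1}). A bijection carries singletons to singletons, so there is a unique $K$-dual iff there is a unique such $L$. Chaining the equivalences closes the loop.

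The only real subtlety — hence the step I would write out most carefully — is the construction in $(2)\Rightarrow(3)$ (contrapositive): producing, from a nontrivial kernel vector of $U_f$, a genuinely different operator $L'$ still satisfying $K=U_fL'$, and checking $L'\in B(H,\ell^2(\mathcal{I}))$ and $L'\neq L_0$. Everything else is bookkeeping with the already-established dictionary between Bessel sequences and bounded operators, together with the standing assumption that $\{f_i\}_{i\in\mathcal{I}}$ is a $K$-frame (so that at least one admissible $L$ exists and Lemma~\ref{Gavr2}/Proposition~\ref{kframe1} apply).
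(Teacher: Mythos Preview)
Your proof is correct and structurally parallel to the paper's: both establish $(1)\Leftrightarrow(2)$ via Proposition~\ref{kframe1}, $(3)\Leftrightarrow(4)$ by definition, and $(3)\Rightarrow(2)$ by injectivity. The one place where the arguments genuinely diverge is the contrapositive of $(2)\Rightarrow(3)$. You work at the operator level: given $0\neq v\in N(U_f)$, you build a rank-one $R$ with range in $N(U_f)$ and observe $U_f(L_0+R)=U_fL_0=K$ while $L_0+R\neq L_0$. The paper instead works at the sequence level: from $U_f(x_0)=0$ it extracts a dependence relation $f_j=\sum_{i\neq j}\alpha_i f_i$ and then hand-builds a second $K$-dual sequence $\{g_i\}$ by redistributing the $j$-th coefficient onto the others, checking directly that $\{g_i\}$ is Bessel and differs from the original dual. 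Your route is shorter and more conceptual---it never leaves $B(H,\ell^2(\mathcal{I}))$ and avoids the explicit coefficient bookkeeping---while the paper's route has the minor advantage of exhibiting the competing $K$-dual sequence explicitly. Either way the argument goes through, and your identification of this step as ``the only real subtlety'' is accurate.
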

\begin{proof}\hspace{1cm}
\begin{enumerate}
\item[]$(1)\Longleftrightarrow(2):$ It is a direct consequence of proposition \ref{kframe1}.
\item[]$(2)\Longrightarrow (3):$ Assume $(2)$, then $\{f_i\}_{i\in \mathcal{I}}$ has a unique $K$-dual $\{L^*(e_i)\}_{i\in \mathcal{I}}$. We can suppose that $L^*(e_i)\neq 0$ for all $i\in \mathcal{I}$. Suppose, by contradiction, that $U_f$ is not injective, then there exists $x_0\neq 0$ such that $U_f(x_0)=0$, i.e. $\displaystyle{\sum_{i\in \mathcal{I}}\langle x_0, e_i\rangle f_i}=0$. Let $j\in \mathcal{I}$ such that $\langle x_0, e_j\rangle\neq 0$. Then $f_j=\displaystyle{\sum_{i\neq j}\alpha_i\,f_i}$ where $\alpha_i=\displaystyle{\frac{\langle x_0, e_i\rangle }{\langle x_0, e_j\rangle}}$ for all $i\neq j$. It is clear that $\{\alpha_i\}_{i\in \mathcal{I}-\{j\}}\in \ell^2(\mathcal{I}-\{j\})$.

We have  for all $x\in H$, $$\begin{array}{rcl}

 Kx&=&\displaystyle{\sum_{i\in \mathcal{I}}\langle x,L^*(e_i)\rangle f_i}\\
 &=& \displaystyle{\sum_{i\neq j}\langle x,L^*(e_i)\rangle f_i}+\langle x, L^*(e_j)\rangle f_j\\
 &=& \displaystyle{\sum_{i\neq j}\langle x,L^*(e_i)\rangle f_i}+\langle x, L^*(e_j)\rangle \displaystyle{\sum_{i\neq j}\alpha_if_i}\\
 &=&\displaystyle{\sum_{i\neq j}\langle x, L^*(e_i+\alpha_i\,e_j)\rangle f_i}.\\
\end{array}$$
Since $\{\alpha_i\}_{i\in \mathcal{I}-\{j\}}\in \ell^2(\mathcal{I}-\{j\})$, we show, easily, that $\{g_i\}_{i\in \mathcal{I}}$ is  a Bessel sequence for $H$, where $g_i=L^*(e_i+\alpha_i\,e_j)$ for $i\neq j$ and $g_j=0$.  Then $\{g_i\}_{i\in \mathcal{I}}$ is another $K$-dual frame to $\{f_i\}_{i\in \mathcal{I}}$ than $\{L^*(e_i)\}_{i\in \mathcal{I}}$ since $L^*(e_j)\neq 0=g_j$. Contradiction.
\item[]$(3)\Longrightarrow (2):$ If $U_f$ is injective, then $L$ is, clearly, unique.
\item[]$(3)\Longleftrightarrow(4)$: By definition of $K$-minimal frames.
\end{enumerate}
\end{proof}

\begin{remark}
The $K$-minimality ( unicity of $K$-dual frame) of a $K$-frame does not depend on the operator $K$. i.e. Let $\{x_i\}_{i\in \mathcal{I}}$ be a $K$-frame and also a $M$-frame for $H$, where $K,M\in B(H)$. If $\{x_i\}_{i\in \mathcal{I}}$  is $K$-minimal, then it is also $M$-minimal.
\end{remark}

The following example is an example of $K$-minimal (unique $K$-dual) M-D-G frames.
\begin{example}
Let $L\geqslant 2$ be an integer and $M,N\in \mathbb{N}$ such that $N=LM$. Let $\mathcal{G}(g,L,M,N)$ be a M-D-G orthonormal basis for $\ell^2(\mathbb{Z})$ (the existence is proved in Theorem \ref{exortho}). Let $K\in B(\ell^2(\mathbb{Z})\,)$ defined as follows: For all $m\in \mathbb{N}_M, n\in \mathbb{Z}$ and $l\in \mathbb{N}_{L-1}$, $K(g_{m,n,l})=g_{m,n,l}$ and $K(g_{m,n,L-1})= 0.$ 
We have for all $f\in \ell^2(\mathbb{Z})$, 
$$\begin{array}{rcl}
\|K^*(f)\|^2&=&\displaystyle{\sum_{l\in \mathbb{N}_L}\sum_{n\in \mathbb{Z}}\sum_{m\in \mathbb{N}_M}\vert\langle K^*(f),g_{m,n,l}\rangle\vert^2}\\
&=& \displaystyle{\sum_{l\in \mathbb{N}_L}\sum_{n\in \mathbb{Z}}\sum_{m\in \mathbb{N}_M}\vert\langle f,K(g_{m,n,l})\rangle\vert^2}.
\end{array}$$
Hence $\mathcal{G}(\, \{g_l\}_{l\in \mathbb{N}_{L-1}}, M,N)$ is a  M-D-G  Parseval $K$-frame. Moreover its synthesis operator is injective. Then $\mathcal{G}(\, \{g_l\}_{l\in \mathbb{N}_{L-1}}, M,N)$ is a $K$-minimal (unique $K$-dual) M-D-G $K$-frame for $\ell^2(\mathbb{Z})$.
\end{example}
\begin{remark}
We have seen in Theorem \ref{Rieszbasis} that for M-D-G ordinary frames, a M-D-G frame $\mathcal{G}(g,L,M,N)$ for $\ell^2(\mathbb{S})$ is minimal (Riesz basis/ exact) if and only if $card(\mathbb{S}_N)=LM$. Observe in the above example that this condition does not hold for M-D-G $K$-frames. In fact, 
since  $\mathcal{G}(g,L,M,N)$ is a M-D-G orthonormal basis for $\ell^2(\mathbb{Z})$, then $N=LM$. Thus,  $N>(L-1)M$ and $\mathcal{G}(\, \{g_l\}_{l\in \mathbb{N}_{L-1}}, M,N)$ is a $K$-minimal (unique $K$-dual) M-D-G $K$-frame for $\ell^2(\mathbb{Z})$.
\end{remark}
	\medskip
\section*{Acknowledgments}
	It is my great pleasure to thank the referee for his careful reading of the paper and for several helpful suggestions.	
	
	\section*{Declarations}
	
	\medskip
	
	\noindent \textbf{Availablity of data and materials}\newline
	\noindent Not applicable.
	
	\medskip
	
	\noindent \textbf{Competing  interest}\newline
	\noindent The author declares that he has no competing interests.
	
	\medskip
	
	\noindent \textbf{Fundings}\newline
	\noindent  The author declare sthat there is no funding available for this article.
	
	\medskip

\end{document}